\documentclass[12pt,reqno]{amsart}

\usepackage[T1]{fontenc}
\usepackage{lmodern}
\usepackage{microtype}
\usepackage[margin=1in]{geometry}
\usepackage{setspace}
\usepackage{amsmath,amssymb,mathtools}
\usepackage{xcolor}
\usepackage[hidelinks]{hyperref}
\allowdisplaybreaks
\newif\ifanonymous
\anonymousfalse 

\ifanonymous
  \author{Anonymous Author}
\else
  \author{Maotuo Guo}
  \address{School of Science, Harbin University of Science and Technology}
  \email{18813122370@163.com}

  \author{Wendong Wang}
  \address{School of Mathematical Sciences, Dalian University of Technology, Dalian 116024, P. R. China}
  \email{wendong@dlut.edu.cn}

  \author{Shiyang Xiong}
  \address{Institute of Mathematics, Academy of Mathematics and Systems Science, Chinese Academy of Sciences, Beijing 100190, P. R. China}
  \email{xiongshiyang@amss.ac.cn}
\fi

\title[One-Component Regularity in Critical Chemin--Lerner Spaces]{A Critical Chemin--Lerner Regularity Criterion via One Velocity Component for the Three-Dimensional Navier--Stokes Equations}

\hypersetup{
  pdftitle={A Critical Chemin--Lerner Regularity Criterion via One Velocity Component for the Three-Dimensional Navier--Stokes Equations},
  pdfauthor={Maotuo Guo, Wendong Wang, Shiyang Xiong},
  pdfsubject={A critical one-component regularity criterion for the three-dimensional Navier--Stokes equations in Chemin--Lerner spaces},
  pdfkeywords={three-dimensional Navier--Stokes equations, one-component regularity, suitable weak solutions, critical Chemin--Lerner spaces, type-I local energy, pressure decomposition}
}
\subjclass[2020]{35Q30, 35B65, 76D03, 76D05}
\keywords{Navier--Stokes equations, one-component regularity, suitable weak solutions, critical Chemin--Lerner spaces, type-I local energy, pressure decomposition}

\numberwithin{equation}{section}

\newtheorem{theorem}{Theorem}[section]
\newtheorem{proposition}[theorem]{Proposition}
\newtheorem{lemma}[theorem]{Lemma}
\newtheorem{corollary}[theorem]{Corollary}
\theoremstyle{remark}
\newtheorem{remark}[theorem]{Remark}

\newcommand{\R}{\mathbb R}
\newcommand{\Z}{\mathbb Z}
\newcommand{\dd}{\,\mathrm d}
\newcommand{\eps}{\varepsilon}
\newcommand{\Bdot}{\dot B}
\newcommand{\DeltaDot}{\dot\Delta}
\newcommand{\Sdot}{\mathcal S'_h}

\newcommand{\esssup}{\mathop{\mathrm{ess\,sup}}}
\newcommand{\norm}[2][]{\left\lVert #2\right\rVert_{#1}}
\newcommand{\abs}[1]{\left\lvert #1\right\rvert}
\newcommand{\rev}[1]{{\color{black}#1}}
\newenvironment{revision}{\par\begingroup\color{black}}{\par\endgroup}

\newcommand{\paperabstract}{%
{\color{black}
We prove a scaling-critical regularity criterion involving only one velocity component for finite-energy suitable weak solutions of the three-dimensional incompressible Navier--Stokes equations.  Let $2<p<\infty$ and $m=3p/(p-2)$, so that $2/p+3/m=1$.  We show that a singularity cannot occur provided
\[
 \sum_{j\in\mathbb Z}
 \|\dot\Delta_j u^3\|_{L^p(0,T;L^m(\mathbb R^3))}<\infty,
\]
that is, $u^3\in\widetilde L^p(0,T;\dot B^0_{m,1}(\mathbb R^3))$.  The assumption is a spatial-frequency $\ell^1$ refinement of the still unresolved critical condition $u^3\in L^p_tL^m_x$ and is complementary to the Lorentz-in-time refinement $L^{p,1}_tL^m_x$ obtained by Wang, Wu, and Zhang.  By Bernstein embedding, the result extends to $u^3\in\widetilde L^p_t\dot B^s_{q,1}$ on the nonnegative-smoothness critical line $s=-1+2/p+3/q\ge0$.

The principal innovation is a frequency--scale matching scheme
embedded in the local energy inequality. Each dyadic block of
$u^3$ is retained until it is paired with the vertical scale
selected by a one-dimensional backward heat kernel. Low
frequencies gain from the slab thickness, high frequencies from
transferring the projection to the localized flux and applying
an inverse Bernstein estimate, and spatially separated pressure
sources from harmonic decay. These mechanisms generate a
two-sided $\ell^1$ kernel, converting spatial-frequency
summability into summability of physical-scale energy increments.
Consequently, we obtain a uniform Type-I local energy bound
without a Lorentz refinement in time; compactness and
one-component rigidity then exclude singular blow-up limits
whose third velocity component vanishes.

}
}
\begin{document}
\raggedbottom

\begin{abstract}\paperabstract\end{abstract}
\maketitle
\section{Introduction and main results}\label{sec:intro}

Consider the three-dimensional incompressible Navier--Stokes equations as follows:
\begin{equation}\label{eq:NS}
 \partial_tu+u\cdot\nabla u-\Delta u+\nabla\pi=0,
 \qquad \nabla\cdot u=0
 \quad\text{in }\mathbb R^3\times(0,T),
\end{equation}
where $u=(u^1,u^2,u^3)$ is the velocity and $\pi$ is the pressure.  Leray and Hopf constructed global finite-energy weak solutions, but their regularity and uniqueness in three dimensions remain open \cite{Leray1934,Hopf1951}.  The classical Prodi--Serrin theory asserts regularity when the full velocity satisfies
\begin{equation}\label{eq:LPS-intro}
 u\in L^p(0,T;L^m(\mathbb R^3)),
 \qquad \frac2p+\frac3m\le1,
 \qquad 3<m\le\infty,
\end{equation}
with the limiting space $L^\infty_tL^3_x$ treated by Escauriaza, Seregin, and \v{S}ver\'ak \cite{Prodi1959,Serrin1962,Giga1986,ESS2003}.  Critical-space continuation criteria have since been developed through mild-solution theory, backward uniqueness, profile decompositions, and critical Besov methods; see, among others, \cite{Kato1984,KochTataru2001,KenigKoch2011,Seregin2012,GallagherKochPlanchon2013,Albritton2018}.

A second line of research concerns local regularity of suitable weak solutions.  Scheffer \cite{Scheffer1976} initiated the partial-regularity theory, and Caffarelli, Kohn, and Nirenberg proved that the one-dimensional parabolic Hausdorff measure of the singular set is zero \cite{CKN1982}.  Subsequent work clarified the role of scale-invariant local energy, velocity, and pressure quantities and provided flexible $\varepsilon$-regularity criteria \cite{Lin1998,LadyzhenskayaSeregin1999,Seregin2007Morrey,Seregin2007Local,GustafsonKangTsai2007,Vasseur2007,WangZhang2014,Wolf2015,JiuWangZhou2019}.  Local pressure decompositions are particularly important when the available information is componentwise or anisotropic \cite{Wolf2017}.
Regularity criteria involving only part of the velocity field are substantially more delicate than \eqref{eq:LPS-intro}.  Controlling $u^3$ does not directly control the horizontal velocity $u^h=(u^1,u^2)$, while the pressure remains coupled to all components through
\[
 -\Delta\pi=\sum_{i,j=1}^3\partial_i\partial_j(u^iu^j).
\]
Early one-component criteria were obtained in subcritical mixed-norm regimes by Neustupa and Penel, Neustupa--Novotn\'y--Penel, Kukavica and Ziane, Cao and Titi, and Zhou and Pokorn\'y \cite{NeustupaPenel1999,NeustupaNovotnyPenel2002,KukavicaZiane2006,CaoTiti2008,ZhouPokorny2010,Neustupa2018}.  Closely related criteria involving one directional derivative, one vorticity component, or one entry of the velocity-gradient tensor were developed in \cite{BeiraoDaVeiga1995,ChaeChoe1999,PenelPokorny2004,KukavicaZiane2007,ZhouPokorny2009,CaoTiti2011,FangQian2013}.

\begin{revision}
The distinction between one- and two-component information is already visible in local suitable-solution theory.  Wang,  Zhang, and  Zhang obtained interior criteria involving the two horizontal components, including anisotropic scaled norms on cylinders of the form $B_r^2\times\mathbb R\times(-r^2,0)$ in the range $1\le3/m+2/p\le2$ \cite{WangZhangZhang2018}.  Their argument exploits the fact that a two-dimensional backward heat kernel converts the principal convection flux into a term containing $u^h$.  The present paper addresses a different problem: only one component is assumed regular, the hypothesis is globally critical rather than supercritical Morrey smallness, and the missing decay is recovered by retaining the spatial frequency until it is paired with the localized flux.
\end{revision}
Scaling-critical one-component criteria were first established in continuation frameworks for strong or mild solutions.  Chemin and Zhang introduced an anisotropic vorticity method and proved a critical condition involving $u\cdot e$ in $L^p_t\dot H^{1/2+2/p}_x$ for $4<p<6$ \cite{CheminZhang2016}.  Chemin, Zhang, and Zhang extended the range to $4<p<\infty$, and Han, Lei, Li, and Zhao completed the range $2\le p<\infty$ in the corresponding mild-solution setting \cite{CheminZhangZhang2017,HanLeiLiZhao2019}.  Related borderline criteria and a time-dependent distinguished direction were studied in \cite{BaeKang2019,Skalak2019,LiuZhang2024}.  These results are genuinely critical and frequency sensitive, but they are formulated for strong or mild solutions and use the anisotropic vorticity structure and the initial-data hypotheses specific to those works.

For suitable weak solutions, Chae and Wolf proved the strict Serrin-range criterion
\[
 u^3\in L^p(0,T;L^m(\mathbb R^3)),
 \qquad \frac2p+\frac3m<1,
\]
and obtained a logarithmically improved borderline condition \cite{ChaeWolf2021}.  Wang, Wu, and Z. Zhang subsequently reached the scaling line by strengthening the time summability to the Lorentz space $L^{p,1}$ \cite{WangWuZhang2024}.  Other local results combine smallness of one component with an a priori scale-invariant bound for the full solution \cite{KukavicaRusinZiane2017,KangNguyen2023,Yu2026}.  Nevertheless, the natural endpoint assertion
\begin{equation}\label{eq:open-strong-time}
 u^3\in L^p(0,T;L^m(\mathbb R^3)),
 \qquad \frac2p+\frac3m=1,
 \qquad 3<m<\infty,
\end{equation}
remains open for general suitable weak solutions.

\begin{revision}
Our purpose is to isolate a spatial-frequency refinement of \eqref{eq:open-strong-time} that is compatible with the local energy inequality.  Let $(\dot\Delta_j)_{j\in\mathbb Z}$ be a homogeneous Littlewood--Paley decomposition and define
\begin{equation}\label{eq:CLnorm-intro}
 \|f\|_{\widetilde L^p(0,T;\dot B^s_{q,1})}
 :=\sum_{j\in\mathbb Z}2^{js}
 \|\dot\Delta_jf\|_{L^p(0,T;L^q(\mathbb R^3))}.
\end{equation}
Thus the time norm is taken before the dyadic $\ell^1$ summation.  At $s=0$ and $2/p+3/m=1$, this norm is invariant under the Navier--Stokes scaling.  It implies the ordinary critical bound $L^p_tL^m_x$, but the extra information is spatial rather than temporal.  This is the structural feature that permits the frequency--observation-scale summation below.
\end{revision}


Our main result is stated as follows.
\begin{theorem}[Zero-smoothness one-component criterion]\label{thm:main}
Let $2<p<\infty$ and
\begin{equation}\label{eq:endpoint-exponents-intro}
 m:=\frac{3p}{p-2}>3,
 \qquad \frac2p+\frac3m=1.
\end{equation}
Let $(u,\pi)$ be a suitable weak solution of \eqref{eq:NS} in $\mathbb R^3\times(0,T)$ such that
\[
 u\in L^\infty(0,T;L^2(\mathbb R^3))\cap L^2(0,T;\dot H^1(\mathbb R^3)),
 \qquad \pi\in L^{3/2}_{\mathrm{loc}}(\mathbb R^3\times(0,T)).
\]
Assume that
\begin{equation}\label{eq:main-assumption}
 u^3\in\widetilde L^p\bigl(0,T;\dot B^0_{m,1}(\mathbb R^3)\bigr),
 \quad\text{equivalently}\quad
 \sum_{j\in\mathbb Z}
 \|\dot\Delta_ju^3\|_{L^p(0,T;L^m)}<\infty.
\end{equation}
Then every $z_0=(x_0,t_0)\in\mathbb R^3\times(0,T]$ is regular.  In particular,
\[
 u\in C^\infty_{\mathrm{loc}}(\mathbb R^3\times(0,T]).
\]
\end{theorem}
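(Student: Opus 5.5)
We argue by contradiction through the blow-up scheme suggested in the abstract: a uniform Type-I local energy bound, then compactness, then a Liouville-type rigidity for limits with vanishing third component. Suppose $z_0=(x_0,t_0)$ is singular. For $r\le r_0$ set
\[
E(r):=\sup_{t_0-r^2<t<t_0}\frac1r\int_{B_r(x_0)}|u|^2\dd x+\frac1r\int_{t_0-r^2}^{t_0}\!\!\int_{B_r(x_0)}|\nabla u|^2\dd x\dd t .
\]
The first and main goal is $\sup_{0<r\le r_0}E(r)<\infty$.

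To prove the bound, we test the local energy inequality with $\phi=\chi\,\Gamma$. Here $\Gamma$ is a one-dimensional backward heat kernel in $x_3$ at a scale $\rho$ and $\chi$ is a horizontal cutoff. This follows the two-dimensional kernel idea of Wang--Zhang--Zhang, adapted to the vertical direction so that the principal flux term $\int |u|^2 u^3\partial_3\phi$ carries a factor $u^3$.

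We split the pressure into a local part and a harmonic far part. The local part is $\pi_{\rm loc}=\sum\mathcal R_i\mathcal R_j(\chi u^iu^j)$. By divergence-freeness we write $\partial_iu^i=0$ in the form $\partial_3u^3=-\nabla_h\cdot u^h$, so that every term of $\pi_{\rm loc}\,u\cdot\nabla\phi$ that pairs with $\partial_3\phi$ contains $u^3$. The far part is controlled by harmonic decay.

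We then insert $u^3=\sum_j\dot\Delta_ju^3$ and estimate each dyadic block against the physical scale $\rho=2^{-k}$.
\begin{itemize}
\item \emph{Low frequencies} ($2^j\rho\lesssim1$): $\dot\Delta_ju^3$ is essentially constant on the slab of thickness $\rho$. Hölder's inequality with the thin slab gives a factor $(2^j\rho)^{\alpha}$ for some $\alpha>0$.
\item \emph{High frequencies} ($2^j\rho\gtrsim1$): we move the Littlewood--Paley projection onto the localized flux $|u|^2\partial_3\phi$. The inverse Bernstein estimate then gives a factor $(2^j\rho)^{-\beta}$, at the cost of $\nabla u$, which is absorbed by the energy.
\end{itemize}
The result is a recursive inequality of the form
\[
E(\rho_{k+1})\le \tfrac12E(\rho_k)+C\sum_j K(j-k)\,a_j\,(1+E(\rho_k)),
\]
where $a_j=\|\dot\Delta_ju^3\|_{L^p(t_0-\rho_k^2,t_0;L^m)}$ and $K(n)=2^{-c|n|}$ is a two-sided $\ell^1$ kernel. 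Young's inequality gives $\sum_kK*a\lesssim\|a\|_{\ell^1}$, which is finite by \eqref{eq:main-assumption}. Local tails of $\|a\|_{\ell^1}$ become small as $t_0-\rho^2\to t_0$ by dominated convergence in each block together with the $\ell^1$ tail. A discrete Gronwall argument then yields the uniform bound on $E$.

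For the second stage, we rescale $u_k(x,t)=r_ku(x_0+r_kx,t_0+r_k^2t)$. The uniform Type-I bound together with the pressure decomposition gives compactness of suitable weak solutions. The CKN $\varepsilon$-regularity criterion passes singularity to the limit, so we obtain a nontrivial singular ancient suitable solution $v$ with bounded local energy. Since the hypothesis is scale invariant and its tail tends to zero, $\|\widetilde{u^3_k}\|\to0$, and hence $v^3\equiv0$. Then $v^h$ solves two-dimensional Navier--Stokes in $x_h$ with $\partial_3 v^h$ satisfying a linear equation; moreover $\nabla_h\cdot v^h=0$ and the pressure is two-dimensional in structure. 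We then invoke the one-component rigidity, that is, regularity of Type-I solutions with $v^3=0$ via the 2D vorticity being a bounded-energy scalar, to reach a contradiction with the singularity. Smoothness of $u$ follows by standard bootstrapping.

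The main obstacle is the high-frequency/pressure step of the first stage. Transferring $\dot\Delta_j$ onto the localized nonlinear flux $\pi_{\rm loc}\,u\cdot\nabla\phi$ must produce a decaying factor uniformly in $E$, and the commutator with the cutoff must also decay. We would first try to put the kernel $\Gamma$ only in $x_3$ and use $\|\partial_3^N\Gamma\|\sim\rho^{-N}$, so that each commutator costs $(2^j\rho)^{-1}$.
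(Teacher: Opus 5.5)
Your overall architecture matches the paper's: a vertical backward heat-kernel test, pairing each block $\dot\Delta_ju^3$ with the observation scale (slab gain at low frequency, inverse Bernstein at high frequency), then compactness and a limit with vanishing third component. However, the first stage does not close as you set it up. The obstruction is the horizontal cutoff $\chi(x_h)$ together with the ball energies $E(r)$.

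With $\phi=\chi\Gamma$ one has $u\cdot\nabla\phi=\chi u^3\partial_3\Gamma+\Gamma\,u^h\cdot\nabla_h\chi$. So the local energy inequality contains $\int(|u|^2+2\pi)\,\Gamma\,u^h\cdot\nabla_h\chi$, which carries no factor $u^3$; rewriting $\partial_3u^3=-\nabla_h\cdot u^h$ does not affect it. Since $\nabla_h\chi$ is supported on a horizontal annulus crossing $\{x_3=0\}$, where $\Gamma\approx\rho^{-1}$ for $-t\lesssim\rho^2$, this term has no smallness. It is not bounded uniformly in $\rho$ by the energy: it needs scaled $|u|^3$ bounds on thin slabs through the annulus, which is the kind of estimate you are trying to prove, and it enters superlinearly.

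Moreover, $\Gamma$ does not localize horizontally. Every flux at observation scale $\rho_k$ is therefore an integral over the whole slab $\{|x_3|\lesssim\rho_k\}\cap\operatorname{supp}\chi$, and quantities on balls centred at $x_0$ give no control of $u$ on that slab away from $x_0$. So no inequality relating $E(\rho_{k+1})$ to $E(\rho_k)$ can come out of this test, and nothing in your plan produces the contraction factor $\tfrac12$.

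The missing idea is the paper's, in three parts:
\begin{enumerate}
\item The test function $\Phi_n\eta$ depends only on $(x_3,t)$.
\item It is admissible because the solution has finite global energy: a horizontal cutoff $\psi_R$ is removed as $R\to\infty$ at fixed $n$ (Lemma~\ref{lem:endpoint-horizontal-cutoff}).
\item The recursion is run for the horizontally unbounded slab energies $e_k=r_k^{-1}E_k$ on $S_k=\mathbb R^2\times(-2r_k,2r_k)$, each finite since $E_k\le E_{\mathrm g}$.
\end{enumerate}
Because the outer cutoff $\eta=\eta_0$ stays at scale one, the linear terms are $O(1)$. One then gets the all-scale inequality $e_n\le C_0+C_1\sum_{j\le n}\lambda_je_j$ with $\lambda\in\ell^1$. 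It closes by choosing $N$ with $C_1\sum_{j\ge N}\lambda_j\le\frac12$ and taking a maximum over $N\le n\le M$. Ball quantities appear only at the end, via $A(r)+E(r)\le2e_n$. Your proposal never uses the global energy hypothesis, and this is exactly where it is indispensable.

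The pressure also needs more than one local/far split. To transfer $\dot\Delta_j$ at observation scale $k$, you need the pressure on the scale-$k$ slab to satisfy $\|H_k\|_{L^{p'}L^{m'}}\lesssim r_kE_{k-c}$ and $\|\nabla H_k\|_{L^{p'}L^{m'}}\lesssim E_{k-c}$. A local part with sources at a fixed scale only gives $r_kE_{\mathrm g}$, hence a contribution $B_kr_k^{-1}E_{\mathrm g}$ that is not summable in $k$. A single far part estimated through the global source norm has the same defect. The paper therefore writes, for each $k$, $\pi=\Pi_k+\sum_{j=0}^{k-L-1}P_{n,j}$. Here $\Pi_k$ has sources cut off at scale $r_{k-L}$, and each vertical source shell $P_{n,j}$ is harmonic near $S_{k-2}$. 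A covering argument on the unbounded slab (using $q_2<m'$) bounds each shell's contribution by $2^{-\gamma(k-j)}e_{j-c_*}$, in terms of the shell's own energy; this is the additional kernel $\widehat B_j$.

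Three smaller remarks:
\begin{itemize}
\item No commutator is needed at high frequency. The cutoff can be placed inside $F=\chi^{\mathrm{buf}}_kH_kW_{n,k}$, and then $\|\widetilde\Delta_jF\|_{L^{m'}}\lesssim2^{-j}\|\nabla F\|_{L^{m'}}$.
\item In the rigidity step, $v^h$ still depends on $x_3$, so it does not solve two-dimensional Navier--Stokes. Regularity of a suitable solution with $v^3\equiv0$ nonetheless follows directly from a subcritical one-component criterion (the paper cites Chae--Wolf).
\item Keeping the singularity in the limit is best done with a pressure-free $\varepsilon$-criterion, because the rescaled pressures converge only weakly.
\end{itemize}
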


A terminal point $(x_0,T)$ is called regular if $u$ is essentially bounded in $B_r(x_0)\times(T-r^2,T)$ for some $r>0$.  Standard parabolic regularity then gives smoothness in every smaller backward cylinder.

\begin{revision}
\begin{remark}[A new critical one-component theory]\label{rem:comparison}
Theorem~\ref{thm:main} should be viewed as complementary to, rather than as a direct strengthening of, the known critical criteria.
\begin{enumerate}
\item Wang, Wu, and Zhang assume $u^3\in L^{p,1}(0,T;L^m)$ on the same Serrin scaling line \cite{WangWuZhang2024}.  Their hypothesis strengthens summability in time and uses the atomic decomposition of $L^{p,1}$ to obtain an $\ell^1$ sequence over physical scales.  Our hypothesis retains the strong $L^p$ time norm and instead imposes $\ell^1$ summability over spatial frequencies.  Both spaces embed into the unresolved class $L^p_tL^m_x$, but neither critical refinement contains the other in general.  At the level of scalar tensor products, $g(t)\phi(x)$ with $g\in L^p\setminus L^{p,1}$ and $\phi\in L^2\cap\dot B^0_{m,1}$ satisfies the present condition but not the Lorentz-time condition; conversely, $h(t)\psi(x)$ with $h\in L^{p,1}$ and $\psi\in L^2\cap L^m\setminus\dot B^0_{m,1}$ has the opposite behavior.  These examples only demonstrate non-comparability of the underlying function spaces; they are not asserted to be Navier--Stokes solutions.
\item The works of Chemin--Zhang, Chemin--Zhang--Zhang, and Han--Lei--Li--Zhao establish critical continuation criteria in the $L^2$-based spaces $L^p_t\dot H^{1/2+2/p}_x$ for strong or mild solutions \cite{CheminZhang2016,CheminZhangZhang2017,HanLeiLiZhao2019}.  Their mechanism is based on anisotropic vorticity estimates.  The present criterion is instead formulated directly for suitable weak solutions, uses the zero-smoothness $L^m$-based Besov space, and does not assume an auxiliary vorticity norm.  The function spaces and the solution frameworks are not ordered, so the results are best regarded as distinct critical realizations of the one-component principle.  Liu and Zhang further allow the controlled direction to vary with time in a local strong-solution framework \cite{LiuZhang2024}; here the direction is fixed, but the conclusion applies to suitable weak solutions.
\item The two-component result of Wang--Zhang--Zhang permits a wider, even supercritical, range of scaled norms on anisotropic cylinders, but requires information on $u^h$ and vanishing of an anisotropic scaled norm across physical scales \cite{WangZhangZhang2018}.  Theorem~\ref{thm:main} uses only $u^3$ and works at the global critical scale, with spatial frequency summability replacing two-component control.
\end{enumerate}
\end{remark}
\end{revision}

The nonnegative-smoothness critical line follows by Bernstein embedding.

\begin{lemma}[Critical-line embedding]\label{lem:critical-line-embedding}
Let $1\le p\le\infty$, $1\le q\le m<\infty$, and $s=3/q-3/m$.  Then
\begin{equation}\label{eq:critical-line-embedding}
 \|f\|_{\widetilde L^p(I;\dot B^0_{m,1})}
 \le C\|f\|_{\widetilde L^p(I;\dot B^s_{q,1})}
\end{equation}
for every time interval $I$.
\end{lemma}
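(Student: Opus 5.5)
The plan is to argue blockwise. We apply the classical Bernstein inequality to each dyadic piece at fixed time, and then take the $L^p$ norm in time before summing over $j$. This order is exactly the one built into the Chemin--Lerner norm \eqref{eq:CLnorm-intro}. The Bernstein inequality we need is the following. If the Fourier transform of $g$ is supported in an annulus $\{|\xi|\sim 2^j\}$, or more generally in a ball of radius $C2^j$, and $1\le q\le m\le\infty$, then
\[
 \|g\|_{L^m(\mathbb R^3)}\le C\,2^{3j(1/q-1/m)}\|g\|_{L^q(\mathbb R^3)},
\]
with $C$ independent of $j$.

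To prove it, fix a Schwartz function $\tilde\varphi$ whose Fourier transform equals $1$ on the annulus supporting $\widehat{\dot\Delta_0 f}$. Write $\dot\Delta_j f=\tilde\varphi_j*\dot\Delta_j f$, where $\tilde\varphi_j(x)=2^{3j}\tilde\varphi(2^jx)$. Young's inequality with $1+1/m=1/r+1/q$ then gives
\[
 \|\dot\Delta_jf\|_{L^m}\le\|\tilde\varphi_j\|_{L^r}\|\dot\Delta_jf\|_{L^q}.
\]
Scaling gives $\|\tilde\varphi_j\|_{L^r}=2^{3j(1-1/r)}\|\tilde\varphi\|_{L^r}=2^{3j(1/q-1/m)}\|\tilde\varphi\|_{L^r}$. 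The condition $q\le m$ guarantees $1\le r\le\infty$. This is the only place the hypothesis $q\le m$ is used.

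Applying the inequality to $g=\dot\Delta_jf(t)$ for almost every $t\in I$, with $s=3/q-3/m$, we obtain
\[
 \|\dot\Delta_jf(t)\|_{L^m}\le C2^{js}\|\dot\Delta_jf(t)\|_{L^q}.
\]
We take the $L^p(I)$ norm in $t$, which is monotone, and the constant does not depend on $t$ or $I$. This yields
\[
 \|\dot\Delta_jf\|_{L^p(I;L^m)}\le C2^{js}\|\dot\Delta_jf\|_{L^p(I;L^q)}.
\]
Summing over $j\in\mathbb Z$ gives \eqref{eq:critical-line-embedding}, with $C$ depending only on $q$, $m$ and the Littlewood--Paley cutoff, and in particular uniform in $I$ and $p$.

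The only point needing care is measurability of $t\mapsto\dot\Delta_jf(t)$ in $L^m$. If the right-hand side is finite, each $\dot\Delta_jf\in L^p(I;L^q)$, and since convolution with $\tilde\varphi_j$ is a bounded map from $L^q$ to $L^m$, we get $\dot\Delta_jf\in L^p(I;L^m)$ as a Bochner-measurable function. No step here is a genuine obstacle. The one thing to emphasize is that the time norm is taken before the $\ell^1$ sum. That ordering is why the fixed-time Bernstein constant passes through unchanged, with no Minkowski or interpolation loss between time and frequency.
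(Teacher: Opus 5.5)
Your proposal is correct and is essentially the paper's proof: Bernstein's inequality on each block at fixed time, then the $L^p(I)$ norm, then the sum over $j$. Your Young's-inequality derivation of the Bernstein constant $2^{3j(1/q-1/m)}$ and the measurability remark are details the paper simply takes as standard.
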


\begin{proof}
Bernstein's inequality gives
\[
 \|\dot\Delta_jf\|_{L^m(\mathbb R^3)}
 \le C2^{3j(1/q-1/m)}\|\dot\Delta_jf\|_{L^q(\mathbb R^3)}
 =C2^{js}\|\dot\Delta_jf\|_{L^q(\mathbb R^3)}.
\]
Taking the $L^p$ norm in time and summing over $j$ proves \eqref{eq:critical-line-embedding}.
\end{proof}

\begin{corollary}[Nonnegative-smoothness critical line]\label{cor:critical-line}
Let $2<p<\infty$, $m=3p/(p-2)$, \rev{$1\le q\le m$}, and
\begin{equation}\label{eq:m-s}
 s:=-1+\frac2p+\frac3q=\frac3q-\frac3m\ge0.
\end{equation}
Under the solution assumptions of Theorem~\ref{thm:main}, if
\begin{equation}\label{eq:critical-line-assumption}
 u^3\in\widetilde L^p\bigl(0,T;\dot B^s_{q,1}(\mathbb R^3)\bigr),
\end{equation}
then the conclusion of Theorem~\ref{thm:main} holds.
\end{corollary}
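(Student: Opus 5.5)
The corollary is a direct reduction to Theorem~\ref{thm:main} through Lemma~\ref{lem:critical-line-embedding}. No new analysis is needed; the plan is to check that the exponents fit the lemma and then invoke the theorem.

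First, check the parameters. We have $2<p<\infty$ and $m=3p/(p-2)\in(3,\infty)$, so $m<\infty$ as the lemma requires. The hypothesis $1\le q\le m$ is exactly the range in Lemma~\ref{lem:critical-line-embedding}. The identity in \eqref{eq:m-s} follows from $2/p+3/m=1$:
\[
 -1+\frac2p+\frac3q=\frac3q-\Bigl(1-\frac2p\Bigr)=\frac3q-\frac3m .
\]
Hence $s$ coincides with the index $3/q-3/m$ of the lemma. It is nonnegative precisely because $q\le m$.

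Next, apply \eqref{eq:critical-line-embedding} with $I=(0,T)$ and $f=u^3$. This gives
\[
 \sum_{j\in\mathbb Z}\|\dot\Delta_ju^3\|_{L^p(0,T;L^m)}
 \le C\|u^3\|_{\widetilde L^p(0,T;\dot B^s_{q,1})}<\infty .
\]
This is \eqref{eq:main-assumption}. The solution assumptions (suitable weak, finite energy, $\pi\in L^{3/2}_{\mathrm{loc}}$) are carried over unchanged. Theorem~\ref{thm:main} then yields that every $z_0\in\mathbb R^3\times(0,T]$ is regular, and that $u\in C^\infty_{\mathrm{loc}}(\mathbb R^3\times(0,T])$.

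The only point needing care is that the dyadic blocks $\dot\Delta_ju^3$ make sense and the Besov sums are interpreted consistently. Since $u^3\in L^\infty_tL^2_x$, we have $u^3(t)\in\mathcal S'_h$ for a.e.\ $t$. So both norms are computed with the same homogeneous Littlewood--Paley decomposition, and Bernstein's inequality applies blockwise for a.e.\ $t$ before taking the $L^p$ norm in time. Beyond this bookkeeping there is no real obstacle; all the difficulty is contained in Theorem~\ref{thm:main}.
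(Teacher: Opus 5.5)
Your proposal is correct and follows the same route as the paper. The paper applies Lemma~\ref{lem:critical-line-embedding} on $(0,T)$ to bound $\|u^3\|_{\widetilde L^p(0,T;\dot B^0_{m,1})}$ by $\|u^3\|_{\widetilde L^p(0,T;\dot B^s_{q,1})}$ and then invokes Theorem~\ref{thm:main}; your exponent check and your remark on the homogeneous decomposition are bookkeeping the paper leaves implicit.
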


For $1\le q\le\infty$, let $L^q_\sigma(\mathbb R^3)$ denote the divergence-free subspace of $L^q(\mathbb R^3)$.  We also obtain the corresponding continuation criterion.

\begin{corollary}[Continuation of maximal mild solutions]\label{cor:mild}
Let $u_0\in L^2_\sigma(\mathbb R^3)\cap L^3_\sigma(\mathbb R^3)$ and let $u$ be the maximal $L^3$-mild solution of \eqref{eq:NS} on $[0,T^*)$.  If $T^*<\infty$, then for every $2<p<\infty$, with $m=3p/(p-2)$, every \rev{$1\le q\le m$}, and $s=-1+2/p+3/q$,
\begin{equation}\label{eq:blowup-norm}
 \|u^3\|_{\widetilde L^p(0,T^*;\dot B^s_{q,1})}=\infty.
\end{equation}
\end{corollary}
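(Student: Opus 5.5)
The plan is to argue by contradiction. Assume $T^*<\infty$ and that \eqref{eq:blowup-norm} fails for some admissible $(p,q,s)$, so that $\|u^3\|_{\widetilde L^p(0,T^*;\dot B^s_{q,1})}<\infty$. The goal is to show that $u$ extends as an $L^3$-mild solution beyond $T^*$.

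\textbf{Step 1: reduce to a suitable weak solution on $(0,T^*)$.} First I recall the standard facts about maximal $L^3$-mild solutions with data $u_0\in L^2_\sigma\cap L^3_\sigma$:
\begin{itemize}
\item on every $[0,T']$ with $T'<T^*$ the solution is smooth for $t>0$ and belongs to $L^\infty_tL^2_x\cap L^2_t\dot H^1_x$;
\item it satisfies the energy equality, so $\|u(t)\|_{L^2}^2+2\int_0^t\|\nabla u\|_{L^2}^2\le\|u_0\|_{L^2}^2$ uniformly in $t<T^*$;
\item the pressure is $\pi=\sum_{i,j}R_iR_j(u^iu^j)\in L^{5/3}_{t,x}\subset L^{3/2}_{\mathrm{loc}}$, by Calder\'on--Zygmund and the interpolation bound $u\in L^{10/3}_{t,x}$.
\end{itemize}
Hence $u\in L^\infty(0,T^*;L^2)\cap L^2(0,T^*;\dot H^1)$ and $\pi\in L^{3/2}_{\mathrm{loc}}(\mathbb R^3\times(0,T^*))$. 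Since $u$ is smooth on $(0,T^*)$, the local energy inequality holds with equality there. Closing at the endpoint, $(u,\pi)$ is a suitable weak solution on $\mathbb R^3\times(0,T^*)$: the local energy inequality passes to $t\to T^*$ by the uniform energy bounds and weak continuity in $L^2$.

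\textbf{Step 2: apply the main theorem.} By Lemma~\ref{lem:critical-line-embedding}, the finite norm gives $u^3\in\widetilde L^p(0,T^*;\dot B^0_{m,1})$. Corollary~\ref{cor:critical-line}, or Theorem~\ref{thm:main} directly with $T=T^*$, then shows that every point $(x_0,T^*)$ is regular. So $u$ is bounded in some parabolic cylinder $B_r(x_0)\times(T^*-r^2,T^*)$ around each point.

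\textbf{Step 3: pass from local regularity to a global bound (the main obstacle).} Local boundedness near each point does not immediately give uniform control at spatial infinity. I would handle the two regions separately.
\begin{itemize}
\item \emph{Far field.} Since $u\in L^\infty_tL^2_x\cap L^2_t\dot H^1_x$, the CKN $\varepsilon$-regularity criterion applies at every point with $|x_0|$ large. The scaled energy of a fixed-size cylinder centred far out tends to zero uniformly in $|x_0|$, because the energy outside large balls is small. Concretely, $\sup_{t<T^*}\int_{|x|>R}|u|^2\to0$ as $R\to\infty$, which follows from the local energy inequality with cutoffs and the integrability of the energy flux.
\item \emph{Near field.} Compactness of the remaining ball $\{|x|\le R\}$ together with Step 2 gives a uniform bound $u\in L^\infty(\mathbb R^3\times(T^*-\delta,T^*))$.
\end{itemize}

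\textbf{Step 4: conclude.} Combining $u\in L^\infty$ with $u\in L^\infty_tL^2_x$ gives $u\in L^\infty(T^*-\delta,T^*;L^3)$, indeed in $L^\infty_tL^2\cap L^\infty$. The Escauriaza--Seregin--\v{S}ver\'ak criterion \cite{ESS2003} then shows that $T^*$ is not a blow-up time. Alternatively, a Serrin-class bound $u\in L^\infty L^\infty$ contradicts the lower bound $\|u(t)\|_{L^\infty}\gtrsim (T^*-t)^{-1/2}$ for maximal mild solutions. Either way this contradicts the maximality of $T^*$, which proves \eqref{eq:blowup-norm}.
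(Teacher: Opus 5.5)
Your proposal follows essentially the same route as the paper: show that the maximal mild solution is a suitable weak solution in the global energy class on $(0,T^*)$, apply Theorem~\ref{thm:main} at $T^*$ through Lemma~\ref{lem:critical-line-embedding}, combine far-field $\varepsilon$-regularity with a finite covering of a large ball to obtain $u\in L^\infty(\mathbb R^3\times(T^*-\tau,T^*))$, and contradict the $L^3$ blow-up alternative (the paper derives the energy equality from $t=0$ via the mild formula, where you cite it as standard). The one step to tighten is the far field: a fixed-scale criterion of CKN--Lin type also requires smallness of the scaled pressure, because the pressure-free CKN criterion is a $\limsup$ over all small scales and does not follow from smallness at one fixed scale; the paper supplies this through the vanishing spatial tails of $|u|^{10/3}$ and $|\pi|^{5/3}$ on $\mathbb R^3\times(0,T^*)$, which is also simpler than your $\sup_t\int_{|x|>R}|u|^2$ argument.
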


\begin{revision}
\paragraph{Proof strategy and main analytic point.}
The local energy inequality is tested against a one-dimensional backward heat kernel, which observes a vertical slab at scale $r_k$.  A naive conversion of the endpoint Besov norm into physical-scale component norms would count every high-frequency block at all coarser scales and lose summability.  We therefore keep the Littlewood--Paley index $j$, the pressure-source scale, and the observation scale $k$ separate until the corresponding flux is estimated.  If $j\ll k$, the thickness of the slab yields the factor $2^{-(k-j)/m}$.  If $j\gg k$, the dyadic projector is transferred to a compactly supported flux and an inverse Bernstein estimate yields $2^{-(j-k)}$.  For pressure sources separated from the observation slab, harmonicity supplies a further factor $2^{-\gamma(k-j)}$.  These kernels are summable and lead to a recursive bound for the scaled slab energy.  The resulting type-I estimate is converted into regularity by blow-up compactness: absolute continuity of the critical mixed norm forces the third component of the blow-up sequence to vanish, while the limiting zero-component suitable solution is regular.
\end{revision}

The paper is organized as follows. 
Section~\ref{sec:prelim} fixes notation and records the reconstruction, pressure-normalization, and interpolation tools.  Section~\ref{sec:endpoint} proves the endpoint type-I estimate.  Section~\ref{sec:global} establishes the compactness--rigidity argument and proves Theorem~\ref{thm:main} and its corollaries.

\section{Preliminaries}\label{sec:prelim}

Throughout the paper, $c>0$ denotes a constant that may change from line to line; dependence on fixed parameters is indicated by subscripts.  For $1<q<\infty$, $q'$ denotes the H\"older conjugate of $q$.  We write $x=(x_h,x_3)\in\R^2\times\R$.  For $z_0=(x_0,t_0)$ and $r>0$, set
\[
 B_r(x_0):=\{x\in\R^3:\abs{x-x_0}<r\},
 \qquad
 Q_r(z_0):=B_r(x_0)\times(t_0-r^2,t_0).
\]
When $z_0=(0,0)$, we write $B_r:=B_r(0)$ and $Q_r:=Q_r(0,0)$.

A pair $(u,\pi)$ is a suitable weak solution if it solves \eqref{eq:NS} distributionally, belongs locally to the energy class, and satisfies, for almost every $t_1<t$ and every nonnegative $\phi\in C_c^\infty(\R^3\times[t_1,t])$,
\begin{align}\label{eq:LEI}
 &\frac12\int_{\R^3}|u(x,t)|^2\phi(x,t)\dd x
 +\int_{t_1}^{t}\int_{\R^3}|\nabla u|^2\phi\dd x\dd \tau\notag\\
 &\quad\le \frac12\int_{\R^3}|u(x,t_1)|^2\phi(x,t_1)\dd x
 +\frac12\int_{t_1}^{t}\int_{\R^3}
 |u|^2(\partial_\tau+\Delta)\phi\dd x\dd \tau\notag\\
 &\qquad+\frac12\int_{t_1}^{t}\int_{\R^3}
 (|u|^2+2\pi)u\cdot\nabla\phi\dd x\dd \tau.
\end{align}
The initial term is absent whenever $\phi(\cdot,t_1)=0$.

Choose a standard homogeneous dyadic partition and write
\[
 \DeltaDot_j f:=\varphi(2^{-j}D)f,
 \qquad j\in\Z.
\]
We use the standard homogeneous distribution space
\[
 \Sdot:=\left\{f\in\mathcal S'(\mathbb R^3):
 \lim_{j\to-\infty}\dot S_jf=0
 \text{ in }\mathcal S'(\mathbb R^3)\right\};
\]
see \cite{BCD2011}.  The following elementary reconstruction statement will be used repeatedly.

\begin{revision}
\begin{lemma}[Homogeneous reconstruction in the energy class]\label{lem:homogeneous-reconstruction}
Let $I\subset\mathbb R$ be a finite interval and let $1\le a,b<\infty$.  Suppose that
\[
 f\in L^2(I\times\mathbb R^3),
 \qquad
 \sum_{j\in\mathbb Z}
 \|\dot\Delta_jf\|_{L^a(I;L^b(\mathbb R^3))}<\infty.
\]
Then
\[
 f=\sum_{j\in\mathbb Z}\dot\Delta_jf
 \quad\text{in }\mathcal D'(I\times\mathbb R^3)
 \quad\text{and in }L^a(I;L^b(\mathbb R^3)).
\]
In particular,
\begin{equation}\label{eq:homogeneous-reconstruction-bound}
 \|f\|_{L^a(I;L^b)}
 \le\sum_{j\in\mathbb Z}
 \|\dot\Delta_jf\|_{L^a(I;L^b)}.
\end{equation}
\end{lemma}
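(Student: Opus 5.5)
The plan has three steps: show that the series converges in the mixed space, identify its limit with $f$ as distributions, and read off the norm bound.

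\emph{Step 1: convergence of the series.} The space $L^a(I;L^b(\R^3))$ is a Banach space because $1\le a,b<\infty$. The hypothesis says that $\sum_j\dot\Delta_jf$ converges absolutely there, so it converges unconditionally to some $g\in L^a(I;L^b)$. The partial sums
\[
 f_N:=\sum_{|j|\le N}\dot\Delta_jf
\]
satisfy $f_N\to g$ in $L^a(I;L^b)$, hence in $\mathcal D'(I\times\R^3)$ (test functions are in the dual $L^{a'}L^{b'}$ locally). Convergence in the norm together with the triangle inequality gives \eqref{eq:homogeneous-reconstruction-bound} for $g$.

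\emph{Step 2: $g=f$.} This is the main point. For almost every $t\in I$, $f(t)\in L^2(\R^3)$, so by Plancherel the dyadic sum $\sum_{|j|\le N}\dot\Delta_jf(t)$ converges to $f(t)$ in $L^2(\R^3)$. The only exceptional frequency is $\xi=0$, which is a null set, and $\sum_j\varphi(2^{-j}\xi)=1$ for $\xi\neq0$. Dominated convergence in $\xi$ yields this convergence.

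Integrating in time, $\|f_N-f\|_{L^2(I\times\R^3)}\to0$, again by dominated convergence: the integrand is bounded by $\|f(t)\|_{L^2}^2$ since $0\le\sum_{|j|\le N}\varphi(2^{-j}\cdot)\le1$. Hence $f_N\to f$ in $\mathcal D'(I\times\R^3)$ as well. Uniqueness of distributional limits gives $f=g$.

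For $\dot\Delta_jf$ to make sense as an $L^2$ function jointly in $(x,t)$, I apply the multiplier fiberwise in $t$. Measurability in $t$ is standard, since $\dot\Delta_j$ is a bounded operator on $L^2(\R^3)$.

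The obstacle is mild. It is the homogeneous decomposition's possible loss of low-frequency "polynomial" parts, which is exactly what the $L^2$ hypothesis rules out: an $L^2$ function lies in $\Sdot$, because $\dot S_jf\to0$ in $L^2$ as $j\to-\infty$. Steps 1 and 2 are combined via uniqueness of distributional limits, giving both convergences and the bound.
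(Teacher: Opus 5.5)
Your proposal is correct and follows essentially the same route as the paper. The paper also obtains a limit $g$ of the symmetric partial sums in $L^a(I;L^b)$ from absolute summability, shows by Plancherel that the same partial sums converge to $f$ in $L^2(I\times\mathbb R^3)$, identifies $g=f$ through convergence in $\mathcal D'$, and gets the norm bound from the triangle inequality.
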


\begin{proof}
Set $f_N:=\sum_{|j|\le N}\dot\Delta_jf$.  The assumed absolute summability makes $(f_N)$ a Cauchy sequence in $L^a(I;L^b)$, so $f_N\to g$ in that space for some $g$.  On the other hand, the homogeneous Littlewood--Paley partition reconstructs every $L^2$ function: by Plancherel's theorem, $f_N\to f$ in $L^2(I\times\mathbb R^3)$.  Both convergences imply convergence in distributions, hence $g=f$.  The triangle inequality yields \eqref{eq:homogeneous-reconstruction-bound}.  Thus, in the energy class, the $L^2$ representative removes the usual polynomial ambiguity of the homogeneous decomposition.
\end{proof}
\end{revision}

For $i\in\{1,2,3\}$, $\mathcal R_i$ denotes the Riesz transform with Fourier multiplier $-\mathrm i\xi_i/|\xi|$, and $\mathbb P$ denotes the Helmholtz--Leray projection.  We use the standard Bernstein estimates: if the Fourier support of $f_j$ is contained in a ball of radius $c2^j$ and $1\le q\le m\le\infty$, then
\begin{align}
 \norm[L^m(\R^3)]{f_j}
 &\le c2^{3j(1/q-1/m)}\norm[L^q(\R^3)]{f_j},\label{eq:Bernstein-3d}\\
 \norm[L^m(\R^2_{x_h};L^\infty(\R_{x_3}))]{f_j}
 &\le c2^{2j(1/q-1/m)}2^{j/q}\norm[L^q(\R^3)]{f_j}.
 \label{eq:Bernstein-mixed}
\end{align}

\begin{revision}
\begin{lemma}[Whole-space pressure normalization]\label{lem:whole-space-pressure}
Let $I\subset\mathbb R$ be a finite interval and let $(u,\pi)$ be a suitable weak solution on $\mathbb R^3\times I$ satisfying
\[
 u\in L^\infty(I;L^2(\mathbb R^3))\cap L^2(I;\dot H^1(\mathbb R^3)).
\]
Define
\begin{equation}\label{eq:Riesz-pressure-general}
 \pi_{\rm R}:=\sum_{i,j=1}^3\mathcal R_i\mathcal R_j(u^iu^j).
\end{equation}
Then $\pi_{\rm R}\in L^2(I;L^{3/2}(\mathbb R^3))$, and there exists a function $c\in L^{3/2}_{\rm loc}(I)$ such that
\[
 \pi=\pi_{\rm R}+c(t)
 \quad\text{in }\mathcal D'(\mathbb R^3\times I).
\]
Moreover, replacing $\pi$ by $\pi_{\rm R}$ leaves the local energy inequality unchanged.
\end{lemma}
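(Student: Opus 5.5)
The plan has three steps: bound $\pi_{\rm R}$, identify $\pi$ up to a function of time via a Liouville argument, and check that the time-dependent constant drops out of the local energy inequality.

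\emph{Step 1: integrability of $\pi_{\rm R}$.} By Sobolev embedding and interpolation, $u\in L^\infty(I;L^2)\cap L^2(I;L^6)$. Hence $u\in L^4(I;L^3)$, since $\tfrac13=\tfrac{1/2}{2}+\tfrac{1/2}{6}$ and $\|u\|_{L^3}\le\|u\|_{L^2}^{1/2}\|u\|_{L^6}^{1/2}$. Therefore $u^iu^j\in L^2(I;L^{3/2})$. The Riesz transforms are bounded on $L^{3/2}(\R^3)$, so $\pi_{\rm R}\in L^2(I;L^{3/2})$.

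\emph{Step 2: $\pi-\pi_{\rm R}$ depends only on time.} Taking the divergence of \eqref{eq:NS} in distributions gives $-\Delta\pi=\partial_i\partial_j(u^iu^j)$. The definition of $\pi_{\rm R}$ gives $-\Delta\pi_{\rm R}=\partial_i\partial_j(u^iu^j)$ as well, since $\mathcal R_i\mathcal R_j$ has symbol $-\xi_i\xi_j/|\xi|^2$. Thus $h:=\pi-\pi_{\rm R}$ is spatially harmonic on $\R^3\times I$.

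To obtain the stronger statement $\nabla h=0$, I return to the momentum equation. We have $\nabla h=-\partial_tu+\Delta u-\nabla\cdot(u\otimes u)-\nabla\pi_{\rm R}$. Pair this in $x$ with a test function $\psi(x)\eta(t)$, where $\eta\in C_c^\infty(I)$. Every term on the right is controlled by global integrability of $u$ and $\pi_{\rm R}$; for instance, $\partial_tu$ lies in $L^1(I;H^{-s})+\dots$ in the sense of tempered distributions. Consequently $\int\!\eta\,\nabla h\,dt$ is, for a.e.\ time, a tempered distribution that is harmonic and lies in a sum of negative-order Sobolev spaces $H^{-N}+\dot W^{-1,3/2}$.

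A harmonic tempered distribution is a polynomial. A polynomial lying in such a space, which vanishes at infinity in an averaged sense, must be zero. Hence $\nabla h=0$, so $h=c(t)$. Evaluating on a fixed ball, $c(t)=\frac1{|B_1|}\int_{B_1}(\pi-\pi_{\rm R})\,dx$, and this lies in $L^{3/2}_{\rm loc}(I)$ because $\pi\in L^{3/2}_{\rm loc}$ and $\pi_{\rm R}\in L^2(I;L^{3/2})$.

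The main obstacle is making the time-integrated Liouville argument rigorous, since $h$ is only a distribution in time. I would handle it by mollifying in time, or by testing against $\eta(t)$ and applying the Liouville theorem to the resulting spatial distributions for each $\eta$. Either way one concludes $\nabla h=0$ in $\mathcal D'$. This in turn gives $h(x,t)=c(t)$ by the standard argument that a distribution with vanishing spatial gradient is independent of $x$.

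\emph{Step 3: the local energy inequality is unchanged.} Replacing $\pi$ by $\pi_{\rm R}$ changes the right side of \eqref{eq:LEI} only by $\int_{t_1}^t c(\tau)\int u\cdot\nabla\phi\,dx\,d\tau$. This vanishes, because $\int u\cdot\nabla\phi\,dx=-\int(\nabla\cdot u)\phi\,dx=0$ for a.e.\ $\tau$ by incompressibility. Hence the local energy inequality holds verbatim with $\pi_{\rm R}$.
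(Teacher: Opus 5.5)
Your proof is correct. Steps~1 and~3, and the identification of $c(t)$ as a ball average in $L^{3/2}_{\rm loc}(I)$, coincide with the paper's argument.

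The difference is in how you obtain $\nabla_x(\pi-\pi_{\rm R})=0$.

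\textbf{The paper's route.} It applies $I-\mathbb P$ to the momentum equation. The terms $\partial_tu-\Delta u$ are divergence free, and $(I-\mathbb P)\operatorname{div}(u\otimes u)=-\nabla\pi_{\rm R}$, so the conclusion follows in one line.

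\textbf{Your route.} You take the divergence to get $\Delta_x(\pi-\pi_{\rm R})=0$. You then use the momentum equation only to show that, for each $\eta\in C_c^\infty(I)$, the time-tested gradient $\nabla h_\eta$ lies in $H^{-1}+\dot W^{-1,3/2}$. Liouville (a tempered harmonic distribution is a polynomial) together with a scaling pairing then forces $\nabla h_\eta=0$.

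\textbf{What each buys.} The paper's argument is shorter, but it tacitly uses $(I-\mathbb P)\nabla\pi=\nabla\pi$. This is only meaningful once $\nabla\pi$ is known to carry nothing concentrated at zero frequency. The example $u=b(t)$, $\pi=-b'(t)\cdot x$ shows this is a genuine issue, and it is excluded only by the finite-energy hypothesis. Your Liouville step makes that use of finite energy explicit, at the cost of a little more bookkeeping.

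\textbf{A wording correction.} Drop the phrase ``for a.e.\ time'' attached to $\int\eta\,\nabla h\,dt$, since no time variable remains after integrating. The per-$\eta$ formulation you give at the end of Step~2 is the rigorous one.
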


\begin{proof}
Interpolation between $L^\infty_tL^2_x$ and $L^2_tL^6_x$ gives $u\in L^4(I;L^3)$; hence $u\otimes u\in L^2(I;L^{3/2})$, and Calder\'on--Zygmund boundedness yields the asserted estimate for $\pi_{\rm R}$.  Apply the complementary Helmholtz projection $I-\mathbb P$ to
\[
 \partial_tu-\Delta u+\operatorname{div}(u\otimes u)+\nabla\pi=0.
\]
The first two terms are divergence free, while
\[
 (I-\mathbb P)\operatorname{div}(u\otimes u)=-\nabla\pi_{\rm R}.
\]
Consequently, $\nabla_x(\pi-\pi_{\rm R})=0$ in distributions, so $\pi-\pi_{\rm R}=c(t)$ for a distribution depending only on time.  This distribution is represented by a locally $L^{3/2}$ function.  Indeed, if $J\Subset I$ and $B\Subset\mathbb R^3$ is any fixed ball, then for almost every $t\in J$,
\[
 c(t)=\frac1{|B|}\int_B\bigl(\pi-\pi_{\rm R}\bigr)(x,t)\,dx,
\]
and hence $c\in L^{3/2}(J)$.  Finally, for every compactly supported space--time test function $\phi$,
\[
 \int_I\int_{\mathbb R^3}c(t)u(x,t)\cdot\nabla\phi(x,t)\,dx\,dt
 =-\int_I c(t)\int_{\mathbb R^3}\phi\,\nabla\cdot u\,dx\,dt=0.
\]
Thus the pressure flux in the local energy inequality is invariant under this normalization.
\end{proof}
\end{revision}

For $n\ge0$, set $r_n:=2^{-n}$ and define the one-dimensional backward heat kernel
\begin{equation}\label{eq:Phi}
 \Phi_n(x_3,t)
 :=\frac{1}{\sqrt{4\pi(-t+r_n^2)}}
 \exp\left(-\frac{x_3^2}{4(-t+r_n^2)}\right),
 \qquad t<0.
\end{equation}
Then $(\partial_t+\partial_3^2)\Phi_n=0$.

\section{The zero-smoothness endpoint}\label{sec:endpoint}

We now assume
\begin{equation}\label{eq:endpoint-parameters}
 2<p<\infty,
 \qquad m:=\frac{3p}{p-2}>3,
 \qquad \frac2p+\frac3m=1,
\end{equation}
and work, after translation and a dyadic scaling, on $\R^3\times(-2,0)$.
\begin{revision}
Energy interpolation gives $u\in L^4(-2,0;L^3)$ and $u\otimes u\in L^2(-2,0;L^{3/2})$.  Lemma~\ref{lem:whole-space-pressure} therefore permits us to replace the suitable pressure by its whole-space Riesz representative
\begin{equation}\label{eq:endpoint-pressure-normalization}
 \pi=\sum_{a,b=1}^3\mathcal R_a\mathcal R_b(u^au^b)
 \in L^2(-2,0;L^{3/2}(\mathbb R^3)).
\end{equation}
All local energy inequalities below are unchanged by this normalization.
\end{revision}

Set
\begin{equation}\label{eq:endpoint-frequency-coefficients}
 a_j:=\norm[L^p(-2,0;L^m(\R^3))]{\DeltaDot_j u^3},
 \qquad j\in\Z.
\end{equation}
The endpoint hypothesis is $\sum_{j\in\Z}a_j<\infty$.  Since $u^3\in L^2((-2,0)\times\mathbb R^3)$, Lemma~\ref{lem:homogeneous-reconstruction} identifies the absolutely convergent dyadic series with the original component $u^3$ in $L^p_tL^m_x$ and in distributions.

For every $k\in\Z$, let
\begin{equation}\label{eq:endpoint-slabs}
 r_k:=2^{-k},
 \qquad I_k:=\bigl(-2\min\{r_k^2,1\},0\bigr),
 \qquad S_k:=\R^2\times(-2r_k,2r_k),
 \qquad \mathcal Q_k:=S_k\times I_k.
\end{equation}
Thus negative time indices are automatically truncated to the interval $(-2,0)$ on which the normalized solution is defined; for $k\ge0$ this agrees with $I_k=(-2r_k^2,0)$.
For $k\ge0$, define
\begin{align}
 E_k
 &:=\esssup_{t\in I_k}\int_{S_k}|u(x,t)|^2\dd x
   +\int_{\mathcal Q_k}|\nabla u|^2\dd x\dd t,
 \label{eq:endpoint-energy}\\
 e_k&:=r_k^{-1}E_k.
 \label{eq:endpoint-scaled-energy}
\end{align}
We also put
\begin{equation}\label{eq:endpoint-global-energy}
 E_{\mathrm g}:=
 \esssup_{-2<t<0}\int_{\R^3}|u(x,t)|^2\dd x
 +\int_{-2}^{0}\int_{\R^3}|\nabla u|^2\dd x\dd t.
\end{equation}
For notational convenience, $E_k=e_k=E_{\mathrm g}$ when $k<0$.  This convention is used only across fixed index shifts.  In particular, for every fixed integer $c\ge0$ and $j\ge0$,
\begin{equation}\label{eq:endpoint-negative-index-bound}
 r_j^{-1}E_{j-c}\le 2^c e_{j-c}.
\end{equation}

\begin{lemma}[Slab interpolation and product estimates]\label{lem:endpoint-products}
Let $k\ge0$, $2\le p_1\le\infty$, $2\le q_1\le6$, and $2/p_1+3/q_1=3/2$.  For every fixed integer $c_0\ge0$,
\begin{equation}\label{eq:endpoint-slab-interpolation}
 \norm[L^{p_1}(I_k;L^{q_1}(S_k))]{u}^2
 \le C_{c_0} E_{k-c_0}.
\end{equation}
If $\chi_k=\chi_k(x_3,t)\in C_c^\infty(\R\times(-2,0])$ satisfies
\[
 0\le\chi_k\le1,
 \qquad \operatorname{supp}\chi_k\subset\mathcal Q_{k-c_0},
 \qquad |\partial_3\chi_k|\le c r_k^{-1},
\]
then
\begin{align}
 \norm[L^{p'}(-2,0;L^{m'}(\R^3))]{\chi_k u\otimes u}
 &\le c r_kE_{k-c_0},
 \label{eq:endpoint-product-zero}\\
 \norm[L^{p'}(-2,0;L^{m'}(\R^3))]{\nabla(\chi_k u\otimes u)}
 &\le c E_{k-c_0}.
 \label{eq:endpoint-product-one}
\end{align}
The products are extended by zero outside the support of $\chi_k$, so the norms in \eqref{eq:endpoint-product-zero}--\eqref{eq:endpoint-product-one} are full-space norms.  The corresponding estimates without a spatial cutoff hold on $I_k\times\R^3$ with $E_{k-c_0}$ replaced by $E_{\mathrm g}$.
\end{lemma}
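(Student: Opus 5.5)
The proof should use Sobolev on the slab, then interpolation, then H\"older's inequality with exponent bookkeeping on the Serrin line $2/p+3/m=1$. Throughout I use the monotonicity $S_k\subset S_{k-c_0}$ and $I_k\subset I_{k-c_0}$. This gives $E_k\le E_{k-c_0}$, and the convention $E_{k-c_0}=E_{\mathrm g}$ for $k-c_0<0$ is consistent with this, since $E_l\le E_{\mathrm g}$ for all $l$. It therefore suffices to prove each estimate on the enlarged region $\mathcal Q_{k-c_0}$ with $E_{k-c_0}$ on the right. Only at the end do I replace $r_{k-c_0}=2^{c_0}r_k$ by $r_k$, absorbing the factor into $C_{c_0}$.

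\emph{Slab interpolation \eqref{eq:endpoint-slab-interpolation}.} The key tool is a scale-invariant Sobolev inequality on the slab $S_l=\R^2\times(-2r_l,2r_l)$:
\[
 \norm[L^6(S_l)]{v}\le c\bigl(\norm[L^2(S_l)]{\nabla v}+r_l^{-1}\norm[L^2(S_l)]{v}\bigr).
\]
I would prove it for $r_l=1$ using a bounded extension operator across the two planes $x_3=\pm2$ (higher-order reflection in $x_3$, which commutes with horizontal translations), followed by the Sobolev embedding on $\R^3$. Dilation then gives the general case with a constant independent of $l$.

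Squaring and integrating over $I_l$, whose length is at most $2r_l^2$, gives
\[
 \norm[L^2(I_l;L^6(S_l))]{u}^2\le c\Bigl(\int_{\mathcal Q_l}|\nabla u|^2+r_l^{-2}|I_l|\esssup_{I_l}\norm[L^2(S_l)]{u}^2\Bigr)\le cE_l.
\]
Together with $\norm[L^\infty(I_l;L^2(S_l))]{u}^2\le E_l$, H\"older interpolation between these two endpoints yields every admissible pair $(p_1,q_1)$ with $2/p_1+3/q_1=3/2$. For the global version, use the Sobolev embedding on $\R^3$ and $E_{\mathrm g}$ directly.

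\emph{Zeroth-order product \eqref{eq:endpoint-product-zero}.} The bound $\norm{\chi_ku\otimes u}_{L^{p'}L^{m'}}\le\norm[L^{2p'}(I_{k-c_0};L^{2m'}(S_{k-c_0}))]{u}^2$ reduces the claim to a mixed-norm bound on $u$. Set $q_1=2m'$; since $m>3$ we have $m'\in(1,3/2)$, so $q_1\in(2,3)\subset[2,6]$. The partner exponent from $2/p_1+3/q_1=3/2$, computed using $2/p+3/m=1$, is
\[
 \frac1{p_1}=\frac14-\frac1{2p}\in\Bigl(0,\frac14\Bigr),
 \qquad\text{so}\qquad \frac1{2p'}-\frac1{p_1}=\frac14.
\]
H\"older's inequality in time on an interval of length at most $2r_{k-c_0}^2$ then gives $\norm[L^{2p'}]{u}\le c\,r_{k-c_0}^{1/2}\norm[L^{p_1}]{u}$. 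Squaring and applying \eqref{eq:endpoint-slab-interpolation} produces $c\,r_kE_{k-c_0}$. This single gain of $r_k$ is exactly the supercritical slack of the product relative to the energy scaling.

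\emph{First-order product \eqref{eq:endpoint-product-one}.} Expand
\[
 \nabla(\chi_ku\otimes u)=(\partial_3\chi_k)\,u\otimes u+\chi_k(\nabla u\otimes u+u\otimes\nabla u).
\]
The first term, using $|\partial_3\chi_k|\le cr_k^{-1}$ and the previous step, is bounded by $c\,r_k^{-1}\cdot r_kE_{k-c_0}$. For the remaining terms, apply H\"older with $\nabla u\in L^2L^2(\mathcal Q_{k-c_0})$ and $u\in L^aL^b$, where $1/a=1/2-1/p$ and $1/b=1/2-1/m$. One checks that $2/a+3/b=3/2$ and $b\in(2,6)$, the latter because $m>3$. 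Hence \eqref{eq:endpoint-slab-interpolation} applies with no time-H\"older loss, giving $E_{k-c_0}^{1/2}\cdot E_{k-c_0}^{1/2}$. The versions without cutoff on $I_k\times\R^3$ follow identically with $E_{\mathrm g}$.

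The main point needing care is the uniform slab Sobolev inequality, specifically the extension operator on an unbounded-in-$x_h$ slab. The rest is exponent arithmetic, which I would double-check against the constraint $m>3$ (i.e.\ $p>2$) to ensure all exponents lie in the admissible ranges.
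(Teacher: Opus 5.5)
Your proposal is correct and follows essentially the same route as the paper. It uses a scaled extension operator to get the uniform slab Sobolev inequality, interpolates with the energy onto the line $2/p_1+3/q_1=3/2$, and makes the same exponent choices: $q_1=2m'$, $p_1=4m/3$ with a time-H\"older gain of $r_k$ for the zeroth-order product, and $(p_1,q_1)=(2m/3,\,2m/(m-2))$ paired with $\nabla u\in L^2_tL^2_x$ for the derivative term. The differences are cosmetic: you first obtain $L^2_tL^6_x$ and then interpolate in mixed norms rather than pointwise in time, and you treat $k<c_0$ uniformly via monotonicity and $|I_l|\le 2r_l^2$ rather than as a separate finite case.
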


\begin{proof}
\begin{revision}
We first justify the slab estimate with constants independent of $k$.  By scaling a fixed extension operator on $\mathbb R^2\times(-2,2)$, there is a linear map
\[
 \mathcal E_k:H^1(S_k)\longrightarrow H^1(\mathbb R^3)
\]
such that
\begin{align*}
 \|\mathcal E_k f\|_{L^2(\mathbb R^3)}
 &\le C\|f\|_{L^2(S_k)},\\
 \|\nabla\mathcal E_k f\|_{L^2(\mathbb R^3)}
 &\le C\bigl(\|\nabla f\|_{L^2(S_k)}
 +r_k^{-1}\|f\|_{L^2(S_k)}\bigr).
\end{align*}
The homogeneous Sobolev inequality in $\mathbb R^3$ then gives
\begin{equation}\label{eq:slab-Sobolev-detailed}
 \|f\|_{L^6(S_k)}^2
 \le C\|\nabla f\|_{L^2(S_k)}^2
 +Cr_k^{-2}\|f\|_{L^2(S_k)}^2.
\end{equation}
For $2\le q_1\le6$, put
\[
 \theta:=3\left(\frac12-\frac1{q_1}\right)=\frac2{p_1}.
\]
Interpolation between $L^2$ and \eqref{eq:slab-Sobolev-detailed} yields, for almost every $t$,
\[
 \|u(t)\|_{L^{q_1}(S_k)}
 \le C\|u(t)\|_{L^2(S_k)}^{1-\theta}
 \bigl(\|\nabla u(t)\|_{L^2(S_k)}
 +r_k^{-1}\|u(t)\|_{L^2(S_k)}\bigr)^\theta.
\]
When $p_1<\infty$, raising this inequality to the power $p_1$, integrating over $I_k$, and using $\theta p_1=2$ and $|I_k|\le2r_k^2$ gives
\begin{align*}
 \|u\|_{L^{p_1}(I_k;L^{q_1}(S_k))}^2
 &\le C\left(\esssup_{I_k}\|u(t)\|_{L^2(S_k)}^2\right)^{1-\theta}\\
 &\quad\times\left(\int_{I_k}\|\nabla u(t)\|_{L^2(S_k)}^2dt
 +r_k^{-2}\int_{I_k}\|u(t)\|_{L^2(S_k)}^2dt\right)^\theta\\
 &\le CE_k.
\end{align*}
The case $p_1=\infty$, $q_1=2$ is immediate.  If $k\ge c_0$, the inclusions $I_k\subset I_{k-c_0}$ and $S_k\subset S_{k-c_0}$ give \eqref{eq:endpoint-slab-interpolation} with $C_{c_0}E_{k-c_0}$.  The finitely many indices $0\le k<c_0$ are handled by the whole-space energy interpolation and the convention $E_{k-c_0}=E_{\mathrm g}$.
\end{revision}

For \eqref{eq:endpoint-product-zero}, take $q_1=2m'$ in \eqref{eq:endpoint-slab-interpolation}.  The corresponding time exponent is $p_1=4m/3$.  If $k\ge c_0$, apply \eqref{eq:endpoint-slab-interpolation} at the scale $k-c_0$.  If $0\le k<c_0$, use instead the whole-space energy interpolation on $(-2,0)\times\R^3$; there are only finitely many such $k$, and $r_k$ is bounded below by a positive constant depending on $c_0$.  In either case, the relevant time interval has length bounded by a fixed multiple of $r_k^2$, and restriction of the time norm gives
\begin{align*}
 \norm[L^{2p'}(I_{k-c_0};L^{2m'}(S_{k-c_0}))]{u}^2
 &\le c r_k^{2/p'-4/p_1}E_{k-c_0}\\
 &=c r_k^{2/p'-3/m}E_{k-c_0}
 =c r_kE_{k-c_0},
\end{align*}
where the last equality follows from $2/p+3/m=1$.  Hence
\[
 \norm[L^{p'}(-2,0;L^{m'}(\R^3))]{\chi_k u\otimes u}
 \le \norm[L^{2p'}(I_{k-c_0};L^{2m'}(S_{k-c_0}))]{u}^2
 \le cr_kE_{k-c_0},
\]
which proves \eqref{eq:endpoint-product-zero}.

For the derivative term, apply \eqref{eq:endpoint-slab-interpolation} with
\[
 p_1=\frac{2m}{3},
 \qquad q_1=\frac{2m}{m-2}.
\]
These exponents satisfy
\[
 \frac2{p_1}+\frac3{q_1}=\frac32,
 \qquad \frac1{p'}=\frac1{p_1}+\frac12,
 \qquad \frac1{m'}=\frac1{q_1}+\frac12.
\]
Consequently, with the whole-space interpretation just described when $k<c_0$,
\[
 \norm[L^{p'}(-2,0;L^{m'}(\R^3))]{\chi_k u\nabla u}
 \le c\norm[L^{p_1}(I_{k-c_0};L^{q_1}(S_{k-c_0}))]{u}
       \norm[L^2(I_{k-c_0};L^2(S_{k-c_0}))]{\nabla u}
 \le cE_{k-c_0}.
\]
If the derivative falls on $\chi_k$, then
\begin{align*}
 \norm[L^{p'}(-2,0;L^{m'}(\R^3))]{(\nabla\chi_k)u\otimes u}
 &\le cr_k^{-1}
 \norm[L^{p'}(I_{k-c_0};L^{m'}(S_{k-c_0}))]{u\otimes u}\\
 &\le cr_k^{-1}\cdot r_kE_{k-c_0}
 \le cE_{k-c_0},
\end{align*}
where the local zero-order product bound follows from the same calculation as above.  This proves \eqref{eq:endpoint-product-one}.  The whole-space assertions follow from the same interpolation between $L^\infty_tL^2_x$ and $L^2_tL^6_x$.
\end{proof}

We next introduce nested vertical--parabolic cutoffs.  Choose even functions $\zeta,\zeta^\sharp\in C_c^\infty(\R)$ satisfying $0\le\zeta,\zeta^\sharp\le1$ and nonincreasing as functions of $|s|$, and nonincreasing functions $\rho,\rho^\sharp\in C_c^\infty([0,\infty))$ satisfying $0\le\rho,\rho^\sharp\le1$, such that
\begin{align*}
 &\zeta=1\text{ on }[-1,1],
 &&\operatorname{supp}\zeta\subset[-2,2],
 &\rho=1\text{ on }[0,1],
 &&\operatorname{supp}\rho\subset[0,2),\\
 &\zeta^\sharp=1\text{ on }[-4,4],
 &&\operatorname{supp}\zeta^\sharp\subset[-8,8],
 &\rho^\sharp=1\text{ on }[0,16],
 &&\operatorname{supp}\rho^\sharp\subset[0,32).
\end{align*}
All cutoffs are chosen monotone under dilation.  Put
\begin{equation}\label{eq:endpoint-nested-cutoffs}
 \eta_k(x_3,t):=\zeta(x_3/r_k)\rho((-t)/r_k^2),
 \qquad
 \xi_k(x_3,t):=\zeta^\sharp(x_3/r_k)\rho^\sharp((-t)/r_k^2),
 \qquad \eta:=\eta_0.
\end{equation}
For a fixed $n\ge1$, define
\begin{equation}\label{eq:endpoint-observation-partition}
 \omega_{n,k}:=
 \begin{cases}
  \eta_k-\eta_{k+1},&0\le k<n,\\
  \eta_n,&k=n,
 \end{cases}
 \qquad
 \sum_{k=0}^n\omega_{n,k}=\eta,
\end{equation}
and
\begin{equation}\label{eq:endpoint-source-partition}
 \vartheta_{n,j}:=
 \begin{cases}
  1-\xi_1,&j=0,\\
  \xi_j-\xi_{j+1},&1\le j<n,\\
  \xi_n,&j=n.
 \end{cases}
\end{equation}
Then $\sum_{j=0}^n\vartheta_{n,j}=1$ on $\R^3\times(-2,0)$.  With $\Phi_n$ from \eqref{eq:Phi}, set
\begin{equation}\label{eq:endpoint-Wnk}
 W_{n,k}:=\omega_{n,k}\partial_3\Phi_n.
\end{equation}

\begin{lemma}[Endpoint cutoff geometry]\label{lem:endpoint-cutoff-geometry}
There is an absolute integer $L\ge8$ such that, for $0\le k\le n$,
\begin{equation}\label{eq:endpoint-W-bounds}
 \operatorname{supp}W_{n,k}\subset\mathcal Q_{k-2},
 \qquad
 \norm[L^\infty((-2,0)\times\R^3)]{W_{n,k}}\le c r_k^{-2},
 \qquad
 \norm[L^\infty((-2,0)\times\R^3)]{\nabla W_{n,k}}\le c r_k^{-3}.
\end{equation}
If $k\ge L+2$, $0\le j\le k-L-1$, and $t\in I_{k-2}$, then
\begin{equation}\label{eq:endpoint-separation}
 \operatorname{dist}\bigl(\operatorname{supp}\vartheta_{n,j}(\cdot,t),S_{k-2}\bigr)
 \ge c r_j.
\end{equation}
Finally, if $J=k-L\ge1$, then
\begin{equation}\label{eq:endpoint-source-telescope}
 \sum_{j=J}^{n}\vartheta_{n,j}=\xi_J.
\end{equation}
\end{lemma}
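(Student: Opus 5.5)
The plan is to read all three assertions directly off the explicit formulas \eqref{eq:endpoint-nested-cutoffs}--\eqref{eq:endpoint-Wnk}, tracking supports in the variables $x_3/r_k$ and $(-t)/r_k^2$ and using scaling for the derivative bounds of $\Phi_n$.

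\textbf{Support of $W_{n,k}$.} For $k<n$, $\omega_{n,k}=\eta_k-\eta_{k+1}$ is supported in $\operatorname{supp}\eta_k\subset\{|x_3|\le 2r_k,\ -t<2r_k^2\}$. For $k=n$, $\omega_{n,n}=\eta_n$ has the same type of support. This support is contained in $\mathcal Q_{k-2}=S_{k-2}\times I_{k-2}$, because $S_{k-2}$ has half-width $8r_k$ and $I_{k-2}$ has length $2\min\{16r_k^2,1\}$. When $k\le1$ the interval is truncated to $(-2,0)$, which is harmless since we only work on $(-2,0)$.

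\textbf{$L^\infty$ bounds on $W_{n,k}$.} Write $s=-t+r_n^2$. One has $|\partial_3\Phi_n|\le c\,|x_3|\,s^{-3/2}e^{-x_3^2/4s}\le c\,s^{-1}$. The point is that this has to be improved to $r_k^{-2}$ on the support of $\omega_{n,k}$, and the two cases differ.

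For $k<n$, the support of $\eta_k-\eta_{k+1}$ lies where either $|x_3|\ge r_{k+1}$ or $-t\ge r_{k+1}^2$.
\begin{itemize}
\item If $-t\ge r_k^2/4$, then $s\ge r_k^2/4$, and $|\partial_3\Phi_n|\le cs^{-1}\le cr_k^{-2}$.
\item If $|x_3|\ge r_k/2$ while $s\lesssim r_k^2$, the Gaussian factor wins: $|x_3|s^{-3/2}e^{-x_3^2/4s}\le C|x_3|^{-2}\le cr_k^{-2}$, using $\sup_y y^3e^{-y^2}<\infty$.
\end{itemize}
For $k=n$ we have $s\ge r_n^2$ directly.

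The gradient bound follows the same pattern. Each $x_3$-derivative of $\Phi_n$ gains a factor $s^{-1/2}$ or $|x_3|^{-1}$, both $\lesssim r_k^{-1}$ in the relevant region, so $|\partial_3^2\Phi_n|\le cr_k^{-3}$. Derivatives of the cutoffs also cost $r_k^{-1}$. The horizontal derivatives vanish because $W_{n,k}$ is independent of $x_h$.

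\textbf{Main obstacle: the separation estimate \eqref{eq:endpoint-separation}.} The difficulty is that $\vartheta_{n,j}$ depends on $t$ through $\rho^\sharp$. This needs a careful case split, which I would organize as follows.
\begin{itemize}
\item For $1\le j<n$, the function $\xi_j-\xi_{j+1}$ vanishes where $\xi_{j+1}=1=\xi_j$, i.e. on $\{|x_3|\le4r_{j+1}\}\cap\{-t\le16r_{j+1}^2\}=\{|x_3|\le 2r_j,\ -t\le 4r_j^2\}$.
\item For $t\in I_{k-2}$ we have $-t\le 32r_k^2\le4r_j^2$ once $k-j\ge L$ with $L\ge 8$.
\item Hence at such times $\operatorname{supp}\vartheta_{n,j}(\cdot,t)\subset\{|x_3|\ge2r_j\}$, while $S_{k-2}\subset\{|x_3|<8r_k\}$ with $8r_k\le r_j$.
\item The distance is therefore at least $r_j$.
\end{itemize}
The case $j=0$ uses $1-\xi_1$ in the same way. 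The case $j=n$ cannot occur, since $j\le k-L-1<n$. If needed, enlarge $L$ to absorb constants.

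\textbf{Telescoping \eqref{eq:endpoint-source-telescope}.} For $1\le J\le n$,
\[
\sum_{j=J}^{n-1}(\xi_j-\xi_{j+1})+\xi_n=\xi_J .
\]
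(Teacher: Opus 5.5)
Your proposal is correct and follows essentially the same route as the paper: support inclusion via $\operatorname{supp}\eta_k$; the dichotomy $-t\ge r_{k+1}^2$ or $|x_3|\ge r_{k+1}$ on $\operatorname{supp}(\eta_k-\eta_{k+1})$, with the Gaussian absorbing negative powers of $-t+r_n^2$ in the far region; the observation that for $t\in I_{k-2}$ the temporal factors of $\xi_j$ and $\xi_{j+1}$ equal one, so the source piece lives in $\{|x_3|\ge 2r_j\}$; and direct telescoping. Your explicit arithmetic ($32r_k^2\le 4r_j^2$ and $8r_k\le r_j$ once $k-j\ge 3$) also makes visible that $L\ge 8$ already suffices without further enlargement.
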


\begin{proof}
Write
\[
 \tau:=-t+r_n^2.
\]
Direct differentiation of the Gaussian shows that, for $a=1,2$,
\begin{equation*}
 |\partial_3^a\Phi_n(x_3,t)|
 \le c_a\tau^{-(a+1)/2}
 \left(1+\frac{|x_3|}{\sqrt\tau}\right)^{a+1}
 \exp\left(-\frac{x_3^2}{8\tau}\right).
\end{equation*}

\emph{Step 1: support of the observation pieces.}
Since $\operatorname{supp}\omega_{n,k}\subset\operatorname{supp}\eta_k$, the definitions of $\zeta$ and $\rho$ give
\[
 |x_3|<2r_k,
 \qquad -t<2r_k^2
 \quad\text{on }\operatorname{supp}\omega_{n,k}.
\]
Because $2r_k<2r_{k-2}$ and $2r_k^2<2r_{k-2}^2$, this implies
\[
 \operatorname{supp}W_{n,k}\subset\mathcal Q_{k-2}.
\]

\emph{Step 2: bounds for $W_{n,k}$ and its spatial gradient.}
If $k=n$, then $\tau\ge r_n^2=r_k^2$, and the Gaussian derivative bound above immediately yields
\[
 |\partial_3\Phi_n|\le cr_k^{-2},
 \qquad
 |\partial_3^2\Phi_n|\le cr_k^{-3}
 \quad\text{on }\operatorname{supp}\omega_{n,n}.
\]
Assume now that $k<n$.  The cutoff $\eta_{k+1}$ equals one whenever
\[
 |x_3|\le r_{k+1},
 \qquad -t\le r_{k+1}^2.
\]
Hence every point in $\operatorname{supp}(\eta_k-\eta_{k+1})$ satisfies at least one of
\begin{equation*}
 -t\ge r_{k+1}^2
 \qquad\text{or}\qquad
 |x_3|\ge r_{k+1}.
\end{equation*}
In the first case, $\tau\ge r_{k+1}^2=r_k^2/4$, so the desired estimates follow directly from the Gaussian derivative bound above.  In the second case, set $y=|x_3|/\sqrt\tau$.  Since $|x_3|\ge r_{k+1}=r_k/2$,
\[
 \tau^{-(a+1)/2}(1+y)^{a+1}e^{-y^2/8}
 =|x_3|^{-(a+1)}y^{a+1}(1+y)^{a+1}e^{-y^2/8}
 \le cr_k^{-(a+1)}
\]
for $a=1,2$.  Thus the same derivative bounds hold in both alternatives in the preceding dichotomy.

Since $|\partial_3\omega_{n,k}|\le cr_k^{-1}$ and the horizontal derivatives of $\omega_{n,k}$ and $\Phi_n$ vanish,
\begin{align*}
 |W_{n,k}|
 &\le cr_k^{-2},\\
 |\nabla W_{n,k}|
 &=|\partial_3W_{n,k}|\\
 &\le |\partial_3\omega_{n,k}|\,|\partial_3\Phi_n|
      +|\omega_{n,k}|\,|\partial_3^2\Phi_n|
 \le cr_k^{-3}.
\end{align*}
This proves \eqref{eq:endpoint-W-bounds}.

\emph{Step 3: separation of a remote source from the observation slab.}
Let $k\ge L+2$, $0\le j\le k-L-1$, and $t\in I_{k-2}$.  Then
\[
 -t\le2r_{k-2}^2=32r_k^2.
\]
Since $r_k/r_{j+1}\le2^{-L}$, choosing $L$ sufficiently large gives
\[
 -t\le16r_{j+1}^2.
\]
Therefore the temporal factors in $\xi_j$ and $\xi_{j+1}$ are equal to one.  If $j\ge1$, then $\xi_{j+1}=1$ for $|x_3|\le4r_{j+1}=2r_j$, and hence
\[
 \operatorname{supp}(\xi_j-\xi_{j+1})(\cdot,t)
 \subset\{|x_3|\ge2r_j\}.
\]
For $j=0$, the same conclusion, with another fixed positive multiple of $r_0$, follows because $1-\xi_1$ vanishes on the central slab where $\xi_1=1$.  On the other hand, the vertical half-width of $S_{k-2}$ is $8r_k$.  Enlarging $L$ once more, if necessary, gives
\[
 \operatorname{dist}\bigl(\operatorname{supp}\vartheta_{n,j}(\cdot,t),S_{k-2}\bigr)
 \ge cr_j,
\]
which is \eqref{eq:endpoint-separation}.

\emph{Step 4: telescoping of the nonseparated source pieces.}
If $J=k-L\ge1$, then
\[
 (\xi_J-\xi_{J+1})+\cdots+(\xi_{n-1}-\xi_n)+\xi_n=\xi_J.
\]
By the definition of $\vartheta_{n,j}$ this is precisely \eqref{eq:endpoint-source-telescope}.
\end{proof}

Fix once and for all an integer $c_*\ge L+4$ large enough to absorb every fixed cutoff enlargement used below.

Define the two-sided kernel
\begin{equation}\label{eq:endpoint-kernel}
 \mathcal K_d:=
 \begin{cases}
  2^{-d/m},&d\ge L,\\
  1,&|d|<L,\\
  2^d,&d\le-L,
 \end{cases}
\end{equation}
and, for $k\ge0$,
\begin{equation}\label{eq:endpoint-Bk}
 B_k:=\sum_{j\in\Z}\mathcal K_{k-j}a_j.
\end{equation}
Choose $m<p_2<\infty$ and let $q_2>1$ be determined by
\begin{equation}\label{eq:endpoint-p2-q2}
 \frac2{p_2}+\frac3{q_2}=3.
\end{equation}
Since $p_2>m$, we have
\[
 \frac1{q_2}=1-\frac{2}{3p_2}>1-\frac1m=\frac1{m'},
\]
and hence $q_2<m'$.  Set
\begin{equation}\label{eq:endpoint-gamma-envelope}
 \gamma:=\frac2m-\frac2{p_2}>0,
 \qquad
 \widehat B_j:=\sum_{k\ge j}2^{-\gamma(k-j)}B_k.
\end{equation}
Since $\mathcal K\in\ell^1(\Z)$, discrete Young's inequality and Tonelli's theorem give
\begin{equation}\label{eq:endpoint-summability}
 \sum_{k\ge0}B_k+\sum_{j\ge0}\widehat B_j
 \le c_{p,p_2}\sum_{j\in\Z}a_j,
\end{equation}
and both tails tend to zero.

\begin{lemma}[Frequency-resolved endpoint flux]\label{lem:endpoint-flux}
Fix $0\le k\le n$.  Let $H_k$ be a scalar function on $\mathcal Q_{k-2}$ satisfying
\begin{equation}\label{eq:endpoint-H-bounds}
 \norm[L^{p'}(I_{k-2};L^{m'}(S_{k-2}))]{H_k}\le M_k,
 \qquad
 \norm[L^{p'}(I_{k-2};L^{m'}(S_{k-2}))]{\nabla H_k}
 \le c r_k^{-1}M_k.
\end{equation}
Then, for every $\tau\in I_n$,
\begin{equation}\label{eq:endpoint-flux-bound}
 \abs{\int_{-2}^{\tau}\int_{\R^3}u^3H_kW_{n,k}\dd x\dd t}
 \le c B_k r_k^{-2}M_k.
\end{equation}
The constant is independent of $\tau$.
\end{lemma}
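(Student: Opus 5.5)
Set $G_k:=H_kW_{n,k}\mathbf 1_{(-2,\tau)}(t)$. It is supported in $\mathcal Q_{k-2}$, and by \eqref{eq:endpoint-W-bounds} and \eqref{eq:endpoint-H-bounds} it satisfies
\[
 \|G_k\|_{L^{p'}_tL^{m'}_x}\le cr_k^{-2}M_k,
 \qquad
 \|\nabla G_k\|_{L^{p'}_tL^{m'}_x}\le cr_k^{-3}M_k .
\]
Here $\nabla G_k=\nabla H_k\,W_{n,k}+H_k\nabla W_{n,k}$. The time cutoff $\mathbf 1_{(-2,\tau)}$ does not interact with spatial derivatives, so all bounds are uniform in $\tau$. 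By Lemma~\ref{lem:homogeneous-reconstruction} we may write $u^3=\sum_j\dot\Delta_ju^3$ in $L^p_tL^m_x$. Since $G_k\in L^{p'}_tL^{m'}_x$, the pairing splits as
\[
 \int\!\!\int u^3G_k=\sum_j\int\!\!\int\dot\Delta_ju^3\,G_k .
\]
It then suffices to show that each term is bounded by $c\,\mathcal K_{k-j}a_jr_k^{-2}M_k$. Summing over $j$ gives $cB_kr_k^{-2}M_k$. The estimate splits into three frequency regimes.

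\emph{Middle range $|k-j|<L$.} Use H\"older directly:
\[
 \Bigl|\int\!\!\int\dot\Delta_ju^3G_k\Bigr|\le a_j\|G_k\|_{L^{p'}L^{m'}}\le ca_jr_k^{-2}M_k .
\]

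\emph{High frequencies $j\ge k+L$, i.e.\ $d=k-j\le-L$.} Move the projector onto $G_k$ and write $\dot\Delta_j=\widetilde{\dot\Delta}_j\dot\Delta_j$. The block $\dot\Delta_jG_k$ has frequencies $\sim2^j$, so the inverse Bernstein estimate gives
\[
 \|\dot\Delta_jG_k\|_{L^{m'}}\le c2^{-j}\|\nabla G_k\|_{L^{m'}} .
\]
Therefore
\[
 \Bigl|\int\!\!\int\dot\Delta_ju^3G_k\Bigr|
 =\Bigl|\int\!\!\int\widetilde{\dot\Delta}_j u^3\,\dot\Delta_jG_k\Bigr|
 \le c\,\tilde a_j2^{-j}r_k^{-3}M_k=c\,\tilde a_j2^{k-j}r_k^{-2}M_k ,
\]
where $\tilde a_j=\sum_{|i-j|\le1}a_i$. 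Alternatively, keep $a_j$ itself by writing $\int\dot\Delta_ju^3G_k=\int u^3\dot\Delta_jG_k$. The cleaner route is to pair $\dot\Delta_ju^3$ with $\widetilde{\dot\Delta}_jG_k$, which gives exactly $a_j$. In either version the shifted indices are absorbed into $\mathcal K$ up to a constant.

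\emph{Low frequencies $j\le k-L$, i.e.\ $d\ge L$.} This is the main obstacle: the thin slab must produce the gain $2^{-(k-j)/m}$. Use H\"older in $x_3$ over the support of $G_k$, whose vertical width is at most $cr_k$:
\[
 \Bigl|\int\!\!\int\dot\Delta_ju^3G_k\Bigr|
 \le\|\dot\Delta_ju^3\|_{L^p_tL^m_{x_h}L^\infty_{x_3}}\,
 \|G_k\|_{L^{p'}_tL^{m'}_{x_h}L^1_{x_3}}
 \le\|\dot\Delta_ju^3\|_{L^p_tL^m_{x_h}L^\infty_{x_3}}\,
 (cr_k)^{1/m}\|G_k\|_{L^{p'}L^{m'}} .
\]
The mixed Bernstein inequality \eqref{eq:Bernstein-mixed} with $q=m$ gives
\[
 \|\dot\Delta_ju^3\|_{L^m_{x_h}L^\infty_{x_3}}\le c2^{j/m}\|\dot\Delta_ju^3\|_{L^m} .
\]
Combining, the product of the two factors is $2^{j/m}r_k^{1/m}=2^{-(k-j)/m}$. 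This yields $c\,2^{-(k-j)/m}a_jr_k^{-2}M_k$, as required. One must check that Minkowski allows placing $L^\infty_{x_3}$ inside $L^m_{x_h}$ as in \eqref{eq:Bernstein-mixed}; it does, since the mixed norm there is exactly $L^m(\mathbb R^2_{x_h};L^\infty(\mathbb R_{x_3}))$.

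Summing the three regimes against $\mathcal K_{k-j}$ gives \eqref{eq:endpoint-flux-bound}, with a constant independent of $\tau$ and $n$.
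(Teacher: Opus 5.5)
Your proof is correct and follows the paper's argument: the same three-range split, Hölder with the global $a_j$ for $|j-k|<L$, thin-slab Hölder plus the mixed Bernstein inequality \eqref{eq:Bernstein-mixed} for $j\le k-L$, and transfer of the projector onto the localized flux followed by inverse Bernstein for $j\ge k+L$. Your low-frequency step puts the thinness on the $G_k$ side via $L^{m'}_{x_h}L^1_{x_3}$, whereas the paper puts it on $\dot\Delta_ju^3$ restricted to $S_{k-2}$; this is the same estimate written dually. The one point the paper makes explicit and you leave implicit is that the zero extension of $H_kW_{n,k}$ lies in $W^{1,m'}(\R^3)$ with the gradient you wrote. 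This holds because $W_{n,k}$ vanishes for $|x_3|\ge 2r_k$ while $S_{k-2}$ has half-width $8r_k$, which is exactly what the paper's buffer cutoff $\chi_k^{\mathrm{buf}}$ encodes.
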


\begin{proof}
Fix $\tau\in I_n$.  All space--time integrals in this proof are over $(-2,\tau)\times\R^3$.  We first replace $u^3$ by a finite Littlewood--Paley sum; all estimates below are independent of the truncation.

For $j\le k-L$, the support property $\operatorname{supp}W_{n,k}\subset\mathcal Q_{k-2}$, the vertical thickness of $S_{k-2}$, and the one-dimensional Bernstein inequality give
\begin{equation}\label{eq:endpoint-low-frequency-gain}
 \norm[L^p(I_{k-2};L^m(S_{k-2}))]{\DeltaDot_j u^3}
 \le c r_k^{1/m}2^{j/m}a_j
 \le c2^{-(k-j)/m}a_j.
\end{equation}
Consequently, H\"older's inequality, \eqref{eq:endpoint-H-bounds}, and
$\|W_{n,k}\|_{L^\infty((-2,0)\times\R^3)}\le cr_k^{-2}$ imply
\[
 \abs{\int \DeltaDot_j u^3H_kW_{n,k}}
 \le c2^{-(k-j)/m}a_jr_k^{-2}M_k.
\]
If $|j-k|<L$, the same argument uses the global definition of $a_j$ and gives
\[
 \abs{\int \DeltaDot_j u^3H_kW_{n,k}}
 \le ca_jr_k^{-2}M_k.
\]

It remains to treat $j\ge k+L$.
\begin{revision}
To avoid an unjustified zero extension of $H_k$, choose a vertical cutoff
\[
 \chi_k^{\mathrm{buf}}\in C_c^\infty((-2r_{k-2},2r_{k-2}))
\]
such that $\chi_k^{\mathrm{buf}}=1$ on the vertical projection of $\operatorname{supp}W_{n,k}$ and
\[
 |\partial_3\chi_k^{\mathrm{buf}}|\le Cr_k^{-1}.
\]
For the fixed terminal time $\tau$, define
\[
 F_{k,\tau}(x,t):=
 \mathbf1_{(-2,\tau)}(t)\,\chi_k^{\mathrm{buf}}(x_3)H_k(x,t)W_{n,k}(x_3,t)
\]
for $(x,t)\in S_{k-2}\times I_{k-2}$ and extend it by zero in the spatial variable outside $S_{k-2}$.  Because $\chi_k^{\mathrm{buf}}$ vanishes near the vertical boundary, this is a legitimate $W^{1,m'}(\mathbb R^3)$ extension for almost every time.  Moreover, $\chi_k^{\mathrm{buf}}=1$ on $\operatorname{supp}W_{n,k}$, so the original pairing is unchanged.  From \eqref{eq:endpoint-W-bounds} and \eqref{eq:endpoint-H-bounds},
\begin{align*}
 \|F_{k,\tau}\|_{L^{p'}(-2,0;L^{m'}(\mathbb R^3))}
 &\le Cr_k^{-2}M_k,\\
 \|\nabla F_{k,\tau}\|_{L^{p'}(-2,0;L^{m'}(\mathbb R^3))}
 &\le Cr_k^{-3}M_k.
\end{align*}
Indeed, the spatial gradient is the sum of
\[
 \chi_k^{\mathrm{buf}}(\nabla H_k)W_{n,k},\qquad
 \chi_k^{\mathrm{buf}} H_k\nabla W_{n,k},\qquad
 (\nabla\chi_k^{\mathrm{buf}})H_kW_{n,k},
\]
and each term has the stated bound.  The time indicator creates no spatial boundary term.
\end{revision}
Let $\widetilde\Delta_j$ be a real, even Fourier multiplier equal to one on the Fourier support of $\DeltaDot_j$, with symbol supported in an annulus comparable to $2^j$.  Since the symbol vanishes near the origin, the inverse Bernstein estimate gives, for almost every time,
\begin{equation*}
 \norm[L^{m'}(\R^3)]{\widetilde\Delta_jF_{k,\tau}(t)}
 \le c2^{-j}\norm[L^{m'}(\R^3)]{\nabla F_{k,\tau}(t)}.
\end{equation*}
Moreover, $\widetilde\Delta_j\DeltaDot_j=\DeltaDot_j$, and the real even symbol makes $\widetilde\Delta_j$ self-adjoint.  Hence
\[
 \int\DeltaDot_j u^3H_kW_{n,k}
 =\int\DeltaDot_j u^3F_{k,\tau}
 =\int\DeltaDot_j u^3\,\widetilde\Delta_jF_{k,\tau}.
\]
Using H\"older's inequality in time and space, the inverse Bernstein estimate above, and the gradient estimate for $F_{k,\tau}$ above, we obtain
\begin{align*}
 \abs{\int\DeltaDot_j u^3H_kW_{n,k}}
 &\le a_j
 \norm[L^{p'}(-2,0;L^{m'}(\R^3))]{\widetilde\Delta_jF_{k,\tau}}\\
 &\le ca_j2^{-j}r_k^{-3}M_k
 =c2^{-(j-k)}a_jr_k^{-2}M_k.
\end{align*}
Summing the low, neighboring, and high frequency ranges produces the kernel $\mathcal K_{k-j}$ in \eqref{eq:endpoint-kernel}, and hence gives \eqref{eq:endpoint-flux-bound}.  Finally, Lemma~\ref{lem:homogeneous-reconstruction} permits passage from the finite dyadic sums to the homogeneous reconstruction because $\sum_ja_j<\infty$.
\end{proof}

\begin{revision}
For each $k$, choose a cutoff $\chi_k$ as in Lemma~\ref{lem:endpoint-products}, with the fixed shift $c_*$, such that $\chi_k=1$ on a neighborhood of $\operatorname{supp}W_{n,k}$, and set
\[
 H_k:=\chi_k|u|^2.
\]
Applying Lemma~\ref{lem:endpoint-products} componentwise to the diagonal entries of $\chi_k u\otimes u$ gives
\[
 \norm[L^{p'}(-2,0;L^{m'}(\mathbb R^3))]{H_k}
 \le cr_kE_{k-c_*},
 \qquad
 \norm[L^{p'}(-2,0;L^{m'}(\mathbb R^3))]{\nabla H_k}
 \le cE_{k-c_*}.
\]
In particular, the same estimates hold after restriction to $I_{k-2}\times S_{k-2}$.  Since $\chi_k=1$ on $\operatorname{supp}W_{n,k}$,
\[
 H_kW_{n,k}=|u|^2W_{n,k}.
\]
Thus Lemma~\ref{lem:endpoint-flux} applies with $M_k=cr_kE_{k-c_*}$.  Since $\eta\partial_3\Phi_n=\sum_{k=0}^nW_{n,k}$, we obtain
\end{revision}
\begin{equation}\label{eq:endpoint-convection-flux}
 \sup_{\tau\in I_n}
 \abs{\int_{-2}^{\tau}\int_{\R^3}|u|^2u^3\eta\partial_3\Phi_n\dd x\dd t}
 \le c\sum_{k=0}^{n}B_ke_{k-c_*}.
\end{equation}

The pressure estimate uses harmonicity between the source and observation scales.

For $\rho>0$, write
\[
 \mathcal S_\rho:=\R^2\times(-\rho,\rho).
\]

\begin{lemma}[Harmonic observation on a thin slab]\label{lem:endpoint-harmonic-slab}
Let $0<\rho\le r/16$, and let $h\in L^{q_2}(\R^3)$ be harmonic in the open $r$-neighborhood of $\mathcal S_{4\rho}$.  For $a=0,1$,
\begin{equation}\label{eq:endpoint-harmonic-slab}
 \norm[L^{m'}(\mathcal S_{4\rho})]{\nabla^a h}
 \le c\rho^{1/m'}r^{2/m'-a-3/q_2}
 \norm[L^{q_2}(\R^3)]{h}.
\end{equation}
The constant depends only on $m'$, $q_2$, and the fixed relative size of the harmonicity neighborhood.
\end{lemma}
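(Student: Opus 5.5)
The estimate is local and translation-invariant in the horizontal variables, so we localize horizontally. We cover $\mathcal S_{4\rho}$ by the family of boxes
\[
 P_z:=\bigl(z+(-\rho,\rho)^2\bigr)\times(-4\rho,4\rho),\qquad z\in\rho\Z^2,
\]
and attach to each box the enlarged ball $D_z:=B_{r/2}((z,0))$. Since $\rho\le r/16$, each $P_z$ sits well inside $D_z$, and $D_z$ is contained in the $r$-neighborhood of $\mathcal S_{4\rho}$, where $h$ is harmonic. The plan is to bound $\nabla^a h$ pointwise on $P_z$ by an average of $|h|$ over $D_z$. We then sum the resulting $L^{m'}$ bounds over $z$. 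The thinness of the slab supplies the factor $\rho^{1/m'}$, and the sum over $z$ is controlled by the global $L^{q_2}$ norm through a bounded-overlap argument.

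\emph{Step 1: interior pointwise bound.} For $x\in P_z$, the ball $B_{r/4}(x)$ lies inside the harmonicity region. The mean value property and Cauchy's derivative estimate give
\[
 |\nabla^a h(x)|\le c\,r^{-a}\,r^{-3}\!\int_{B_{r/4}(x)}|h|
 \le c\,r^{-a-3/q_2}\norm[L^{q_2}(B_{r/4}(x))]{h}.
\]
The second inequality is H\"older's. Thus on $P_z$ we have $|\nabla^a h|\le c\,r^{-a-3/q_2}\norm[L^{q_2}(D_z)]{h}$. Integrating over $P_z$, whose volume is $c\rho^3$, yields
\[
 \norm[L^{m'}(P_z)]{\nabla^a h}\le c\rho^{3/m'}r^{-a-3/q_2}\norm[L^{q_2}(D_z)]{h}.
\]

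\emph{Step 2: summation.} This is where the exponent bookkeeping matters. Summing the $m'$-th powers directly over the roughly $(r/\rho)^2$ overlapping balls $D_z$ loses too much. Instead we regroup. Tile $\R^2$ into squares $T$ of side $r$. For each $T$, the union of the $P_z$ with $z\in T$ is the set $T\times(-4\rho,4\rho)$, which contains at most $c(r/\rho)^2$ boxes, all sharing essentially the same enlarged ball $D_T$ of radius comparable to $r$. Applying the pointwise bound uniformly on $T\times(-4\rho,4\rho)$, whose volume is $c r^2\rho$, gives
\[
 \norm[L^{m'}(T\times(-4\rho,4\rho))]{\nabla^a h}\le c(r^2\rho)^{1/m'}r^{-a-3/q_2}\norm[L^{q_2}(D_T)]{h}.
\]
Since the balls $D_T$ have bounded overlap, raising to the power $m'$ and summing over $T$ gives
\[
 \Bigl(\sum_T\norm[L^{q_2}(D_T)]{h}^{m'}\Bigr)^{1/m'}\le c\norm[L^{q_2}]{h}.
\]
This step uses $q_2<m'$, established after \eqref{eq:endpoint-p2-q2}, so that $\ell^{q_2}\subset\ell^{m'}$. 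Combining,
\[
 \norm[L^{m'}(\mathcal S_{4\rho})]{\nabla^a h}\le c\rho^{1/m'}r^{2/m'-a-3/q_2}\norm[L^{q_2}(\R^3)]{h},
\]
which is \eqref{eq:endpoint-harmonic-slab}.

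The main obstacle, and the only nontrivial point, is Step 2: we must organize the cover at horizontal scale $r$ rather than $\rho$. The cover at scale $\rho$ is only an aid to thinking and can be dropped, using the $r$-tiles $T$ directly. Using the $r$-scale also forces the constant to depend on the relative size of the harmonicity neighborhood, as the statement allows. The constants depend only on $m'$, $q_2$, and the fixed ratio between $r$ and the harmonic neighborhood.
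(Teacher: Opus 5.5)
Your proposal is correct and is essentially the paper's proof. Once the superfluous $\rho$-scale boxes are dropped, you cover $\mathcal S_{4\rho}$ by columns of horizontal width $\sim r$ and bound $\nabla^a h$ pointwise on each column by $c\,r^{-a-3/q_2}\|h\|_{L^{q_2}}$ over a ball of radius $\sim r$, using the interior harmonic estimate. You then gain $(r^2\rho)^{1/m'}$ from the column volume and sum over columns using bounded overlap together with $\ell^{q_2}\hookrightarrow\ell^{m'}$ (valid since $q_2<m'$), exactly as the paper does.
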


\begin{proof}
Choose fixed constants $0<c_1<c_2<1/2$ so that, whenever $\rho\le r/16$, a horizontal disk of radius $c_1r$ together with the vertical interval $(-4\rho,4\rho)$ is contained in a three-dimensional ball of radius $c_2r$.  Cover $\R^2$ by disks
\[
 D_\alpha:=B_h(x_\alpha',c_1r)
\]
so that every fixed enlargement of the covering has bounded overlap, and set
\[
 B_\alpha:=B((x_\alpha',0),c_2r).
\]
The constants $c_1,c_2$ may be chosen so that
\begin{equation*}
 D_\alpha\times(-4\rho,4\rho)\subset B_\alpha,
 \qquad
 2B_\alpha\subset
 \{x:\operatorname{dist}(x,\mathcal S_{4\rho})<r\}.
\end{equation*}
Thus $h$ is harmonic in $2B_\alpha$ for every $\alpha$.

For $a=0,1$, the interior $L^{q_2}$ estimate for harmonic functions, applied from $2B_\alpha$ to $B_\alpha$, gives
\begin{equation*}
 \norm[L^\infty(D_\alpha\times(-4\rho,4\rho))]{\nabla^a h}
 \le cr^{-a-3/q_2}\norm[L^{q_2}(2B_\alpha)]{h}.
\end{equation*}
Since each box $D_\alpha\times(-4\rho,4\rho)$ has volume at most $cr^2\rho$, the covering and the local harmonic estimate above imply
\begin{align*}
 \norm[L^{m'}(\mathcal S_{4\rho})]{\nabla^a h}
 &\le c\rho^{1/m'}r^{2/m'-a-3/q_2}
 \left(\sum_\alpha
 \norm[L^{q_2}(2B_\alpha)]{h}^{m'}\right)^{1/m'}.
\end{align*}
Because $q_2\le m'$, the embedding $\ell^{q_2}\hookrightarrow\ell^{m'}$ gives
\begin{align*}
 \left(\sum_\alpha
 \norm[L^{q_2}(2B_\alpha)]{h}^{m'}\right)^{1/m'}
 &\le
 \left(\sum_\alpha
 \norm[L^{q_2}(2B_\alpha)]{h}^{q_2}\right)^{1/q_2}\\
 &=\left(\sum_\alpha\int_{2B_\alpha}|h|^{q_2}\dd x\right)^{1/q_2}\\
 &\le c\norm[L^{q_2}(\R^3)]{h},
\end{align*}
where the last step uses the bounded overlap of the enlarged balls.  Substitution into the preceding estimate proves \eqref{eq:endpoint-harmonic-slab} for $a=0$ and $a=1$.
\end{proof}

For $0\le j\le n$, define
\begin{equation}\label{eq:endpoint-pressure-pieces}
 F_{n,j}^{ab}:=u^au^b\vartheta_{n,j},
 \qquad
 P_{n,j}:=\sum_{a,b=1}^3\mathcal R_a\mathcal R_b(F_{n,j}^{ab}).
\end{equation}
By \eqref{eq:endpoint-pressure-normalization} and \eqref{eq:endpoint-source-partition},
\begin{equation}\label{eq:endpoint-pressure-partition}
 \pi=\sum_{j=0}^{n}P_{n,j}.
\end{equation}

\begin{lemma}[Separated endpoint pressure source]\label{lem:endpoint-separated-pressure}
Assume $k\ge L+2$ and $0\le j\le k-L-1$.  Put
\begin{equation}\label{eq:endpoint-Ajk}
 A_{j,k}:=
 r_k^{1/m'}r_j^{2/m'-3/q_2}
 r_k^{2/p'-2/p_2}E_{j-c_*}.
\end{equation}
Then
\begin{align}
 \norm[L^{p'}(I_{k-2};L^{m'}(S_{k-2}))]{P_{n,j}}
 &\le cA_{j,k},
 \label{eq:endpoint-separated-zero}\\
 \norm[L^{p'}(I_{k-2};L^{m'}(S_{k-2}))]{\nabla P_{n,j}}
 &\le cr_j^{-1}A_{j,k}
 \le cr_k^{-1}A_{j,k}.
 \label{eq:endpoint-separated-one}
\end{align}
Moreover,
\begin{equation}\label{eq:endpoint-separated-scale}
 r_k^{-2}A_{j,k}
 =r_j^{-1}\left(\frac{r_k}{r_j}\right)^\gamma E_{j-c_*}
 \le c2^{-\gamma(k-j)}e_{j-c_*}.
\end{equation}
\end{lemma}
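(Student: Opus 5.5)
The plan is to use the separation \eqref{eq:endpoint-separation} and treat $P_{n,j}$ on the thin slab $S_{k-2}$ as a harmonic function observed away from its source, via Lemma~\ref{lem:endpoint-harmonic-slab}. Fix $t\in I_{k-2}$.

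\emph{Harmonicity.} By Lemma~\ref{lem:endpoint-cutoff-geometry}, the source $F^{ab}_{n,j}(\cdot,t)$ is supported at vertical distance at least $c r_j$ from $S_{k-2}$. Since $-\Delta P_{n,j}=\sum\partial_a\partial_b F^{ab}_{n,j}$, the function $P_{n,j}(\cdot,t)$ is harmonic in the open $c r_j$-neighbourhood of $S_{k-2}$. Take $\rho=8r_k$, so that $S_{k-2}=\mathcal S_\rho$, and $r=c r_j$. Then $\rho\le r/16$ once $L$ is large, because $r_k/r_j\le 2^{-L-1}$. Strictly, harmonicity is needed on the $r$-neighbourhood of $\mathcal S_{4\rho}$; this is harmless, since it only shrinks $r$ by a fixed factor after enlarging $L$.

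\emph{Pointwise-in-time bound.} Lemma~\ref{lem:endpoint-harmonic-slab} gives, for $a=0,1$,
\[
 \|\nabla^aP_{n,j}(t)\|_{L^{m'}(S_{k-2})}\le c\,r_k^{1/m'}r_j^{2/m'-a-3/q_2}\|P_{n,j}(t)\|_{L^{q_2}}.
\]
Calder\'on--Zygmund then gives $\|P_{n,j}(t)\|_{L^{q_2}}\le c\|u(t)\|^2_{L^{2q_2}(\operatorname{supp}\vartheta_{n,j}(\cdot,t))}$.

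\emph{Time integration.} Take the $L^{p'}(I_{k-2})$ norm and apply H\"older in time from $L^{p'}$ to $L^{p_2}$. Since $|I_{k-2}|\le c r_k^2$, this yields the factor $r_k^{2/p'-2/p_2}$; here $p_2>m>p'$, so $p_2>p'$ and H\"older applies. It remains to show
\[
 \|u\|^2_{L^{2p_2}(I_{k-2};L^{2q_2}(\operatorname{supp}\vartheta_{n,j}))}\le cE_{j-c_*}.
\]
The exponents satisfy $2/(2p_2)+3/(2q_2)=\tfrac12(2/p_2+3/q_2)=3/2$ and $2q_2\in(2,6)$, so this is admissible for \eqref{eq:endpoint-slab-interpolation}. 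For $j\ge1$, $\operatorname{supp}\vartheta_{n,j}\subset\operatorname{supp}\xi_j$, which lies in $S_{j-3}\times I_{j-c}$; hence \eqref{eq:endpoint-slab-interpolation} at scale $j$ with the fixed shift $c_*$ gives $cE_{j-c_*}$. For $j=0$, the whole-space energy interpolation together with the convention $E_{j-c_*}=E_{\mathrm g}$ gives the same bound. The time interval is harmless, since $I_{k-2}$ is contained in the relevant $I_{j-c}$. This proves \eqref{eq:endpoint-separated-zero} with the gradient factor $r_j^{-1}$ in \eqref{eq:endpoint-separated-one}. The second inequality there follows from $r_j^{-1}\le r_k^{-1}$.

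\emph{The scale identity \eqref{eq:endpoint-separated-scale}.} First compute
\[
 r_k^{-2}A_{j,k}=r_k^{1/m'+2/p'-2/p_2-2}\,r_j^{2/m'-3/q_2}E_{j-c_*}.
\]
Using $2/p'+1/m'=3-(2/p+1/m)$ and $2/p=1-3/m$, one gets $1/m'+2/p'-2=2/m$, so the $r_k$ exponent equals $\gamma$. Using $3/q_2=3-2/p_2$, the $r_j$ exponent is $2-2/m-3+2/p_2=-1-\gamma$. This gives the identity. Finally, $r_j^{-1}E_{j-c_*}\le 2^{c_*}e_{j-c_*}$ by \eqref{eq:endpoint-negative-index-bound}.

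The main obstacle is the geometric step: keeping track of the fixed constants in the harmonic neighbourhood and in the source-support containment, so that both Lemma~\ref{lem:endpoint-harmonic-slab} and the energy bound at scale $j-c_*$ apply. Both are resolved by the freedom to enlarge $L$ and $c_*$.
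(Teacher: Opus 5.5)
Your proposal is correct and follows essentially the same route as the paper. It uses harmonicity of $P_{n,j}(\cdot,t)$ from the separation \eqref{eq:endpoint-separation}, Lemma~\ref{lem:endpoint-harmonic-slab} at $r\sim r_j$, Calder\'on--Zygmund in $L^{q_2}$, the energy-line pair $(2p_2,2q_2)$ with H\"older in time, and the same exponent identities. The differences are cosmetic: the paper takes $\rho=2r_k$ so that $\mathcal S_{4\rho}=S_{k-2}$ exactly, which makes your remark about the larger slab $\mathcal S_{32r_k}$ unnecessary, and the source bound reads more cleanly as an application of \eqref{eq:endpoint-slab-interpolation} on $\mathcal Q_{j-c_*}\supset\operatorname{supp}\xi_j$ (with the whole-space bound for all $0\le j<c_*$, not only $j=0$) rather than "at scale $j$".
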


\begin{proof}
By Lemma~\ref{lem:endpoint-cutoff-geometry}, the source of $P_{n,j}(\cdot,t)$ is separated from $S_{k-2}$ by a distance comparable to $r_j$.  Hence $P_{n,j}(\cdot,t)$ is harmonic in a fixed $r_j$-neighborhood of the observation slab.

We first estimate the source.  Since
\[
 \frac2{2p_2}+\frac3{2q_2}=\frac32,
\]
the pair $(2p_2,2q_2)$ lies on the energy line.  For $j\ge c_*$, the support of $\vartheta_{n,j}$ is contained in a fixed enlargement of $\mathcal Q_j$, which is contained in $\mathcal Q_{j-c_*}$ by the choice of $c_*$.  Therefore $|\vartheta_{n,j}|\le1$ and Lemma~\ref{lem:endpoint-products} give
\begin{align*}
 \norm[L^{p_2}(I_{j-c_*};L^{q_2}(\R^3))]{F_{n,j}}
 &\le c\norm[L^{2p_2}(I_{j-c_*};L^{2q_2}(S_{j-c_*}))]{u}^2\\
 &\le cE_{j-c_*}.
\end{align*}
For the finitely many indices $0\le j<c_*$, the same estimate holds on $(-2,0)$ with the right-hand side $cE_{\mathrm g}=cE_{j-c_*}$, by whole-space energy interpolation.  In either case the relevant time interval contains $I_{k-2}$.  Since $p_2>p'$ and $|I_{k-2}|\le cr_k^2$,
\[
 \norm[L^{p'}(I_{k-2};L^{q_2}(\R^3))]{F_{n,j}}
 \le cr_k^{2/p'-2/p_2}E_{j-c_*}.
\]
Calder\'on--Zygmund boundedness transfers this estimate to $P_{n,j}$ in $L^{q_2}(\R^3)$.  Choose $c_0>0$ from the separation estimate \eqref{eq:endpoint-separation}.  Lemma~\ref{lem:endpoint-harmonic-slab}, with $\rho=2r_k$ and $r=c_0r_j/2$, applies once $L$ is fixed sufficiently large; indeed, $\mathcal S_{4\rho}=S_{k-2}$ and $\rho\le r/16$.  It proves \eqref{eq:endpoint-separated-zero}--\eqref{eq:endpoint-separated-one}.

The exponent identities
\[
 -2+\frac1{m'}+\frac2{p'}-\frac2{p_2}=\gamma,
 \qquad
 \frac2{m'}-\frac3{q_2}=-1-\gamma
\]
give the equality in \eqref{eq:endpoint-separated-scale}.  Its final inequality follows from \eqref{eq:endpoint-negative-index-bound} and $r_k/r_j=2^{-(k-j)}$.
\end{proof}

For each observation scale, set
\begin{equation}\label{eq:endpoint-near-pressure}
 \Pi_k:=
 \begin{cases}
  \pi,&0\le k\le L+1,\\
  \displaystyle\sum_{a,b=1}^3\mathcal R_a\mathcal R_b
  (u^au^b\xi_{k-L}),&k\ge L+2.
 \end{cases}
\end{equation}
The telescoping identity \eqref{eq:endpoint-source-telescope} gives, for $k\ge L+2$,
\begin{equation}\label{eq:endpoint-observation-pressure-decomposition}
 \pi=\Pi_k+\sum_{j=0}^{k-L-1}P_{n,j}
\end{equation}
on the observation scale $k$.

\begin{lemma}[Grouped endpoint pressure]\label{lem:endpoint-grouped-pressure}
For every $0\le k\le n$,
\begin{align}
 \norm[L^{p'}(I_{k-2};L^{m'}(S_{k-2}))]{\Pi_k}
 &\le cr_kE_{k-c_*},
 \label{eq:endpoint-near-pressure-zero}\\
 \norm[L^{p'}(I_{k-2};L^{m'}(S_{k-2}))]{\nabla\Pi_k}
 &\le cE_{k-c_*}.
 \label{eq:endpoint-near-pressure-one}
\end{align}
\end{lemma}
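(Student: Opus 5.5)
We split into the two cases of the definition \eqref{eq:endpoint-near-pressure}. For $k\ge L+2$, $\Pi_k$ is the Riesz image of the localized tensor $u^au^b\xi_{k-L}$. For $0\le k\le L+1$, it is the full pressure.

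\emph{Case $k\ge L+2$.} The cutoff $\xi_{k-L}(x_3,t)=\zeta^\sharp(x_3/r_{k-L})\rho^\sharp(-t/r_{k-L}^2)$ satisfies $0\le\xi_{k-L}\le1$. Its support lies in $|x_3|<8r_{k-L}$, $-t<32r_{k-L}^2$, and $|\partial_3\xi_{k-L}|\le cr_{k-L}^{-1}=c2^{L}r_k^{-1}$. Since $L$ is absolute and $c_*\ge L+4$ was chosen to absorb every fixed enlargement, this support is contained in $\mathcal Q_{k-c_*}$ (after intersecting with $(-2,0)$; for small $k-c_*<0$ the convention $E_{k-c_*}=E_{\mathrm g}$ is used). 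So $\chi_k:=\xi_{k-L}$ is an admissible cutoff in Lemma~\ref{lem:endpoint-products} with shift $c_0=c_*$, up to a harmless constant in the derivative bound. Lemma~\ref{lem:endpoint-products} then gives
\[
\|u^au^b\xi_{k-L}\|_{L^{p'}(-2,0;L^{m'}(\R^3))}\le cr_kE_{k-c_*},\qquad
\|\nabla(u^au^b\xi_{k-L})\|_{L^{p'}(-2,0;L^{m'}(\R^3))}\le cE_{k-c_*}.
\]
Because $1<m'<3/2<\infty$, the operators $\mathcal R_a\mathcal R_b$ are bounded on $L^{m'}(\R^3)$ and commute with $\nabla$. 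Applying them for a.e.\ $t$, taking the $L^{p'}$ norm in time, and summing over $a,b$ gives both bounds on the whole space. Restricting to $I_{k-2}\times S_{k-2}$ yields \eqref{eq:endpoint-near-pressure-zero}--\eqref{eq:endpoint-near-pressure-one}.

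\emph{Case $0\le k\le L+1$.} This is a finite set of indices, so $r_k\ge 2^{-L-1}$ is bounded below by an absolute constant. Moreover $k-c_*<0$, hence $E_{k-c_*}=E_{\mathrm g}$. Here $\Pi_k=\pi=\sum\mathcal R_a\mathcal R_b(u^au^b)$ by \eqref{eq:endpoint-pressure-normalization}. The unlocalized part of Lemma~\ref{lem:endpoint-products} gives $\|u\otimes u\|_{L^{p'}(I_k;L^{m'})}\le cr_kE_{\mathrm g}$ and $\|\nabla(u\otimes u)\|_{L^{p'}(I_k;L^{m'})}\le cE_{\mathrm g}$ on whole-space time intervals. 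These bounds must be applied on $I_{k-2}$, which for $k\le 1$ is all of $(-2,0)$. That is fine, because the whole-space interpolation estimates on $(-2,0)$ hold with constants that are harmless for finitely many $k$. Calder\'on--Zygmund boundedness again transfers the estimates to $\pi$ and $\nabla\pi$, and we restrict to the slab. Any power of $r_k$ that appears is bounded above and below by constants depending only on $L$, so it is absorbed.

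\emph{Main obstacle.} The delicate point is the support inclusion in the first case. We must verify that the fixed enlargement factor $2^L$, together with the extra factors $8$ and $32$ from $\zeta^\sharp,\rho^\sharp$, is dominated by the shift $c_*$. Concretely, we need $8r_{k-L}\le 2r_{k-c_*}$ and $32r_{k-L}^2\le 2r_{k-c_*}^2$, i.e.\ $c_*\ge L+2$. This holds because $c_*\ge L+4$ and $c_*$ is allowed to be enlarged further. When $k-c_*<0$, both the time truncation to $(-2,0)$ and the global energy convention take care of the boundary cases.
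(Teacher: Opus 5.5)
Your proof is correct and follows the paper's argument. The paper also splits at $k\le L+1$ (whole-space Calder\'on--Zygmund, $r_k\ge 2^{-L-1}$, and $E_{k-c_*}=E_{\mathrm g}$) versus $k\ge L+2$ (Lemma~\ref{lem:endpoint-products} applied to $u\otimes u\,\xi_{k-L}$, then Riesz boundedness and restriction to the slab). The only cosmetic difference is indexing: the paper applies Lemma~\ref{lem:endpoint-products} at scale $k-L$ with shift $c_*-L$ and then uses $r_{k-L}=2^Lr_k$, whereas you apply it at scale $k$ with shift $c_*$ and absorb the factor $2^L$ into the cutoff's derivative bound, which is equivalent.
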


\begin{proof}
We distinguish the finitely many outer observation scales from the genuinely localized scales.

Suppose first that $0\le k\le L+1$.  Then $\Pi_k=\pi$, and $k-c_*<0$ because $c_*\ge L+4$.  By the pressure normalization \eqref{eq:endpoint-pressure-normalization}, Calder\'on--Zygmund boundedness, and the whole-space form of Lemma~\ref{lem:endpoint-products},
\begin{align*}
 \norm[L^{p'}(I_{k-2};L^{m'}(S_{k-2}))]{\Pi_k}
 &\le \norm[L^{p'}(I_{k-2};L^{m'}(\R^3))]{\pi}\\
 &\le c\norm[L^{p'}(I_{k-2};L^{m'}(\R^3))]{u\otimes u}
 \le c_LE_{\mathrm g}.
\end{align*}
Since $r_k\ge2^{-(L+1)}$ and $E_{k-c_*}=E_{\mathrm g}$, the last term is bounded by $c r_kE_{k-c_*}$ after changing the fixed constant.  Similarly,
\begin{align*}
 \norm[L^{p'}(I_{k-2};L^{m'}(S_{k-2}))]{\nabla\Pi_k}
 &\le c\norm[L^{p'}(I_{k-2};L^{m'}(\R^3))]{\nabla(u\otimes u)}
 \le cE_{\mathrm g}=cE_{k-c_*}.
\end{align*}
This proves both estimates for $k\le L+1$.

Let now $k\ge L+2$.  In this case
\[
 \Pi_k=\sum_{a,b=1}^3\mathcal R_a\mathcal R_b
 (u^au^b\xi_{k-L}).
\]
The cutoff $\xi_{k-L}$ is supported in a fixed enlargement of the slab at scale $r_{k-L}$ and satisfies
\[
 |\partial_3\xi_{k-L}|\le cr_{k-L}^{-1}.
\]
Apply Lemma~\ref{lem:endpoint-products} at the scale $k-L$ with the fixed shift $c_*-L$.  Since $c_*\ge L+4$, this shift is nonnegative and
\[
 (k-L)-(c_*-L)=k-c_*.
\]
Hence
\begin{align*}
 \norm[L^{p'}(-2,0;L^{m'}(\R^3))]{u\otimes u\,\xi_{k-L}}
 &\le cr_{k-L}E_{k-c_*},\\
 \norm[L^{p'}(-2,0;L^{m'}(\R^3))]{\nabla(u\otimes u\,\xi_{k-L})}
 &\le cE_{k-c_*}.
\end{align*}
Riesz transforms are bounded on $L^{m'}(\R^3)$ and commute with spatial derivatives.  Restricting the resulting pressure estimates to $I_{k-2}\times S_{k-2}$ and using $r_{k-L}=2^Lr_k$, with $L$ fixed, gives
\begin{align*}
 \norm[L^{p'}(I_{k-2};L^{m'}(S_{k-2}))]{\Pi_k}
 &\le cr_kE_{k-c_*},\\
 \norm[L^{p'}(I_{k-2};L^{m'}(S_{k-2}))]{\nabla\Pi_k}
 &\le cE_{k-c_*}.
\end{align*}
This is \eqref{eq:endpoint-near-pressure-zero}--\eqref{eq:endpoint-near-pressure-one}.
\end{proof}

\begin{proposition}[Endpoint pressure flux]\label{prop:endpoint-pressure-flux}
For every $n\ge1$,
\begin{equation}\label{eq:endpoint-pressure-flux}
 \sup_{\tau\in I_n}
 \abs{\int_{-2}^{\tau}\int_{\R^3}\pi u^3\eta\partial_3\Phi_n\dd x\dd t}
 \le C+c\sum_{j=0}^{n}
 \bigl(B_{j+c_*}+\widehat B_{j+c_*}\bigr)e_j,
\end{equation}
where
\[
 C=C\!\left(p,m,p_2,E_{\mathrm g},
 \|u^3\|_{\widetilde L^p(-2,0;\dot B^0_{m,1})}\right)
\]
is independent of $n$ and $\tau$; the fixed cutoffs are suppressed from the notation.  The coefficient sequence on the right belongs to $\ell^1$ and has vanishing tails.
\end{proposition}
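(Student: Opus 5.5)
Decompose the integrand along the observation partition: since $\eta\partial_3\Phi_n=\sum_{k=0}^nW_{n,k}$, write
\[
 \int_{-2}^{\tau}\int_{\R^3}\pi u^3\eta\partial_3\Phi_n
 =\sum_{k=0}^n\int_{-2}^{\tau}\int_{\R^3}\pi u^3W_{n,k},
\]
and estimate each observation scale $k$ separately. By Lemma~\ref{lem:endpoint-cutoff-geometry}, $W_{n,k}$ is supported in $\mathcal Q_{k-2}$, so only the values of $\pi$ on $I_{k-2}\times S_{k-2}$ matter. On this set, apply the scale-dependent splitting \eqref{eq:endpoint-observation-pressure-decomposition}: $\pi=\Pi_k+\sum_{j=0}^{k-L-1}P_{n,j}$ for $k\ge L+2$, and $\pi=\Pi_k$ for $k\le L+1$.

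\emph{Near pressure.} Take $H_k:=\Pi_k$ restricted to $\mathcal Q_{k-2}$. Lemma~\ref{lem:endpoint-grouped-pressure} gives \eqref{eq:endpoint-H-bounds} with $M_k=cr_kE_{k-c_*}$. Lemma~\ref{lem:endpoint-flux} then bounds the $k$-th near contribution by
\[
 cB_kr_k^{-2}r_kE_{k-c_*}=cB_k r_k^{-1}E_{k-c_*}\le c2^{c_*}B_ke_{k-c_*},
\]
using \eqref{eq:endpoint-negative-index-bound}. For the finitely many $k<c_*$, where $e_{k-c_*}$ is by convention the global energy, we have $B_k\le c\sum a_j$ and $r_k^{-1}E_{\mathrm g}\le c E_{\mathrm g}$. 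These terms therefore contribute a constant $C$ depending on $E_{\mathrm g}$ and the Besov norm. Reindexing $j=k-c_*\ge0$ produces $\sum_j B_{j+c_*}e_j$.

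\emph{Separated pressure.} For $k\ge L+2$ and $0\le j\le k-L-1$, take $H_k:=P_{n,j}$. Lemma~\ref{lem:endpoint-separated-pressure} gives \eqref{eq:endpoint-H-bounds} with $M_k=cA_{j,k}$, since \eqref{eq:endpoint-separated-one} supplies exactly the factor $r_k^{-1}$. Lemma~\ref{lem:endpoint-flux} then gives the bound
\[
 cB_kr_k^{-2}A_{j,k}\le cB_k2^{-\gamma(k-j)}e_{j-c_*}.
\]
Summing over $k$ and $j$ and interchanging the order of summation (Tonelli, all terms nonnegative) yields
\[
 \sum_{j'}e_{j'}\sum_{k\ge j'+c_*}2^{-\gamma(k-j'-c_*)}B_k\cdot2^{-\gamma c_*}
 \le c\sum_{j'}\widehat B_{j'+c_*}e_{j'},
\]
with $j'=j-c_*$, by the definition \eqref{eq:endpoint-gamma-envelope}. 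Indices with $j<c_*$ again carry $e_{j-c_*}=E_{\mathrm g}$. Their total is bounded by $E_{\mathrm g}\sum_k B_k$, which is finite by \eqref{eq:endpoint-summability}, and is absorbed into $C$. The $\ell^1$ property and the vanishing tails of the coefficient sequence $B_{j+c_*}+\widehat B_{j+c_*}$ follow directly from \eqref{eq:endpoint-summability}.

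\emph{Main obstacle: the separated-source exchange of summations.} The bound is not hard; the difficulty is bookkeeping. One must check that each $P_{n,j}$ with $j\le k-L-1$ is genuinely harmonic near $S_{k-2}$ for every $t\in I_{k-2}$, including the time cutoff. This is where \eqref{eq:endpoint-separation} and the fact that the temporal factors of $\xi_j$ equal one are used. One must also check that the uniform-in-$\tau$ statement survives. It does, because Lemma~\ref{lem:endpoint-flux} is uniform in $\tau$ and each pressure piece is handled linearly. A further point to verify is that $P_{n,j}$ lies in the class required by Lemma~\ref{lem:endpoint-flux} on $\mathcal Q_{k-2}$, in particular that its gradient is in $L^{p'}L^{m'}$ there. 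This holds by Lemma~\ref{lem:endpoint-harmonic-slab}.
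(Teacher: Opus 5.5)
Your proposal is correct and follows the paper's proof essentially step by step: the same splitting $\eta\partial_3\Phi_n=\sum_k W_{n,k}$, the same near/separated pressure decomposition, Lemma~\ref{lem:endpoint-flux} applied with $M_k=cr_kE_{k-c_*}$ and $M_k=cA_{j,k}$, and the same Tonelli exchange producing $\widehat B$, with the indices below $c_*$ absorbed into $C$. The only slip is the spurious extra factor $2^{-\gamma c_*}$ in your reindexed display. With $j=j'+c_*$ one has exactly $2^{-\gamma(k-j)}=2^{-\gamma(k-j'-c_*)}$, and dropping the factor gives precisely $\widehat B_{j'+c_*}$, so the bound is unaffected.
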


\begin{proof}
Fix $\tau\in I_n$.  All integrals without displayed limits in this proof are over $(-2,\tau)\times\R^3$.  Since
\[
 \eta\partial_3\Phi_n=\sum_{k=0}^nW_{n,k},
\]
we decompose the pressure flux according to the observation scale.  For $0\le k\le L+1$, the definition gives $\Pi_k=\pi$.  For $k\ge L+2$, the exact source decomposition \eqref{eq:endpoint-observation-pressure-decomposition} yields
\[
 \pi=\Pi_k+\sum_{j=0}^{k-L-1}P_{n,j}
 \quad\text{on }\operatorname{supp}W_{n,k}.
\]
Consequently,
\begin{align*}
 \int\pi u^3\eta\partial_3\Phi_n
 &=\sum_{k=0}^n\int u^3\Pi_kW_{n,k}\\
 &\quad+\sum_{k=L+2}^n\sum_{j=0}^{k-L-1}
 \int u^3P_{n,j}W_{n,k}.
\end{align*}

For the grouped pressure, Lemma~\ref{lem:endpoint-grouped-pressure} verifies the hypotheses of Lemma~\ref{lem:endpoint-flux} with
\[
 H_k=\Pi_k,
 \qquad M_k=cr_kE_{k-c_*}.
\]
Therefore, by \eqref{eq:endpoint-negative-index-bound},
\begin{align*}
 \abs{\int u^3\Pi_kW_{n,k}}
 &\le cB_kr_k^{-2}(r_kE_{k-c_*})\\
 &\le cB_ke_{k-c_*}.
\end{align*}

For a separated source $0\le j\le k-L-1$, Lemma~\ref{lem:endpoint-separated-pressure} verifies the same flux-pairing hypotheses with
\[
 H_k=P_{n,j},
 \qquad M_k=cA_{j,k}.
\]
Using \eqref{eq:endpoint-separated-scale}, we obtain
\begin{align*}
 \abs{\int u^3P_{n,j}W_{n,k}}
 &\le cB_kr_k^{-2}A_{j,k}\\
 &\le c2^{-\gamma(k-j)}B_ke_{j-c_*}.
\end{align*}
Inserting the grouped-pressure estimate above and
the separated-source estimate above into
the decomposition above gives
\begin{align*}
 \abs{\int\pi u^3\eta\partial_3\Phi_n}
 &\le c\sum_{k=0}^{n}B_ke_{k-c_*}\\
 &\quad+c\sum_{k=L+2}^{n}\sum_{j=0}^{k-L-1}
 2^{-\gamma(k-j)}B_ke_{j-c_*}.
\end{align*}
Since all terms are nonnegative, the second sum may be enlarged and its order reversed:
\begin{align*}
 \sum_{k=L+2}^{n}\sum_{j=0}^{k-L-1}
 2^{-\gamma(k-j)}B_ke_{j-c_*}
 &\le\sum_{j=0}^{n}e_{j-c_*}
 \sum_{k\ge j}2^{-\gamma(k-j)}B_k\\
 &=\sum_{j=0}^{n}\widehat B_je_{j-c_*}.
\end{align*}
Thus
\begin{equation*}
 \abs{\int\pi u^3\eta\partial_3\Phi_n}
 \le c\sum_{j=0}^{n}(B_j+\widehat B_j)e_{j-c_*}.
\end{equation*}
For $0\le j<c_*$, the negative-index convention gives $e_{j-c_*}=E_{\mathrm g}$.  The corresponding finite sum is bounded by
\[
 cE_{\mathrm g}\sum_{j=0}^{c_*-1}(B_j+\widehat B_j),
\]
which is a constant independent of $n$ and $\tau$.  For $j\ge c_*$, set $i=j-c_*$.  Enlarging the finite upper limit if necessary gives
\[
 \sum_{j=c_*}^{n}(B_j+\widehat B_j)e_{j-c_*}
 \le\sum_{i=0}^{n}(B_{i+c_*}+\widehat B_{i+c_*})e_i.
\]
All bounds above are independent of $\tau\in I_n$.  Taking the supremum in $\tau$ proves \eqref{eq:endpoint-pressure-flux}.  Finally,
\eqref{eq:endpoint-summability} shows that the coefficient sequence belongs to $\ell^1$ and has vanishing tails.
\end{proof}

In related one-component local-energy arguments, the backward heat-kernel test is usually handled inside the main energy estimate; see, for example, \cite[Section~3]{ChaeWolf2021}.  Here the observation slab is unbounded in the horizontal variables, so the approximation by compactly supported tests must be justified separately.  The next two lemmas have distinct roles: Lemma~\ref{lem:endpoint-horizontal-cutoff} establishes admissibility of the full-slab test, whereas Lemma~\ref{lem:endpoint-coercivity} extracts the scale-invariant energy from its left-hand side.

\begin{lemma}[Horizontal cutoff limit]\label{lem:endpoint-horizontal-cutoff}
For every fixed $n$, the local energy inequality may be tested with $\Phi_n\eta$.  More precisely, if $\psi_R(x_h)$ is a standard radial cutoff that equals one on $B_h(0,R)$ and vanishes outside $B_h(0,2R)$, then the local energy inequality with $\Phi_n\eta\psi_R$ passes to the limit as $R\to\infty$.
\end{lemma}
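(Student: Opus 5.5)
The plan is to apply the local energy inequality \eqref{eq:LEI} with the compactly supported test function $\phi_R:=\Phi_n\eta\psi_R$. We then let $R\to\infty$ term by term, using dominated convergence on the terms that involve $\psi_R$ itself and showing that every term containing a derivative of $\psi_R$ vanishes.

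First, $\phi_R$ is nonnegative, smooth, and compactly supported in $\R^3\times[-2,0)$, because $\eta$ is supported in $\{|x_3|<2,\ -t<2\}$ and $\psi_R$ has compact horizontal support. The inequality may also be run up to any $\tau\in I_n$. The kernel $\Phi_n$ is smooth and bounded with all derivatives on $\operatorname{supp}\eta$, since $-t+r_n^2\ge r_n^2>0$. Hence $\Phi_n\eta$, $(\partial_t+\Delta)(\Phi_n\eta)$ and $\nabla(\Phi_n\eta)$ are bounded by constants $C_n$ and supported in the fixed slab $\mathcal Q_{-1}$. Because $\partial_\tau\psi_R=0$ and $\psi_R$ depends only on $x_h$, expanding gives
\[
 (\partial_\tau+\Delta)\phi_R=\psi_R(\partial_\tau+\Delta)(\Phi_n\eta)+2\nabla_h\psi_R\cdot\nabla_h(\Phi_n\eta)+\Phi_n\eta\Delta_h\psi_R .
\]
The middle term vanishes, since $\Phi_n\eta$ is independent of $x_h$. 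Similarly, $\nabla\phi_R=\psi_R\nabla(\Phi_n\eta)+\Phi_n\eta\nabla_h\psi_R$.

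Next come the limits of the main terms. The left-hand side terms $\int|u(\tau)|^2\phi_R$ and $\iint|\nabla u|^2\phi_R$ increase to their $\psi_R\equiv1$ counterparts by monotone convergence, and these limits are finite since $u\in L^\infty L^2\cap L^2\dot H^1$. On the right, $|u|^2\psi_R(\partial_\tau+\Delta)(\Phi_n\eta)$ is dominated by $C_n|u|^2\in L^1$. The convective term $|u|^2u\cdot\nabla(\Phi_n\eta)\psi_R$ is dominated by $C_n|u|^3$, which is integrable on $(-2,0)\times\R^3$ because $u\in L^4L^3\subset L^3L^3$ on the bounded interval. For the pressure term we use Lemma~\ref{lem:whole-space-pressure}, which lets us work with $\pi=\pi_{\rm R}\in L^2(-2,0;L^{3/2})$. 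Then $|\pi||u|\le$ an $L^1$ function by H\"older, since $\pi\in L^2_tL^{3/2}_x$ and $u\in L^2_tL^3_x$. So dominated convergence applies to all $\psi_R$-weighted terms.

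The remaining step, which I expect to be the only real obstacle, is the set of error terms carrying $\nabla_h\psi_R$ or $\Delta_h\psi_R$. These are supported where $R\le|x_h|\le2R$ and satisfy $|\nabla_h\psi_R|\le cR^{-1}$ and $|\Delta_h\psi_R|\le cR^{-2}$. We bound them as follows:
\[
 \Bigl|\iint|u|^2\Phi_n\eta\Delta_h\psi_R\Bigr|\le C_nR^{-2}\|u\|_{L^2L^2}^2,
 \qquad
 \Bigl|\iint(|u|^2+2|\pi|)|u|\Phi_n\eta|\nabla_h\psi_R|\Bigr|\le C_nR^{-1}\bigl(\|u\|_{L^3L^3}^3+\|\pi u\|_{L^1}\bigr).
\]
Both tend to $0$. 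Even without the decay factors, the integrands restricted to $\{|x_h|\ge R\}$ tend to zero in $L^1$ by absolute continuity of the integral. The point that needs care is that $\pi$ must be the globally integrable Riesz representative. With the original $\pi\in L^{3/2}_{\rm loc}$ the flux term would not be controlled on an unbounded slab. Lemma~\ref{lem:whole-space-pressure} removes this difficulty, because the $c(t)$ contribution drops out exactly for each $R$ (the divergence-free computation there applies to $\phi_R$).

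Finally, the inequality passes to the limit for almost every $\tau$. Taking the supremum over $\tau\in I_n$ afterwards is legitimate because all the bounds above are uniform in $\tau$. This yields the local energy inequality with the test function $\Phi_n\eta$, with pressure flux $\int\pi u^3\eta\partial_3\Phi_n$ and convective flux $\int|u|^2u^3\eta\partial_3\Phi_n$. Only these vertical fluxes survive, since $\Phi_n\eta$ depends on $x_3$ and $t$ alone.
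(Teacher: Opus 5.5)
Your proposal is correct and follows essentially the same route as the paper's proof: you test with $\Phi_n\eta\psi_R$, dispose of the annular terms carrying $\Delta_h\psi_R$ and $\nabla_h\psi_R$ using $|u|^2,|u|^3,|\pi||u|\in L^1(\mathbb R^3\times(-2,0))$ for the Riesz-normalized pressure, and pass to the limit in the remaining terms by dominated and monotone convergence. The monotone-convergence step on the left implicitly needs $\psi_R$ to increase in $R$, which the paper arranges by taking $\psi$ radially nonincreasing; dominated convergence works equally well there, since the limiting integrals are finite.
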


\begin{proof}
Fix $n\ge1$.  Choose a radial function $\psi\in C_c^\infty(\R^2)$, nonincreasing in $|x_h|$, with
\[
 0\le\psi\le1,
 \qquad \psi=1\text{ on }B_h(0,1),
 \qquad \operatorname{supp}\psi\subset B_h(0,2),
\]
and set $\psi_R(x_h):=\psi(x_h/R)$.  Then
\[
 |\nabla_h\psi_R|\le cR^{-1},
 \qquad |\Delta_h\psi_R|\le cR^{-2},
\]
with both derivatives supported in $\{R<|x_h|<2R\}$.  For almost every terminal time $\tau\in I_n$, the function
\[
 \phi_{n,R}(x,t):=\Phi_n(x_3,t)\eta(x_3,t)\psi_R(x_h)
\]
is a nonnegative, compactly supported admissible test function in \eqref{eq:LEI}; its temporal cutoff vanishes at $t=-2$.

Since $\Phi_n\eta$ is independent of $x_h$,
\begin{align*}
 (\partial_t+\Delta)\phi_{n,R}
 &=\psi_R(\partial_t+\partial_3^2)(\Phi_n\eta)
   +\Phi_n\eta\,\Delta_h\psi_R,
 \\
 u\cdot\nabla\phi_{n,R}
 &=\psi_Ru^3\partial_3(\Phi_n\eta)
   +\Phi_n\eta\,u_h\cdot\nabla_h\psi_R.
\end{align*}
For fixed $n$, $\Phi_n\eta$ and the derivatives appearing in
the two identities above are bounded.  Let
\[
 \mathcal A_R:=\{R<|x_h|<2R\}\times\R\times(-2,0).
\]
The term containing $\Delta_h\psi_R$ is bounded by
\[
 c_nR^{-2}\int_{\mathcal A_R}|u|^2\dd x\dd t,
\]
the horizontal convection term by
\[
 c_nR^{-1}\int_{\mathcal A_R}|u|^3\dd x\dd t,
\]
and the horizontal pressure term by
\[
 c_nR^{-1}\int_{\mathcal A_R}|\pi||u|\dd x\dd t.
\]
At the beginning of this section we established
\[
 |u|^2,\ |u|^3,\ |\pi||u|\in L^1(\R^3\times(-2,0)).
\]
Hence all three annular integrals tend to zero as $R\to\infty$ by absolute continuity.  The remaining right-hand terms converge by dominated convergence, because $0\le\psi_R\le1$ and $\psi_R\to1$ pointwise.

Choose the radial cutoffs so that $\psi_R$ increases with $R$.  For almost every terminal time $\tau$, monotone convergence gives
\[
 \int_{\R^3}|u(x,\tau)|^2\Phi_n\eta\psi_R\dd x
 \longrightarrow
 \int_{\R^3}|u(x,\tau)|^2\Phi_n\eta\dd x.
\]
The same argument, or Fatou's lemma followed by monotone convergence, applies to the nonnegative dissipation term.  Letting $R\to\infty$ in the local energy inequality therefore yields exactly the inequality obtained by testing with $\Phi_n\eta$.
\end{proof}

\begin{lemma}[Coercivity of the endpoint test]\label{lem:endpoint-coercivity}
There is $c>0$, independent of $n$, such that
\begin{equation}\label{eq:endpoint-Phi-lower}
 \Phi_n(x_3,t)\eta(x_3,t)\ge cr_n^{-1}
 \quad\text{on }\mathcal Q_n,
 \qquad n\ge1.
\end{equation}
Consequently, the left-hand side of the local energy inequality tested with $\Phi_n\eta$ controls $ce_n$.
\end{lemma}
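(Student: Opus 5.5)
The plan is to check the pointwise lower bound directly from the Gaussian formula \eqref{eq:Phi} and the cutoff \eqref{eq:endpoint-nested-cutoffs}, and then insert it into the left-hand side of the local energy inequality justified by Lemma~\ref{lem:endpoint-horizontal-cutoff}.

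\emph{Step 1: the cutoff equals one on $\mathcal Q_n$.} On $\mathcal Q_n=S_n\times I_n$ with $n\ge1$ we have $|x_3|<2r_n\le1$ and $-t<2r_n^2\le1/2$. Since $\zeta=1$ on $[-1,1]$ and $\rho=1$ on $[0,1]$, this gives $\eta=\eta_0=1$ there.

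\emph{Step 2: lower bound for the Gaussian.} Put $\tau=-t+r_n^2$. On $I_n$ we have $r_n^2\le\tau<3r_n^2$, so the prefactor satisfies $(4\pi\tau)^{-1/2}\ge(12\pi)^{-1/2}r_n^{-1}$. Also $x_3^2/(4\tau)<4r_n^2/(4r_n^2)=1$, so the exponential is at least $e^{-1}$. Together these prove \eqref{eq:endpoint-Phi-lower} with $c=(12\pi)^{-1/2}e^{-1}$.

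\emph{Step 3: consequence for the energy.} Test the local energy inequality with $\Phi_n\eta$, as permitted by Lemma~\ref{lem:endpoint-horizontal-cutoff}, at almost every terminal time $\tau\in I_n$. The left-hand side then dominates
\[
\tfrac12\int_{S_n}|u(x,\tau)|^2\Phi_n\eta\,\mathrm dx+\int_{-2}^{\tau}\int_{S_n}|\nabla u|^2\Phi_n\eta\,\mathrm dx\,\mathrm dt .
\]
This uses nonnegativity of the integrands and the fact that $\Phi_n\eta\ge cr_n^{-1}$ on $S_n$ at every time in $I_n$; for the dissipation we restrict to times in $I_n$ only.

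Take the essential supremum over $\tau\in I_n$ in the first term. For the second term let $\tau\uparrow0$ by monotonicity. Adding the two bounds shows that
\[
\sup_{\tau}\bigl(\text{LHS}\bigr)\ \ge\ c\,r_n^{-1}E_n=c\,e_n .
\]

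There is no real obstacle. The only point of care is that the supremum in time and the full dissipation are obtained from separate choices of terminal time, so the constant may halve; this is harmless.
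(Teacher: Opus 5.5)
Your proposal is correct and follows the paper's proof essentially step for step. You first show $\eta=1$ on $\mathcal Q_n$. You then bound the Gaussian below by $(12\pi)^{-1/2}e^{-1}r_n^{-1}$, using $r_n^2\le -t+r_n^2<3r_n^2$ and $x_3^2/(4(-t+r_n^2))<1$. Finally, you get the energy consequence the same way the paper does: take the essential supremum over admissible terminal times $\tau\in I_n$ for the kinetic term, and let $\tau\uparrow 0$ with monotone convergence for the dissipation.
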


\begin{proof}
Let $(x,t)\in\mathcal Q_n$ with $n\ge1$.  Then
\[
 |x_3|\le2r_n,
 \qquad -2r_n^2<t<0.
\]
Since $2r_n\le1$ and $2r_n^2\le1$, the definition of the outer cutoff gives $\eta(x_3,t)=1$ on $\mathcal Q_n$.  Moreover,
\[
 r_n^2\le -t+r_n^2\le3r_n^2,
 \qquad
 \frac{x_3^2}{4(-t+r_n^2)}\le1.
\]
Therefore
\[
 \Phi_n(x_3,t)
 \ge \frac{e^{-1}}{\sqrt{12\pi}}\,r_n^{-1}
 =:c_0r_n^{-1},
\]
which proves \eqref{eq:endpoint-Phi-lower}.

We now explain the consequence for the local energy inequality.  Apply the inequality, justified by Lemma~\ref{lem:endpoint-horizontal-cutoff}, from $t=-2$ up to an admissible terminal time $\tau\in I_n$.  The kinetic term on its left-hand side satisfies
\[
 \frac12\int_{\R^3}|u(x,\tau)|^2\Phi_n(x_3,\tau)\eta(x_3,\tau)\dd x
 \ge c r_n^{-1}\int_{S_n}|u(x,\tau)|^2\dd x.
\]
Taking the essential supremum over such $\tau$ controls the first term in $r_n^{-1}E_n$.  Next choose admissible terminal times $\tau_\ell\uparrow0$.  Since the dissipation integrand is nonnegative, monotone convergence gives
\begin{align*}
 \lim_{\ell\to\infty}
 \int_{-2}^{\tau_\ell}\int_{\R^3}|\nabla u|^2\Phi_n\eta\dd x\dd t
 &\ge c r_n^{-1}\int_{I_n}\int_{S_n}|\nabla u|^2\dd x\dd t.
\end{align*}
Thus the full left-hand side controls
\[
 cr_n^{-1}\left(
 \esssup_{t\in I_n}\int_{S_n}|u(x,t)|^2\dd x
 +\int_{\mathcal Q_n}|\nabla u|^2\dd x\dd t
 \right)=ce_n.
\]
\end{proof}

\begin{proposition}[Endpoint slab recursion]\label{prop:endpoint-recursion}
There are constants $C_0,C_1>0$, independent of $n$, such that $C_1$ depends only on the fixed exponents and cutoffs, while
\[
 C_0=C_0\!\left(p,m,E_{\mathrm g},
 \|u^3\|_{\widetilde L^p(-2,0;\dot B^0_{m,1})}\right),
\]
and
\begin{equation}\label{eq:endpoint-recursion}
 e_n
 \le C_0+C_1\sum_{j=0}^{n}
 \bigl(B_{j+c_*}+\widehat B_{j+c_*}\bigr)e_j,
 \qquad n\ge1.
\end{equation}
Moreover,
\begin{equation}\label{eq:endpoint-coefficient-tail}
 \lim_{N\to\infty}\sum_{j\ge N}
 \bigl(B_{j+c_*}+\widehat B_{j+c_*}\bigr)=0.
\end{equation}
\end{proposition}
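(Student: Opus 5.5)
The plan is to test the local energy inequality \eqref{eq:LEI} with $\phi=\Phi_n\eta$, which Lemma~\ref{lem:endpoint-horizontal-cutoff} makes admissible, on $(-2,\tau)$ for admissible terminal times $\tau\in I_n$. The left-hand side controls $ce_n$ by Lemma~\ref{lem:endpoint-coercivity}; on the left we then take the essential supremum in $\tau$ and a monotone limit $\tau\uparrow0$ in the dissipation. Since $\eta$ vanishes near $t=-2$, no initial term appears. It remains to bound the right-hand side uniformly in $\tau$ and $n$.

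\emph{Linear term.} We expand
\[
(\partial_t+\Delta)(\Phi_n\eta)=\Phi_n(\partial_t+\partial_3^2)\eta+2\partial_3\Phi_n\partial_3\eta,
\]
using $(\partial_t+\partial_3^2)\Phi_n=0$. The derivatives of $\eta=\eta_0$ are supported where $|x_3|\ge1$ or $-t\ge1$. On this set $\tau_n:=-t+r_n^2$ either satisfies $\tau_n\ge1$, or the Gaussian factors $\Phi_n,\partial_3\Phi_n$ are bounded by absolute constants, exactly as in Step~2 of Lemma~\ref{lem:endpoint-cutoff-geometry}. Hence this term is bounded by $cE_{\mathrm g}$.

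\emph{Nonlinear terms.} Since $\Phi_n\eta$ depends only on $(x_3,t)$,
\[
u\cdot\nabla(\Phi_n\eta)=u^3\eta\partial_3\Phi_n+u^3\Phi_n\partial_3\eta .
\]
The second piece is again supported away from the singular region, where $\Phi_n$ is bounded. Its contribution is bounded by $c\int_{-2}^0\int(|u|^3+|\pi||u|)$, which is finite and depends only on $E_{\mathrm g}$ through $L^4_tL^3_x$ interpolation and Calder\'on--Zygmund applied to \eqref{eq:endpoint-pressure-normalization}. The principal convection flux $\tfrac12\int|u|^2u^3\eta\partial_3\Phi_n$ is bounded by \eqref{eq:endpoint-convection-flux}, giving $c\sum_{k=0}^nB_ke_{k-c_*}$. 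The principal pressure flux $\int\pi u^3\eta\partial_3\Phi_n$ is bounded by Proposition~\ref{prop:endpoint-pressure-flux}.

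\emph{Reindexing.} In \eqref{eq:endpoint-convection-flux} the terms with $k<c_*$ use $e_{k-c_*}=E_{\mathrm g}$ and are absorbed into $C_0$ via \eqref{eq:endpoint-summability}. For $k\ge c_*$ we shift $i=k-c_*$, which gives $\sum_{i\le n}B_{i+c_*}e_i$. This has the same form as the pressure bound, so all estimates combine into \eqref{eq:endpoint-recursion}. The tail property \eqref{eq:endpoint-coefficient-tail} follows directly from \eqref{eq:endpoint-summability}, since both $\sum_kB_k$ and $\sum_j\widehat B_j$ converge.

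\emph{Main obstacle.} The key point is that every bound must be uniform in $\tau$ and $n$. In particular, the terms carrying derivatives of $\eta$ must be checked to give only constants, not $r_n$-singular factors. This needs the Gaussian bounds on the set $\{|x_3|\ge1\}\cup\{-t\ge1\}$. One must also take care that the $\esssup$ over $\tau$ on the left is matched by the $\sup_\tau$ bounds already proved for the flux terms. Once this is done, $C_1$ depends only on the exponents and cutoffs, and $C_0$ depends on $E_{\mathrm g}$ and the Besov norm through the finitely many low-index terms.
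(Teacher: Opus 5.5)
Your proposal is correct and follows the paper's proof essentially step for step. You test with $\Phi_n\eta$ via the horizontal-cutoff lemma, bound the $\eta$-derivative terms (the paper's $J_1$--$J_3$) by uniform Gaussian bounds away from the singular region together with the global energy and $|u|^3+|\pi||u|\in L^1$, invoke \eqref{eq:endpoint-convection-flux} and Proposition~\ref{prop:endpoint-pressure-flux} for the principal fluxes, shift $k\mapsto k-c_*$ while absorbing the finitely many $k<c_*$ into $C_0$, and read off the tail from \eqref{eq:endpoint-summability}.
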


\begin{proof}
Fix $n\ge1$ and an admissible terminal time $\tau\in I_n$.  By Lemma~\ref{lem:endpoint-horizontal-cutoff}, we may use
\[
 \phi_n:=\Phi_n\eta
\]
as a test function in the local energy inequality on $(-2,\tau)$.  Since both $\Phi_n$ and $\eta$ depend only on $(x_3,t)$ and $(\partial_t+\partial_3^2)\Phi_n=0$,
\begin{align*}
 (\partial_t+\Delta)(\Phi_n\eta)
 &=\Phi_n(\partial_t+\partial_3^2)\eta
   +2\partial_3\Phi_n\partial_3\eta,\\
 u\cdot\nabla(\Phi_n\eta)
 &=u^3\eta\partial_3\Phi_n
   +u^3\Phi_n\partial_3\eta.
\end{align*}
The right-hand side of the local energy inequality is bounded by
\[
 c\bigl(J_1+J_2+J_3+J_4+J_5\bigr),
\]
where
\begin{align*}
 J_1
 &:=\abs{\int_{-2}^{\tau}\int_{\R^3}|u|^2
 \left[\Phi_n(\partial_t+\partial_3^2)\eta
 +2\partial_3\Phi_n\partial_3\eta\right]\dd x\dd t},\\
 J_2
 &:=\abs{\int_{-2}^{\tau}\int_{\R^3}|u|^2u^3
 \Phi_n\partial_3\eta\dd x\dd t},\\
 J_3
 &:=\abs{\int_{-2}^{\tau}\int_{\R^3}\pi u^3
 \Phi_n\partial_3\eta\dd x\dd t},\\
 J_4
 &:=\abs{\int_{-2}^{\tau}\int_{\R^3}|u|^2u^3
 \eta\partial_3\Phi_n\dd x\dd t},\\
 J_5
 &:=\abs{\int_{-2}^{\tau}\int_{\R^3}\pi u^3
 \eta\partial_3\Phi_n\dd x\dd t}.
\end{align*}
The estimates below are uniform in $\tau$.

We first control $J_1,J_2,J_3$.  Derivatives of $\eta=\eta_0$ are supported in a fixed set on which either $-t$ is bounded away from zero or $|x_3|$ is bounded away from zero.  If $-t\ge c_0>0$, then $-t+r_n^2\ge c_0$ and the required derivatives of $\Phi_n$ are uniformly bounded.  If instead $|x_3|\ge c_0>0$, the Gaussian factor in the derivative bounds for $\Phi_n$ dominates every negative power of $-t+r_n^2$.  Hence, on the support of the derivatives of $\eta$,
\[
 |\Phi_n|+|\partial_3\Phi_n|+|\partial_3^2\Phi_n|
 \le c
\]
with a constant independent of $n$.  The global energy bound and
\[
 |u|^3+|\pi||u|\in L^1(\R^3\times(-2,0))
\]
therefore imply
\[
 J_1+J_2+J_3\le C_0,
\]
where $C_0$ is independent of $n$ and $\tau$.

For $J_4$, the uniform convection estimate \eqref{eq:endpoint-convection-flux} gives
\[
 J_4\le c\sum_{k=0}^nB_ke_{k-c_*}.
\]
For $J_5$, Proposition~\ref{prop:endpoint-pressure-flux} gives
\[
 J_5\le C
 +c\sum_{j=0}^n(B_{j+c_*}+\widehat B_{j+c_*})e_j.
\]
For $0\le k<c_*$, the negative-index convention gives $e_{k-c_*}=E_{\mathrm g}$, and these finitely many terms are absorbed into $C_0$.  For $k\ge c_*$, set $j=k-c_*$.  Since the terms are nonnegative,
\[
 \sum_{k=c_*}^nB_ke_{k-c_*}
 \le\sum_{j=0}^nB_{j+c_*}e_j.
\]
After increasing $C_0$ if necessary, the preceding bounds show, uniformly for admissible $\tau\in I_n$, that
\begin{align*}
 &r_n^{-1}\int_{S_n}|u(x,\tau)|^2\dd x
 +r_n^{-1}\int_{-2r_n^2}^{\tau}\int_{S_n}|\nabla u|^2\dd x\dd t\\
 &\qquad\le C_0+C_1\sum_{j=0}^n
 (B_{j+c_*}+\widehat B_{j+c_*})e_j.
\end{align*}
Taking the essential supremum over admissible $\tau\in I_n$ for the kinetic term and then a sequence $\tau_\ell\uparrow0$ for the dissipation term, as in Lemma~\ref{lem:endpoint-coercivity}, yields \eqref{eq:endpoint-recursion}.  Finally, \eqref{eq:endpoint-summability} implies
\[
 \sum_{j\ge0}(B_{j+c_*}+\widehat B_{j+c_*})<\infty,
\]
and hence its tail tends to zero.  This proves \eqref{eq:endpoint-coefficient-tail}.
\end{proof}

\begin{corollary}[Endpoint type-I slab energy]\label{cor:endpoint-energy}
Under \eqref{eq:endpoint-parameters} and $\sum_{j\in\Z}a_j<\infty$,
\begin{equation}\label{eq:endpoint-type-I}
 \sup_{n\ge1}e_n<\infty.
\end{equation}
\end{corollary}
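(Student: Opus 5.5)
The plan is to deduce the corollary from the recursion \eqref{eq:endpoint-recursion} of Proposition~\ref{prop:endpoint-recursion} by a discrete Gronwall argument. The coefficients $\beta_j:=C_1(B_{j+c_*}+\widehat B_{j+c_*})$ are nonnegative and, by \eqref{eq:endpoint-summability}, summable. Write $S:=\sum_{j\ge0}\beta_j<\infty$.

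First, check that each $e_n$ is finite. For fixed $n$, $E_n\le E_{\mathrm g}<\infty$, so $e_n\le 2^nE_{\mathrm g}<\infty$; the same holds for $e_0$. This finiteness is what makes absorption legitimate.

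Next, treat the diagonal term. The recursion contains $\beta_ne_n$ on its right-hand side. By \eqref{eq:endpoint-coefficient-tail} we may fix $N_0$ with $\beta_j\le\sum_{i\ge N_0}\beta_i\le1/2$ for all $j\ge N_0$. For $n\ge N_0$ we absorb $\beta_ne_n$ into the left-hand side, which gives
\[
 e_n\le 2C_0+2\sum_{j=0}^{n-1}\beta_je_j.
\]
For $1\le n<N_0$ the bound $e_n\le 2^{N_0}E_{\mathrm g}$ holds trivially, and likewise for $e_0$. Increasing the constant, we obtain for every $n\ge0$
\[
 e_n\le A+2\sum_{j=0}^{n-1}\beta_je_j,\qquad A:=2C_0+2^{N_0}E_{\mathrm g}.
\]

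Then run the discrete Gronwall induction. Set $y_n:=A+2\sum_{j<n}\beta_je_j$, so that $e_n\le y_n$. This gives
\[
 y_{n+1}=y_n+2\beta_ne_n\le(1+2\beta_n)y_n,
\]
hence
\[
 y_n\le A\prod_{j\ge0}(1+2\beta_j)\le Ae^{2S}.
\]
It follows that $\sup_{n\ge1}e_n\le Ae^{2S}<\infty$, which is \eqref{eq:endpoint-type-I}.

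The main care point is the diagonal term $j=n$, together with the requirement that each $e_n$ be a priori finite. Both are handled above: the tail smallness \eqref{eq:endpoint-coefficient-tail} makes the absorption possible, and $E_n\le E_{\mathrm g}$ guarantees finiteness. No smallness of the full sum $S$ is needed, since the Gronwall product only requires $\ell^1$ summability of $(\beta_j)$.
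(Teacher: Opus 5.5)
Your proof is correct, but it closes the recursion differently from the paper.

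\textbf{The paper's route.} The paper never isolates the diagonal term. It chooses $N$ with $C_1\sum_{j\ge N}\lambda_j\le\frac12$ and freezes the finitely many early terms into $C_N=C_0+C_1\sum_{j<N}\lambda_je_j$. It then applies the recursion at an index $n_M$ that realizes $F_M=\max_{N\le n\le M}e_n$. This absorbs the whole tail $\sum_{j=N}^{n_M}\lambda_je_j\le F_M\sum_{j\ge N}\lambda_j$ in one step and yields the linear bound $\sup_{n\ge N}e_n\le 2C_N$.

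\textbf{Your route.} You need smallness only for the single coefficient $\beta_n$, which requires merely $\beta_n\to0$. Using the a priori finiteness $e_n\le 2^nE_{\mathrm g}$, you absorb $\beta_ne_n$ into the left-hand side. This leaves a strictly lower-triangular inequality $e_n\le A+2\sum_{j<n}\beta_je_j$. The multiplicative discrete Gronwall lemma then controls all earlier terms with no smallness at all, only $\sum_j\beta_j<\infty$. (Take $N_0\ge1$ so that the recursion of Proposition~\ref{prop:endpoint-recursion} is available for every $n\ge N_0$; you already treat $e_0$ separately, so nothing changes.)

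\textbf{Comparison.} The two arguments use the same inputs: summability of the coefficients and the crude finiteness of finitely many initial $e_j$. The paper's maximum argument gives a constant linear in $C_N$. Your Gronwall argument is somewhat more robust, since the off-diagonal coefficients need not be small, but it pays with an exponential factor $e^{2S}$. That cost is harmless here, because Proposition~\ref{prop:endpoint-regularity} uses only the finiteness of $\sup_n e_n$ for the given solution, not any particular dependence of the bound.
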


\begin{proof}
Set
\[
 \lambda_j:=B_{j+c_*}+\widehat B_{j+c_*},
 \qquad j\ge0.
\]
By \eqref{eq:endpoint-coefficient-tail}, choose $N\ge1$ so large that
\begin{equation*}
 C_1\sum_{j\ge N}\lambda_j\le\frac12.
\end{equation*}
For each fixed $j$, $e_j<\infty$ because $E_j\le E_{\mathrm g}$ and $r_j>0$.  Hence
\[
 C_N:=C_0+C_1\sum_{j=0}^{N-1}\lambda_je_j<\infty.
\]
For every $n\ge N$, Proposition~\ref{prop:endpoint-recursion} therefore gives
\begin{equation*}
 e_n\le C_N+C_1\sum_{j=N}^{n}\lambda_je_j.
\end{equation*}

Fix $M\ge N$ and define
\[
 F_M:=\max_{N\le n\le M}e_n.
\]
Choose $n_M\in[N,M]$ such that $F_M=e_{n_M}$.  Applying
the recursion above with $n=n_M$ and using the nonnegativity of $\lambda_j$, we obtain
\begin{align*}
 F_M
 &\le C_N+C_1\sum_{j=N}^{n_M}\lambda_je_j\\
 &\le C_N+C_1F_M\sum_{j=N}^{n_M}\lambda_j\\
 &\le C_N+C_1F_M\sum_{j\ge N}\lambda_j
 \le C_N+\frac12F_M,
\end{align*}
where the last inequality follows from the choice of $N$ above.  Thus
\[
 F_M\le2C_N
\]
with a bound independent of $M$.  Letting $M\to\infty$ gives
\[
 \sup_{n\ge N}e_n\le2C_N.
\]
The finitely many values $e_1,\ldots,e_{N-1}$ are finite, so
\[
 \sup_{n\ge1}e_n<\infty.
\]
This proves \eqref{eq:endpoint-type-I}.
\end{proof}

\section{Compactness and proof of the main results}\label{sec:global}

For $r>0$, define the standard scale-invariant quantities
\begin{align}
 A(r)&:=r^{-1}\esssup_{-r^2<t<0}\int_{B_r}|u|^2\dd x,
 \label{eq:A-r}\\
 E(r)&:=r^{-1}\int_{Q_r}|\nabla u|^2\dd x\dd t,
 \label{eq:E-r}\\
 C(r)&:=r^{-2}\int_{Q_r}|u|^3\dd x\dd t,
 \label{eq:C-r}\\
 D(r)&:=r^{-2}\int_{Q_r}
 \abs{\pi-(\pi)_{B_r}(t)}^{3/2}\dd x\dd t.
 \label{eq:D-r}
\end{align}

For some $0<r_*\le1$, we shall use the scale-invariant type-I condition
\begin{equation}\label{eq:type-I-energy}
 \sup_{0<r<r_*}\bigl(A(r)+E(r)\bigr)<\infty.
\end{equation}

\begin{lemma}[Critical pressure control]\label{lem:pressure-critical}
Suppose \eqref{eq:type-I-energy} holds and $\pi\in L^{3/2}(Q_1)$.  Then
\begin{equation}\label{eq:critical-pressure}
 \sup_{0<r<r_*}D(r)<\infty.
\end{equation}
Moreover, for every fixed $\Lambda\ge1$,
\begin{equation}\label{eq:scaled-pressure-control}
 \sup_{0<r<r_*/\Lambda}
 \left\|
 r^2\bigl[\pi(rx,r^2t)-(\pi)_{B_r}(r^2t)\bigr]
 \right\|_{L^{3/2}(Q_\Lambda)}
 <\infty.
\end{equation}
\end{lemma}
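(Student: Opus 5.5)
The plan is the standard local pressure decomposition combined with a Campanato-type iteration for $D(r)$. From the type-I bound \eqref{eq:type-I-energy}, the usual interpolation (local Sobolev plus Hölder) gives
\[
 C(r)\le c\bigl(A(r)+E(r)\bigr)^{3/2}\le M \qquad\text{for all } 0<r<r_*.
\]
In fact the finer bound $C(\theta r)\le c\,(A(r)+E(r))^{3/4}E(r)^{3/4}+\dots\le M'$ holds uniformly. So $\sup_{r<r_*}C(r)=:M_C<\infty$.

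Next fix $0<\theta\le 1/4$ and $0<r<r_*$, and take a cutoff $\psi\in C_c^\infty(B_r)$ with $\psi=1$ on $B_{r/2}$. On $B_r$, split the pressure as $\pi=\pi_1+\pi_2$, where
\[
 \pi_1:=\sum_{a,b}\mathcal R_a\mathcal R_b\bigl(\psi(u^au^b-(u^au^b)_{B_r})\bigr).
\]
The remainder $\pi_2$ is harmonic in $B_{r/2}$ for a.e.\ $t$.

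Calderón--Zygmund bounds give
\[
 \int_{Q_r}|\pi_1|^{3/2}\le c\int_{Q_r}|u|^3 .
\]
The mean-value and gradient estimates for harmonic functions give
\[
 \int_{B_{\theta r}}|\pi_2-(\pi_2)_{B_{\theta r}}|^{3/2}\le c\,\theta^{3+3/2}\int_{B_{r/2}}|\pi_2-(\pi_2)_{B_{r/2}}|^{3/2}.
\]
Scaling these and using $\pi\in L^{3/2}(Q_1)$ for the initial scale yields
\[
 D(\theta r)\le c\,\theta^{5/2}D(r)+c_\theta\, C(r).
\]

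Now choose $\theta$ so that $c\,\theta^{5/2}\le 1/2$ and iterate along $r_\ell=\theta^\ell r_0$ with $r_0<r_*$ fixed. This gives
\[
 \sup_\ell D(r_\ell)\le D(r_0)+2c_\theta M_C<\infty,
\]
where $D(r_0)<\infty$ because $\pi\in L^{3/2}(Q_1)$. For intermediate radii $\theta r_\ell\le r\le r_\ell$, use
\[
 D(r)\le c_\theta\bigl(D(r_\ell)+C(r_\ell)\bigr),
 \qquad
 \int_{Q_r}|\pi-(\pi)_{B_r}|^{3/2}\le 2^{3/2}\int_{Q_r}|\pi-(\pi)_{B_{r_\ell}}|^{3/2}.
\]
This proves \eqref{eq:critical-pressure}.

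For \eqref{eq:scaled-pressure-control}, rescale by $r$. The rescaled pressure $\pi_r(x,t)=r^2\pi(rx,r^2t)$ minus the mean over $B_1$ has $L^{3/2}(Q_\Lambda)$ norm at most
\[
 \bigl(\Lambda^2 D(\Lambda r)\bigr)^{2/3}+(\text{correction from comparing means over } B_1 \text{ and } B_\Lambda).
\]
The correction is bounded by the same quantity at scale $\Lambda r<r_*$, since
\[
 |(\pi)_{B_r}-(\pi)_{B_{\Lambda r}}|\le c\Lambda^{3}|B_{\Lambda r}|^{-1}\int_{B_{\Lambda r}}|\pi-(\pi)_{B_{\Lambda r}}|
\]
at each time, and Hölder controls this in $L^{3/2}_t$. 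The main obstacle is the decay estimate for the harmonic part: we need the exponent $\theta^{5/2}$ to beat the constant with the right parabolic normalization. This is the standard computation: the factor $r^{-2}$ in $D$ combined with the oscillation decay $\theta^{3/2}\cdot\theta^3$ gives a net $\theta^{5/2}$, so the step is routine.
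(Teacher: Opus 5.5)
Your proposal is correct and follows essentially the paper's route. You use the same Riesz-transform splitting (subtracting $(u^au^b)_{B_r}$ is only cosmetic), the same $\theta^{9/2}$ oscillation decay of the harmonic part giving $D(\theta r)\le c\theta^{5/2}D(r)+c\theta^{-2}C(r)$, the same iteration with Jensen for the intermediate radii, and the same comparison of averages for the rescaled bound. The only step you leave implicit is the range $r_0\le r<r_*$, which, as in the paper, follows directly from $D(r)\le c\,r_0^{-2}\int_{Q_1}|\pi|^{3/2}$.
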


\begin{proof}
\begin{revision}
Energy interpolation on $Q_r$ gives
\begin{equation}\label{eq:C-type-I}
 C(r)\le C\bigl(A(r)+E(r)\bigr)^{3/2},
\end{equation}
so $M_C:=\sup_{0<r<r_*}C(r)<\infty$.  We now record the pressure decay with all scale factors.  Fix $0<\theta\le1/4$ and choose $\chi\in C_c^\infty(B_r)$ such that $\chi=1$ on $B_{3r/4}$ and $|\nabla^\ell\chi|\le C_\ell r^{-\ell}$.  For almost every time set
\[
 \pi_0:=\sum_{i,j=1}^3\mathcal R_i\mathcal R_j
 \bigl[(u^iu^j)\chi\bigr],
 \qquad
 \pi_h:=\pi-(\pi)_{B_r}-\pi_0.
\]
Then $\pi_h$ is harmonic in $B_{r/2}$.  Calder\'on--Zygmund boundedness and the inclusion $Q_{\theta r}\subset Q_r$ yield
\begin{equation}\label{eq:pressure-local-part-detailed}
 (\theta r)^{-2}\int_{-(\theta r)^2}^0
 \int_{B_{\theta r}}|\pi_0|^{3/2}\,dx\,dt
 \le C\theta^{-2}C(r).
\end{equation}
For the harmonic part, the interior gradient estimate gives, for almost every $t$,
\[
 \sup_{B_{r/4}}|\nabla\pi_h(\cdot,t)|^{3/2}
 \le Cr^{-9/2}
 \int_{B_{r/2}}|\pi_h-(\pi_h)_{B_{r/2}}|^{3/2}\,dx.
\]
Poincar\'e's inequality in $B_{\theta r}$ therefore implies
\begin{align*}
 &\int_{B_{\theta r}}
 |\pi_h-(\pi_h)_{B_{\theta r}}|^{3/2}\,dx\\
 &\qquad\le C(\theta r)^{9/2}
 \sup_{B_{\theta r}}|\nabla\pi_h|^{3/2}\\
 &\qquad\le C\theta^{9/2}
 \int_{B_{r/2}}|\pi_h-(\pi_h)_{B_{r/2}}|^{3/2}\,dx.
\end{align*}
Since $\pi_h=\pi-(\pi)_{B_r}-\pi_0$, Jensen's inequality and the Calder\'on--Zygmund estimate give
\[
 \int_{B_{r/2}}|\pi_h-(\pi_h)_{B_{r/2}}|^{3/2}\,dx
 \le C\int_{B_r}|\pi-(\pi)_{B_r}|^{3/2}\,dx
 +C\int_{B_r}|u|^3\,dx.
\]
Integrating over $(-\theta^2r^2,0)$, enlarging the time interval to $(-r^2,0)$, and dividing by $(\theta r)^2$, we obtain
\begin{equation}\label{eq:pressure-harmonic-part-detailed}
 (\theta r)^{-2}\int_{Q_{\theta r}}
 |\pi_h-(\pi_h)_{B_{\theta r}}|^{3/2}
 \le C\theta^{5/2}\bigl(D(r)+C(r)\bigr).
\end{equation}
Combining \eqref{eq:pressure-local-part-detailed} and \eqref{eq:pressure-harmonic-part-detailed}, and absorbing the harmless term $\theta^{5/2}C(r)$ into $\theta^{-2}C(r)$, gives
\begin{equation}\label{eq:pressure-decay}
 D(\theta r)\le C\theta^{5/2}D(r)+C\theta^{-2}C(r).
\end{equation}
Choose $\theta_0\in(0,1/4)$ so that $C\theta_0^{5/2}\le1/2$.  For any fixed $0<r_0<r_*$, iteration gives
\[
 D(\theta_0^jr_0)
 \le2^{-j}D(r_0)+2C\theta_0^{-2}M_C,
 \qquad j\ge0.
\]
Here $D(r_0)<\infty$ follows from $\pi\in L^{3/2}(Q_1)$.  If
$\theta_0^{j+1}r_0<r\le\theta_0^jr_0=:R$, then Jensen's inequality for the two pressure averages gives
\[
 \int_{Q_r}|\pi-(\pi)_{B_r}|^{3/2}
 \le C\int_{Q_R}|\pi-(\pi)_{B_R}|^{3/2},
\]
and hence $D(r)\le C\theta_0^{-2}D(R)$ for every $0<r<r_0$.  For the remaining scales $r_0\le r<r_*$, another application of Jensen's inequality yields
\[
 D(r)
 \le Cr_0^{-2}\int_{Q_{r_*}}|\pi|^{3/2}\,dx\,dt<\infty.
\]
Together with the small-scale iteration, this proves \eqref{eq:critical-pressure}.

It remains to verify the pressure bound after rescaling to a fixed larger cylinder.  If $B_\rho\subset B_r$ and $r/\rho\le\Lambda$, Jensen's inequality yields
\begin{equation}\label{eq:pressure-average-comparison}
 |B_\rho|\,| (\pi)_{B_\rho}(t)-(\pi)_{B_r}(t)|^{3/2}
 \le C_\Lambda\int_{B_r}|\pi-(\pi)_{B_r}(t)|^{3/2}\,dx.
\end{equation}
Thus, for $0<r<r_*/\Lambda$,
\begin{align*}
 &\left\|r^2\bigl[\pi(rx,r^2t)-(\pi)_{B_r}(r^2t)\bigr]
 \right\|_{L^{3/2}(Q_\Lambda)}^{3/2}\\
 &\quad=r^{-2}\int_{Q_{\Lambda r}}
 |\pi-(\pi)_{B_r}(t)|^{3/2}\,dx\,dt\\
 &\quad\le C_\Lambda r^{-2}\int_{Q_{\Lambda r}}
 |\pi-(\pi)_{B_{\Lambda r}}(t)|^{3/2}\,dx\,dt\\
 &\quad=C_\Lambda\Lambda^2D(\Lambda r),
\end{align*}
which is uniformly bounded by \eqref{eq:critical-pressure}.  This proves \eqref{eq:scaled-pressure-control}.
\end{revision}
\end{proof}

\begin{lemma}[Rigidity of a zero-component limit]\label{lem:zero-component-rigidity}
Let $(V,\Pi)$ be a suitable weak solution in $Q_1$ and suppose that $V^3=0$ almost everywhere.  \rev{Then the origin is a regular point.  Equivalently, there exists $\rho_0\in(0,1/2)$ such that $V\in L^\infty(Q_{\rho_0})$.}
\end{lemma}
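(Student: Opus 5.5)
The plan is to prove the statement with the two-component version of the one-component local regularity theory: the local energy inequality, tested with cutoffs of the form $\eta(x_3,t)$, only sees the flux through $V^3$, and $V^3=0$ removes it.

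\emph{Step 1 (structure).} Since $V^3=0$ a.e., the divergence-free condition gives $\partial_1V^1+\partial_2V^2=0$. The third momentum equation reads $\partial_t V^3+V\cdot\nabla V^3-\Delta V^3+\partial_3\Pi=0$. Every term except the pressure gradient vanishes, so $\partial_3\Pi=0$ in $\mathcal D'(Q_1)$ and $\Pi=\Pi(x_h,t)$ locally. The first two equations then involve only $x_3$-independent pressure.

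\emph{Step 2 (local energy with vertical cutoffs).} For nonnegative $\phi=\phi(x_3,t)$ times a horizontal cutoff, the convective and pressure fluxes in \eqref{eq:LEI} pair with $V\cdot\nabla\phi$. The $x_3$-part of this pairing is $V^3\partial_3\phi$, which vanishes. Hence only horizontal-derivative flux terms survive.

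\emph{Step 3 (choice of route).} The cleanest route is to show that $V$ is independent of $x_3$ near the origin, which reduces the problem to the two-dimensional Navier--Stokes system. To this end, take the $x_3$-difference quotient $w:=V(x+he_3,t)-V(x,t)$, which satisfies $w^3=0$. Because $\Pi$ is independent of $x_3$, the pressure cancels in the equation for $w$. This gives a linear transport--diffusion system
\[
 \partial_tw+V\cdot\nabla w+w\cdot\nabla V(\cdot+he_3)-\Delta w=0.
\]
Uniqueness for this system does not force $w=0$ by itself, since there is no initial condition. I therefore abandon this route and keep Step 3 as a simpler alternative only.

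\emph{Step 4 (main route: $\varepsilon$-regularity).} Instead I would use the CKN-type $\varepsilon$-regularity criterion: the origin is regular if $\limsup_{r\to0}r^{-1}\int_{Q_r}|\nabla V|^2$ is small. Apply \eqref{eq:LEI} with a backward heat kernel $\Gamma(x,t)$ centred at the origin, cut off by a smooth $\psi$. Then
\[
 V\cdot\nabla(\Gamma\psi)=V^h\cdot\nabla_h(\Gamma\psi).
\]
This is the point where only the horizontal components appear, as in the two-component mechanism of Wang--Zhang--Zhang cited in the introduction. To estimate the pressure term, I would use $\Pi=\Pi(x_h,t)$ together with $\operatorname{div}_hV^h=0$ and integrate by parts in $x_h$. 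I would then use the two-dimensional structure $-\Delta_h\Pi=\partial_i\partial_j(V^iV^j)$ for $i,j\le2$, after averaging in $x_3$ (justified because $\Pi$ does not depend on $x_3$). The convective flux $|V|^2V^h\cdot\nabla_h\Gamma$ I would bound by a Gronwall-type iteration on $A(r)+E(r)$.

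\emph{Main obstacle.} I expect the hardest step to be closing a quantitative smallness estimate from the mere existence of a suitable solution with $V^3=0$, with no a priori bound. I would try the following. Since $V^h$ solves a system whose pressure is $x_3$-independent, the $x_3$-average $\bar V$ of $V^h$ over a small interval solves 2D Navier--Stokes with a forcing term $-\partial_3$-flux. That forcing term vanishes because $V^3=0$ kills $\partial_3(V^3V^h)$, up to boundary terms $\partial_3V^h$ at the interval ends. Two-dimensional regularity would then give bounds on the averages, and the differences would be controlled by the energy equation for $\partial_3V$, which involves no pressure. If this gives $L^\infty$ bounds on $V^h$ in a smaller cylinder, the conclusion of the statement follows; otherwise Step 3 remains the fallback.
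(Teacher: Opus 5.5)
There is a genuine gap: none of your routes reaches a proof. Steps~1--2 are correct ($\partial_3\Pi=0$ in $\mathcal D'(Q_1)$, and the vertical flux $V^3\partial_3\phi$ vanishes). You discard Step~3 yourself.

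\textbf{Step~4 cannot work.} With a three-dimensional backward heat kernel $\Gamma$, the hypothesis $V^3=0$ only turns $V\cdot\nabla\Gamma$ into $V^h\cdot\nabla_h\Gamma$. The flux $\int|V|^2V^h\cdot\nabla_h(\Gamma\psi)$ is still cubic in the uncontrolled field. It is controlled only by quantities like $C(r)\lesssim(A(r)+E(r))^{3/2}$. So no Gronwall-type iteration can give the smallness of $\limsup_{r\to0}E(r)$ that the CKN criterion needs, because nothing in the problem is small.

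Your appeal to Wang--Zhang--Zhang is reversed. There the horizontal kernel makes the flux contain $u^h$, which is the \emph{assumed} quantity; here $V^h$ is exactly the unknown. To make $V^3=0$ annihilate the flux, the kernel must depend only on $(x_3,t)$, like $\Phi_n$ in Section~3, so that $V\cdot\nabla\Phi_n=V^3\partial_3\Phi_n=0$. Even then, the horizontal cutoff inside $Q_1$ and the local pressure create new flux terms. Moreover, even in the whole-space setting this test yields only a type-I bound, not smallness.

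\textbf{The fallback also fails}, for three reasons.
First, the $x_3$-average of $V^h\cdot\nabla_hV^h$ differs from $\bar V\cdot\nabla_h\bar V$ by $\nabla_h\cdot(\overline{V^h\otimes V^h}-\bar V\otimes\bar V)$, which is neither zero nor small.
Second, $\partial_3V^h$ has no trace on $\{x_3=\pm\delta\}$ when $V\in L^2_tH^1_x$.
Third, an energy estimate for $w=\partial_3V^h$ in $\partial_tw+V^h\cdot\nabla_hw+w\cdot\nabla_hV^h-\Delta w=0$ requires $w\in L^\infty_tL^2_x\cap L^2_tH^1_x$ and control of the stretching term $w\cdot\nabla_hV^h$. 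That is strong-solution regularity, i.e.\ the conclusion itself.

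\textbf{What the paper does.} It applies the local one-component criterion of Chae and Wolf \cite[Corollary~1.5]{ChaeWolf2021} at the origin with the subcritical pair $(4,\infty)$, for which $2/4+3/\infty=1/2<1$. Because $V^3\equiv0$, the hypothesis $V^3\in L^4_tL^\infty_x$ near the origin holds trivially. Hence the origin is regular and $V\in L^\infty(Q_{\rho_0})$ for some $\rho_0\in(0,1/2)$.

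The missing idea is to reduce to an existing local regularity criterion for suitable weak solutions stated in terms of one velocity component. Rebuilding an $\varepsilon$-regularity argument with no small quantity available cannot succeed. Your structural observations in Steps~1--2 are true but are not needed.
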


\begin{proof}
\begin{revision}
Apply \cite[Corollary~1.5]{ChaeWolf2021} at the origin with the auxiliary exponents
\[
 p_{\rm CW}=4,
 \qquad q_{\rm CW}=\infty,
 \qquad \frac2{p_{\rm CW}}+\frac3{q_{\rm CW}}=\frac12<1.
\]
Because $V^3\equiv0$, the one-component hypothesis of that result is trivially satisfied.  Hence the origin is regular, which gives $V\in L^\infty(Q_{\rho_0})$ for some $\rho_0\in(0,1/2)$.  This argument does not assert that $V$ is two-dimensional: the horizontal field may still depend on $x_3$ and satisfies
\[
 \partial_tV^h+V^h\cdot\nabla_hV^h
 -\Delta_hV^h-\partial_3^2V^h+\nabla_h\Pi=0,
 \qquad \nabla_h\cdot V^h=0.
\]
\end{revision}
\end{proof}

\begin{proposition}[Type-I compactness and vanishing component]\label{prop:compactness-rigidity}
Let $(u,\pi)$ be a suitable weak solution in a backward neighborhood of the origin.  After a fixed parabolic scaling, assume that $Q_1$ is contained in this neighborhood and that \eqref{eq:type-I-energy} holds for some $0<r_*\le1$.
Let $r_k=2^{-k}$ and define
\begin{equation}\label{eq:blowup-scaling}
 u^{(k)}(x,t):=r_k u(r_kx,r_k^2t),
 \qquad
 \pi^{(k)}(x,t):=r_k^2\bigl(\pi(r_kx,r_k^2t)-(\pi)_{B_{r_k}}(r_k^2t)\bigr).
\end{equation}
The subtracted spatial average depends only on time and therefore does not alter either the distributional equation or the local energy inequality.
If
\begin{equation}\label{eq:common-component-vanishing}
 (u^{(k)})^3\longrightarrow0
 \quad\text{in }\mathcal D'(Q_1),
\end{equation}
then the origin is regular.
\end{proposition}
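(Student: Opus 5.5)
We argue by contradiction through a compactness argument in the style of Seregin's blow-up analysis. Suppose the origin is singular. By \eqref{eq:type-I-energy} and Lemma~\ref{lem:pressure-critical}, the rescaled pairs $(u^{(k)},\pi^{(k)})$ in \eqref{eq:blowup-scaling} are uniformly bounded on every fixed cylinder $Q_\Lambda$, at least for $k$ large enough that $r_k<r_*/\Lambda$. The bounds are of the following kind: $u^{(k)}$ in $L^\infty L^2\cap L^2\dot H^1$ with norm controlled by $\sup_r(A(r)+E(r))$, hence in $L^3$ by \eqref{eq:C-type-I}, and $\pi^{(k)}$ in $L^{3/2}(Q_\Lambda)$ by \eqref{eq:scaled-pressure-control}.

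From the equation, $\partial_tu^{(k)}$ is bounded in a negative Sobolev space. The Aubin--Lions lemma and a diagonal argument then give a subsequence with:
\begin{itemize}
\item[(i)] $u^{(k)}\to V$ strongly in $L^3_{\rm loc}$ and weakly in the energy space;
\item[(ii)] $\pi^{(k)}\rightharpoonup\Pi$ weakly in $L^{3/2}_{\rm loc}$.
\end{itemize}
Passing to the limit in the equations and in the local energy inequality \eqref{eq:LEI} is standard. Strong $L^3$ convergence handles the cubic term and the pressure--velocity product, lower semicontinuity handles the dissipation and kinetic terms, and the time-dependent average subtracted in $\pi^{(k)}$ is harmless because $\nabla\cdot u^{(k)}=0$. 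Hence $(V,\Pi)$ is a suitable weak solution in $Q_1$, indeed in every $Q_\Lambda$.

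By \eqref{eq:common-component-vanishing} and the strong $L^3$ convergence, $V^3=0$ almost everywhere in $Q_1$. Lemma~\ref{lem:zero-component-rigidity} therefore gives $\rho_0\in(0,1/2)$ with $V\in L^\infty(Q_{\rho_0})$. Parabolic regularity, for instance the CKN/Lin $\varepsilon$-regularity on smaller cylinders, then makes the scaled quantity $\rho^{-2}\int_{Q_\rho}(|V|^3+|\Pi-(\Pi)_{B_\rho}|^{3/2})$ as small as desired for some small fixed $\rho\le\rho_0/2$. For $V$ this is immediate from boundedness, since the scaled integral is $O(\rho^3)$.

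The main obstacle is the pressure term: weak convergence of $\pi^{(k)}$ does not transfer smallness. I would handle it with the decomposition used in the proof of Lemma~\ref{lem:pressure-critical}. Split $\pi^{(k)}$ on $B_1$ into a local Riesz part $\pi^{(k)}_0$, built from $u^{(k)}\otimes u^{(k)}\chi$, and a harmonic remainder. The local part converges strongly in $L^{3/2}$ because $u^{(k)}\otimes u^{(k)}$ converges strongly. The harmonic remainder is uniformly bounded in $L^{3/2}$, so interior estimates give local uniform convergence of its oscillation on $B_{1/2}$. Together these give strong $L^{3/2}$ convergence of $\pi^{(k)}-(\pi^{(k)})_{B_\rho}$ on $Q_\rho$.

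Consequently, for large $k$, $\rho^{-2}\int_{Q_\rho}(|u^{(k)}|^3+|\pi^{(k)}-(\pi^{(k)})_{B_\rho}|^{3/2})<\varepsilon_{\rm CKN}$. Because this quantity is scale invariant, it equals the same expression for $(u,\pi)$ on $Q_{\rho r_k}$. The $\varepsilon$-regularity criterion of Caffarelli--Kohn--Nirenberg/Lin then shows that the origin is regular, contradicting the assumption; in fact the argument proves regularity directly, without contradiction. The only remaining check is that the required smallness is uniform in the choice of subsequence. It is, since only one large $k$ is needed.
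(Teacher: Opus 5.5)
Your compactness step, the conclusion $V^3=0$, and the appeal to Lemma~\ref{lem:zero-component-rigidity} match the paper. The gap is in the pressure step. Your remainder $\pi^{(k)}_h$ is harmonic only in $x$, for almost every $t$. Interior estimates therefore bound $\nabla\pi^{(k)}_h(\cdot,t)$ on $B_{1/4}$ by $\|\pi^{(k)}_h(\cdot,t)\|_{L^{3/2}(B_{1/2})}$ one time slice at a time. That gives compactness in $x$ but none in $t$: the pressure has no time regularity, and your bound on $\partial_tu^{(k)}$ in a negative space says nothing about it. A remainder of the form $g_k(t)x_1$, with $g_k\rightharpoonup0$ weakly but not strongly in $L^{3/2}(-1,0)$, is compatible with every bound you have, and its oscillation on $B_\rho$ does not converge strongly. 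So neither the ``local uniform convergence'' of the oscillation nor the strong $L^{3/2}(Q_\rho)$ convergence of $\pi^{(k)}-(\pi^{(k)})_{B_\rho}$ follows. The smallness of the limit pressure $\Pi$ therefore cannot be transferred back to $\pi^{(k)}$ along this route.

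You do not actually need convergence of the pressure. Since $C(\cdot)$ and $D(\cdot)$ are scale invariant, apply \eqref{eq:pressure-decay} at radius $\rho_1r_k$ with $\rho_1\le\rho_0$:
\[
 D(\theta\rho_1r_k)\le C\theta^{5/2}\sup_{0<r<r_*}D(r)+C\theta^{-2}C(\rho_1r_k).
\]
The supremum is finite by Lemma~\ref{lem:pressure-critical}. By strong $L^3$ convergence, $C(\rho_1r_k)=\rho_1^{-2}\int_{Q_{\rho_1}}|u^{(k)}|^3\to\rho_1^{-2}\int_{Q_{\rho_1}}|V|^3\le c\rho_1^3\|V\|_{L^\infty(Q_{\rho_0})}^3$. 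Choose $\theta$ first, then $\rho_1$, then $k$. This puts both $C$ and $D$ at radius $\theta\rho_1r_k$ below the CKN/Lin threshold, since $C(\theta\rho_1r_k)\le\theta^{-2}C(\rho_1r_k)$, and completes your direct argument.

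The paper avoids the pressure entirely. It uses the pressure-free criterion of \cite{JiuWangZhou2019}, under which singularity of $u^{(k)}$ forces $\vartheta^{-2}\int_{Q_\vartheta}|u^{(k)}|^3\ge\varepsilon_*$ for all $0<\vartheta<1/4$. This lower bound involves only the velocity, so it passes to the limit by strong $L^3$ convergence alone. It then contradicts the boundedness of the limit near the origin.
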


\begin{proof}
Assume that the origin is singular.  Lemma~\ref{lem:pressure-critical} and \eqref{eq:type-I-energy} give the scale-invariant pressure bound.  Fix $R\ge1$.  Since the original solution is defined in $Q_1$, the rescaled pair is defined in $Q_{1/r_k}$; hence $Q_{2R}\subset Q_{1/r_k}$ for all sufficiently large $k$.  On this larger cylinder the type-I and pressure bounds yield
\[
 \sup_{k\ge k_0(R)}\left(
 \norm[L^\infty(-4R^2,0;L^2(B_{2R}))]{u^{(k)}}
 +\norm[L^2(-4R^2,0;H^1(B_{2R}))]{u^{(k)}}
 +\norm[L^{3/2}(Q_{2R})]{\pi^{(k)}}\right)<\infty.
\]
Local energy interpolation also gives a uniform $L^{10/3}(Q_{2R})$ bound.  Restricting the equation to $B_R\times(-R^2,0)$,
\[
 \partial_tu^{(k)}
 =\Delta u^{(k)}-\operatorname{div}(u^{(k)}\otimes u^{(k)})-\nabla\pi^{(k)},
\]
we find that the time derivatives are bounded in $L^{3/2}(-R^2,0;W^{-1,3/2}(B_R))$.  Indeed, the viscous term is bounded in $L^2_tH^{-1}_x$, the nonlinear term in $L^{5/3}_tW^{-1,5/3}_x$, and the pressure term in $L^{3/2}_tW^{-1,3/2}_x$.

Since
\[
 H^1(B_R)\Subset L^2(B_R)\hookrightarrow W^{-1,3/2}(B_R),
\]
the Aubin--Lions--Simon compactness theorem \cite{Simon1987} gives, after extraction, strong convergence in $L^2(Q_R)$.  Applying this argument for $R=1,2,\ldots$ and taking a diagonal subsequence, we obtain, for every fixed $R\ge1$,
\begin{align}
 u^{(k)}&\stackrel{*}{\rightharpoonup} U
 &&\text{in }L^\infty(-R^2,0;L^2(B_R)),\notag\\
 u^{(k)}&\rightharpoonup U
 &&\text{in }L^2(-R^2,0;H^1(B_R)),\notag\\
 u^{(k)}&\longrightarrow U
 &&\text{strongly in }L^2(Q_R),\notag\\
 \pi^{(k)}&\rightharpoonup\Pi
 &&\text{in }L^{3/2}(Q_R).\notag
\end{align}
The limit $U$ belongs to $L^{10/3}(Q_R)$, and $u^{(k)}-U$ is uniformly bounded in that space.  Hence interpolation gives
\begin{equation}\label{eq:compactness}
 \norm[L^3(Q_R)]{u^{(k)}-U}
 \le
 \norm[L^2(Q_R)]{u^{(k)}-U}^{1/6}
 \norm[L^{10/3}(Q_R)]{u^{(k)}-U}^{5/6}
 \longrightarrow0.
\end{equation}
Thus the strong $L^3$ convergence holds on the full terminal cylinder $Q_R$, not only on compact subsets separated from $t=0$.

\begin{revision}
The distributional equation passes to the limit because
\[
 u^{(k)}\otimes u^{(k)}\longrightarrow U\otimes U
 \quad\text{strongly in }L^{3/2}(Q_R),
\]
while the pressure converges weakly in $L^{3/2}$.  For a nonnegative compactly supported test function, the kinetic and convection terms in the local energy inequality converge by the strong $L^2$ and $L^3$ convergence, respectively; the pressure flux converges by pairing weak $L^{3/2}$ convergence of $\pi^{(k)}$ with strong $L^3$ convergence of $u^{(k)}$; and the dissipation is lower semicontinuous under weak $L^2$ convergence of the gradients.  Thus the local energy inequality passes to the limit, as in \cite{Lin1998}.  Since the diagonal subsequence is constructed on $Q_R$ for every integer $R\ge1$, $(U,\Pi)$ is a suitable ancient solution on $\mathbb R^3\times(-\infty,0)$.
\end{revision}  Condition \eqref{eq:common-component-vanishing} and \eqref{eq:compactness} imply $U^3=0$ almost everywhere in $Q_1$.  \rev{Lemma~\ref{lem:zero-component-rigidity} therefore gives a radius $\rho_0\in(0,1/2)$ such that $U\in L^\infty(Q_{\rho_0})$.}

It remains to retain singularity in the limit.  By the pressure-free criterion \cite[Theorem~1.2]{JiuWangZhou2019}, there is a universal $\eps_v>0$ such that a suitable weak solution $v$ in $Q_1$ is regular at the origin whenever
\[
 \int_{Q_1}|v|^{20/7}\dd x\dd t<\eps_v.
\]
For every sufficiently large $k$, the rescaled solution $u^{(k)}$ is singular at the origin.  Applying the contrapositive after rescaling $Q_\vartheta$ to $Q_1$ gives
\[
 \eps_v
 \le \vartheta^{-15/7}\int_{Q_\vartheta}|u^{(k)}|^{20/7}\dd x\dd t
 \le c\left(\vartheta^{-2}\int_{Q_\vartheta}|u^{(k)}|^3\dd x\dd t\right)^{20/21}.
\]
Consequently, for some universal $\eps_*>0$,
\begin{equation}\label{eq:singular-lower}
 \vartheta^{-2}\int_{Q_\vartheta}|u^{(k)}|^3\dd x\dd t\ge\eps_*,
 \qquad 0<\vartheta<1/4.
\end{equation}
\begin{revision}
For fixed $0<\vartheta<\min\{\rho_0,1/4\}$, the strong convergence \eqref{eq:compactness} on $Q_1$ passes this lower bound to $U$.  But
\[
 \vartheta^{-2}\int_{Q_\vartheta}|U|^3\dd x\dd t
 \le c\vartheta^3\norm[L^\infty(Q_{\rho_0})]{U}^3
 \longrightarrow0
 \qquad(\vartheta\downarrow0),
\]
a contradiction.
\end{revision}
\end{proof}

\begin{proposition}[Endpoint criterion implies regularity]\label{prop:endpoint-regularity}
Let $(u,\pi)$ be a suitable weak solution on $\R^3\times(-2,0)$ in the global energy class.  Assume \eqref{eq:endpoint-parameters} and
\begin{equation}\label{eq:endpoint-local-assumption}
 u^3\in\widetilde L^p(-2,0;\Bdot^0_{m,1}(\R^3)).
\end{equation}
Then the origin is a regular point.
\end{proposition}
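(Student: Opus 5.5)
The plan is to combine Corollary~\ref{cor:endpoint-energy} with Proposition~\ref{prop:compactness-rigidity}. There are three steps: first convert the slab bound into the ball-based type-I condition \eqref{eq:type-I-energy}; then verify the vanishing hypothesis \eqref{eq:common-component-vanishing} for the blow-up sequence; finally invoke the compactness--rigidity proposition.

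\emph{Step 1 (type-I bound).} For $n\ge1$, $B_{r_n}\subset S_n$ and $(-r_n^2,0)\subset I_n$. Hence
\[
 A(r_n)+E(r_n)\le r_n^{-1}E_n=e_n ,
\]
and Corollary~\ref{cor:endpoint-energy} gives $\sup_n e_n<\infty$. For a general radius $r_{n+1}\le r<r_n$, monotonicity gives $A(r)+E(r)\le 2\,(A(r_n)+E(r_n))$. Therefore \eqref{eq:type-I-energy} holds with $r_*=1/2$.

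Next we check the setting of Proposition~\ref{prop:compactness-rigidity}. The global energy class gives $\pi\in L^{3/2}$ on $Q_1$ after the Riesz normalization of Lemma~\ref{lem:whole-space-pressure}. Moreover $Q_1\subset\mathbb R^3\times(-2,0)$, so no further rescaling is needed.

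\emph{Step 2 (vanishing of the third component).} The scaling $u^{(k)}(x,t)=r_ku(r_kx,r_k^2t)$ preserves the $L^p_tL^m_x$ norm, because $2/p+3/m=1$. This gives
\[
 \|(u^{(k)})^3\|_{L^p(Q_1)L^m}\le \|u^3\|_{L^p(-r_k^2,0;L^m(\mathbb R^3))}.
\]
By Lemma~\ref{lem:homogeneous-reconstruction}, $u^3\in L^p(-2,0;L^m(\mathbb R^3))$. Since $p<\infty$, absolute continuity of the integral $t\mapsto\|u^3(t)\|_{L^m}^p$ makes the right-hand side tend to $0$ as $k\to\infty$. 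Thus $(u^{(k)})^3\to0$ in $L^p_tL^m_x(Q_1)$, hence in $\mathcal D'(Q_1)$.

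At this step the spatial-frequency structure of the hypothesis plays no role: only the embedding into $L^p_tL^m_x$ and the finiteness of $p$ are used. The Besov summability was already spent in Step 1. One subtlety is that the blow-up in Proposition~\ref{prop:compactness-rigidity} uses the dyadic radii $r_k$, while the norm restriction above is to the full spatial domain. I would therefore note that the $L^m(B_1)$ norm is dominated by the full-space norm, which avoids any localization issue.

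\emph{Step 3 (conclusion).} Proposition~\ref{prop:compactness-rigidity} then yields regularity of the origin.

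The main obstacle is already absorbed in Corollary~\ref{cor:endpoint-energy}. The remaining care is to check two compatibility points. First, the slab energies genuinely dominate the ball quantities, including the time interval $I_n=(-2r_n^2,0)\supset(-r_n^2,0)$. Second, the pressure normalization used in Section~\ref{sec:endpoint} does not affect the suitable-solution hypothesis in Proposition~\ref{prop:compactness-rigidity}; this holds because subtracting the time-dependent average is harmless, as stated there.
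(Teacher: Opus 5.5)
Your proposal is correct and follows essentially the same route as the paper. The paper also converts the slab bound of Corollary~\ref{cor:endpoint-energy} into \eqref{eq:type-I-energy} using $B_r\subset S_n$, $(-r^2,0)\subset I_n$ and a dyadic factor $2$. It then obtains $(u^{(k)})^3\to0$ from Lemma~\ref{lem:homogeneous-reconstruction}, critical scaling and absolute continuity of the $L^p_tL^m_x$ norm, and concludes with Proposition~\ref{prop:compactness-rigidity}. Your explicit check that the Riesz-normalized pressure lies in $L^{3/2}(Q_1)$, which Lemma~\ref{lem:pressure-critical} requires, is a detail the paper leaves implicit.
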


\begin{proof}
Corollary~\ref{cor:endpoint-energy} gives \eqref{eq:endpoint-type-I}.  If $r_{n+1}<r\le r_n$, then $B_r\subset S_n$, $(-r^2,0)\subset I_n$, and $r^{-1}\le2r_n^{-1}$.  Hence
\[
 A(r)+E(r)\le2e_n,
\]
so \eqref{eq:type-I-energy} holds for some $r_*>0$.
The endpoint assumption and Lemma~\ref{lem:homogeneous-reconstruction} imply $u^3\in L^p(-2,0;L^m(\R^3))$.  Critical scaling gives
\begin{align}\label{eq:endpoint-component-vanishing}
 \norm[L^p(-1,0;L^m(B_1))]{(u^{(k)})^3}
 =\norm[L^p(-r_k^2,0;L^m(B_{r_k}))]{u^3}
 \longrightarrow0
\end{align}
by absolute continuity of the $L^p_tL^m_x$ norm.  Proposition~\ref{prop:compactness-rigidity} now applies.
\end{proof}

We now prove the global endpoint theorem, its critical-line consequence, and the continuation criterion.

\begin{proof}[Proof of Theorem~\ref{thm:main}]
Fix $z_0=(x_0,t_0)\in\R^3\times(0,T]$.  Choose a dyadic radius $r_0=2^{-N}$ such that $t_0-2r_0^2>0$, and define, for $-2<t<0$,
\begin{equation}\label{eq:endpoint-localized-scaling}
 v(x,t):=r_0u(x_0+r_0x,t_0+r_0^2t),
 \qquad
 \pi_v(x,t):=r_0^2\pi(x_0+r_0x,t_0+r_0^2t).
\end{equation}
The pair $(v,\pi_v)$ is a suitable weak solution in the global energy class on $\R^3\times(-2,0)$.  Translation commutes with the homogeneous blocks, while the dyadic dilation shifts their indices.  Since $1-2/p-3/m=0$,
\[
 \norm[\widetilde L^p(-2,0;\Bdot^0_{m,1})]{v^3}
 =\norm[\widetilde L^p(t_0-2r_0^2,t_0;\Bdot^0_{m,1})]{u^3}<\infty.
\]
Proposition~\ref{prop:endpoint-regularity} gives regularity of the normalized origin.  Reversing \eqref{eq:endpoint-localized-scaling} shows that $z_0$ is backward regular.  Since $z_0$ was arbitrary, the theorem follows.
\end{proof}

\begin{proof}[Proof of Corollary~\ref{cor:critical-line}]
Lemma~\ref{lem:critical-line-embedding} gives
\[
 \norm[\widetilde L^p(0,T;\Bdot^0_{m,1})]{u^3}
 \le c\norm[\widetilde L^p(0,T;\Bdot^s_{q,1})]{u^3}.
\]
The conclusion follows from Theorem~\ref{thm:main}.
\end{proof}

\begin{proof}[Proof of Corollary~\ref{cor:mild}]
Assume that $T^*<\infty$ and that the norm in \eqref{eq:blowup-norm} is finite for one admissible pair $(p,q)$.  By Lemma~\ref{lem:critical-line-embedding}, the endpoint norm $\norm[\widetilde L^p(0,T^*;\Bdot^0_{m,1})]{u^3}$ is finite.  \begin{revision}
We first verify directly that the mild solution belongs to the global energy class.  This avoids starting weak--strong uniqueness at a positive time before the two solution traces have been identified.  Fix $0<T<T^*$ and set
\[
 M_T:=\sup_{0\le t\le T}\|u(t)\|_{L^3}<\infty,
\]
which is finite because the maximal $L^3$-mild solution belongs to $C([0,T];L^3)$.  The mild formula and the heat-semigroup estimate
\[
 \bigl\|e^{(t-s)\Delta}\mathbb P\nabla\cdot F\bigr\|_{L^2}
 \le C(t-s)^{-3/4}\|F\|_{L^{3/2}}
\]
give, for $0<t\le T$,
\begin{align}
 \|u(t)-e^{t\Delta}u_0\|_{L^2}
 &\le C\int_0^t(t-s)^{-3/4}\|u(s)\otimes u(s)\|_{L^{3/2}}\,ds\notag\\
 &\le CM_T^2\int_0^t(t-s)^{-3/4}\,ds
 \le CM_T^2t^{1/4}.
 \label{eq:mild-L2-trace}
\end{align}
Since $e^{t\Delta}u_0\to u_0$ in $L^2$, it follows that
\[
 u(t)\longrightarrow u_0\quad\text{strongly in }L^2(\mathbb R^3)
 \qquad(t\downarrow0).
\]
For $0<\delta<t\le T$, restart the mild solution at time $\delta$.  Since $u(\delta)\in L^2\cap L^3$ and the positive-time solution is smooth, the standard whole-space energy equality (obtained, for instance, by the usual spatial-cutoff approximation) yields
\[
 \|u(t)\|_{L^2}^2
 +2\int_\delta^t\|\nabla u(s)\|_{L^2}^2\,ds
 =\|u(\delta)\|_{L^2}^2.
\]
Letting $\delta\downarrow0$ and using \eqref{eq:mild-L2-trace}, we obtain
\begin{equation}\label{eq:mild-energy-equality}
 \|u(t)\|_{L^2}^2
 +2\int_0^t\|\nabla u(s)\|_{L^2}^2\,ds
 =\|u_0\|_{L^2}^2,
 \qquad 0<t<T^*.
\end{equation}
Consequently,
\[
 u\in L^\infty(0,T^*;L^2(\mathbb R^3))
 \cap L^2(0,T^*;\dot H^1(\mathbb R^3)).
\]
Energy interpolation gives $u\in L^{10/3}(\mathbb R^3\times(0,T^*))$, and the whole-space pressure
\[
 \pi=\sum_{i,j=1}^3\mathcal R_i\mathcal R_j(u^iu^j)
\]
belongs to $L^{5/3}(\mathbb R^3\times(0,T^*))$.  Since the solution is smooth on every compact positive-time interval, it satisfies the local energy equality there; hence $(u,\pi)$ is a suitable weak solution on $\mathbb R^3\times(0,T^*)$.  Applying Theorem~\ref{thm:main} with terminal time $T^*$ shows that every point $(x_0,T^*)$ is backward regular.
\end{revision}

A spatially uniform terminal bound follows from energy interpolation:
\begin{equation}\label{eq:global-ten-thirds}
 u\in L^{10/3}(\R^3\times(0,T^*)),
 \qquad
 \pi=\sum_{i,j=1}^3\mathcal R_i\mathcal R_j(u^iu^j)
 \in L^{5/3}(\R^3\times(0,T^*)),
\end{equation}
where the pressure is taken in the whole-space normalization.  Therefore,
\begin{equation}\label{eq:space-tail}
 \int_0^{T^*}\int_{\{|x|>R\}}
 \bigl(|u|^{10/3}+|\pi|^{5/3}\bigr)\dd x\dd t
 \longrightarrow0
 \qquad (R\to\infty).
\end{equation}
Fix
\[
 0<\rho_*<\min\left\{1,\sqrt{T^*/2}\right\}.
\]
If $|x_0|>R+2\rho_*$, then $Q_{\rho_*}(x_0,T^*)$ is contained in the spatial tail in \eqref{eq:space-tail}.  H\"older's inequality and Jensen's inequality for the pressure average give
\begin{align}\label{eq:far-field-epsilon}
 &\rho_*^{-2}\int_{Q_{\rho_*}(x_0,T^*)}|u|^3\dd x\dd t
 \notag\\[-1mm]
 &\quad+\rho_*^{-2}\int_{Q_{\rho_*}(x_0,T^*)}
 \abs{\pi-(\pi)_{B_{\rho_*}(x_0)}(t)}^{3/2}\dd x\dd t
 \notag\\
 &\qquad\le c_{\rho_*}
 \left(
   \int_0^{T^*}\int_{\{|x|>R\}}|u|^{10/3}\dd x\dd t
 \right)^{9/10}
 \notag\\
 &\qquad\quad+c_{\rho_*}
 \left(
   \int_0^{T^*}\int_{\{|x|>R\}}|\pi|^{5/3}\dd x\dd t
 \right)^{9/10}.
\end{align}
Choose $R$ so large that the right-hand side is below the universal $\varepsilon$-regularity threshold in \cite{CKN1982,Lin1998}.  The scale-one quantitative conclusion of that criterion gives a constant $c_{\mathrm{far}}$, independent of $x_0$, such that
\begin{equation}\label{eq:far-field-bound}
 \norm[L^\infty(Q_{\rho_*/2}(x_0,T^*))]{u}
 \le c_{\mathrm{far}} \rho_*^{-1},
 \qquad |x_0|>R+2\rho_*.
\end{equation}

\begin{revision}
For each $x\in\overline{B_{R+3\rho_*}}$, backward regularity provides a radius $\rho_x>0$ and a number $M_x<\infty$ such that
\[
 \|u\|_{L^\infty(B_{\rho_x}(x)\times(T^*-\rho_x^2,T^*))}\le M_x.
\]
The balls $B_{\rho_x/2}(x)$ cover the compact set $\overline{B_{R+3\rho_*}}$.  Choose a finite subcover associated with $x_1,\ldots,x_N$ and set
\[
 \tau_0:=\frac14\min_{1\le i\le N}\rho_{x_i}^2,
 \qquad
 c_{\mathrm{near}}:=\max_{1\le i\le N}M_{x_i}.
\]
Then
\begin{equation}\label{eq:compact-terminal-bound}
 \|u\|_{L^\infty(B_{R+3\rho_*}\times(T^*-\tau_0,T^*))}
 \le c_{\mathrm{near}}.
\end{equation}
\end{revision}
Combining \eqref{eq:far-field-bound} and \eqref{eq:compact-terminal-bound}, and decreasing the terminal time interval if necessary, gives
\begin{equation}\label{eq:global-terminal-Linfty}
 u\in L^\infty\bigl(\R^3\times(T^*-\tau,T^*)\bigr)
 \quad\text{for some }\tau>0.
\end{equation}
For a related localization argument, compare the proof of \cite[Theorem~1.2]{ChaeWolf2021}.

The global energy inequality and \eqref{eq:global-terminal-Linfty} imply
\[
 \norm[L^3(\R^3)]{u(t)}^3
 \le\norm[L^\infty(\R^3)]{u(t)}\norm[L^2(\R^3)]{u(t)}^2
 \le c,
 \qquad T^*-\tau<t<T^*.
\]
On every earlier compact time interval, the mild solution is continuous in $L^3$.  Hence $u\in L^\infty(0,T^*;L^3)$, contradicting the $L^3$ blow-up alternative for maximal mild solutions; see \cite{Kato1984,ESS2003}.  Therefore \eqref{eq:blowup-norm} must hold.
\end{proof}

\medskip

\section*{Acknowledgement}
W. Wang was supported by National Key R\&D Program of China (No.2023YFA1009200) and NSFC under grant 12471219.  

{\bibliographystyle{plain}}
\bibliography{references}

\end{document}